\documentclass{amsart}
\usepackage{amsmath,amsfonts,amssymb,subfigure}
\usepackage{a4wide}
\usepackage{comment}

\usepackage{graphicx}
\usepackage{float}

\usepackage{enumerate}
\restylefloat{figure}

\newtheorem{lemma}{Lemma}[section]

\newtheorem{proposition}[lemma]{Proposition}
\newtheorem{theorem}[lemma]{Theorem}
\newtheorem{corollary}[lemma]{Corollary}
\theoremstyle{definition}
\newtheorem{definition}[lemma]{Definition}
\newtheorem{example}[lemma]{Example}

\theoremstyle{remark}
\newtheorem{remark}[lemma]{Remark}

\numberwithin{equation}{section} \numberwithin{table}{section}

\newcommand{\Mat}{\mathbf{M}_2(\mathbb{R})}
\newcommand{\Matd}{\mathbf{M}_d(\mathbb{R})}
\newcommand{\RR}{\mathbb{R}}
\newcommand{\vvv}{{|\!|\!|}}
\newcommand{\Lvvv}{\left|\!\left|\!\left|}
\newcommand{\Rvvv}{\right|\!\right|\!\right|}

\newcommand{\NN}{\mathbb{N}}

\newcommand{\supp}{\mathrm{supp}\,}

\newcommand{\tr}{\mathrm{tr}\,}

\begin{document}
\title[Counterexample to the finiteness conjecture]{An explicit counterexample to the Lagarias-Wang finiteness conjecture}

\dedicatory{To the memory of I.~M.~Gelfand}


\author{Kevin G. Hare}
\address{Department of Pure Mathematics, University of Waterloo, Waterloo, Ontario, Canada N2L 3G1.}
\email{kghare@uwaterloo.ca}
\thanks{Research of K. G. Hare supported, in part, by NSERC of Canada. Research of I. D. Morris supported by the EPSRC grant EP/E020801/1.}
\author{Ian D. Morris}
\address{Dipartimento di Matematica, Universit\`a di Roma Tor
Vergata, Via della Ricerca Scientifica, 00133 Roma, Italy.}
\email{morris@mat.uniroma2.it}
\author{Nikita Sidorov}
\address{School of Mathematics, University of Manchester, Oxford Road, Manchester M13 9PL, United Kingdom.}
\email{sidorov@manchester.ac.uk}
\author{Jacques Theys}
\address{BNP Paribas Fortis, 3, rue Montagne du Parc, B-1000 Bruxelles}
\email{jacques.theys@gmail.com}
\subjclass[2010]{Primary 15A18, 15A60; secondary 37B10, 65K10, 68R15}
\keywords{Joint spectral radius, finiteness conjecture, Sturmian sequence, balanced word, Fibonacci word, infinite product}
\date{\today}

\begin{abstract}
The joint spectral radius of a finite set of real $d \times d$ matrices is defined to be the maximum possible exponential rate of growth of long products of matrices drawn from that set. A set of matrices is said to have the \emph{finiteness property} if there exists a periodic product which achieves this maximal rate of growth. J.~C.~Lagarias and Y.~Wang conjectured in 1995 that every finite set of real $d \times d$ matrices satisfies the finiteness property. However, T.~Bousch and J.~Mairesse proved in 2002 that counterexamples to the finiteness conjecture exist, showing in particular that there exists a family of pairs of $2 \times 2$ matrices which contains a counterexample. Similar results were subsequently given by V.~D.~Blondel, J.~Theys and A.~A.~Vladimirov and by V.~S.~Kozyakin, but no explicit counterexample to the finiteness conjecture has so far been given. The purpose of this paper is to resolve this issue by giving the first completely explicit description of a counterexample to the
Lagarias-Wang finiteness conjecture. Namely, for the set
\[
\mathsf{A}_{\alpha_*}:=
\left\{\left(\begin{array}{cc}1&1\\0&1\end{array}\right),
\alpha_*\left(\begin{array}{cc}1&0\\1&1\end{array}\right)\right\}
\]
we give an explicit value of
\[\alpha_* \simeq 0.749326546330367557943961948091344672091327370236064317358024\ldots
\]
such that $\mathsf{A}_{\alpha_*}$ does not satisfy the finiteness property.
\end{abstract}

\maketitle

\section{Introduction}

If $A$ is a $d\times d$ real or complex matrix and $\|\cdot\|$ is a matrix norm, the spectral radius $\rho(A)$ of the matrix $A$ admits the well-known characterisation
\[\rho(A) = \lim_{n \to \infty}\|A^n\|^{1/n},\]
a result known as Gelfand's formula. The \emph{joint spectral radius} generalises this concept to sets of matrices. Given a finite set of $d \times d$ real matrices $\mathsf{A} = \{A_1,\ldots,A_r\}$, we by analogy define the joint spectral radius $\varrho(\mathsf{A})$ to be the quantity
\[\varrho(\mathsf{A}):=\limsup_{n \to \infty} \max\left\{\left\|A_{i_1}\cdots A_{i_n}\right\|^{1/n} \colon i_j \in \{1,\ldots,r\}\right\},\]
a definition introduced by G.-C.~Rota and G.~Strang in 1960 \cite{RS} (reprinted in \cite{Rotacoll}). Note that the pairwise equivalence of norms on finite dimensional spaces implies that the quantity $\varrho(\mathsf{A})$ is independent of the choice of norm used in the definition.

 The joint spectral radius has been found to arise naturally in a range of mathematical contexts including control and stability \cite{Ba,DHX,Gu,Koz}, coding theory \cite{MO}, the regularity of wavelets and other fractal structures \cite{DL,DL0,Mae2,Prot}, numerical solutions to ordinary differential equations \cite{GZ}, and combinatorics \cite{BCJ,DST}. As such the problem of accurately estimating the joint spectral radius of a given finite set of matrices is a topic of ongoing research interest \cite{BN,GWZ,Koz4,Koz5,QBWF,Parr,BT1,Wirth}.


In this paper we study a property related to the computation of the joint spectral radius of a set of matrices, termed the \emph{finiteness property}. A set of $d \times d$ real matrices $\mathsf{A}:=\{A_1,\ldots,A_r\}$ is said to satisfy the finiteness property if there exist integers $i_1,\ldots,i_n$ such that $\varrho(\mathsf{A}) = \rho(A_{i_1}\cdots A_{i_n})^{1/n}$. The \emph{finiteness conjecture} of J.~Lagarias and Y.~Wang \cite{LW} asserted that every finite set of $d \times d$ real matrices has the finiteness property; a conjecture equivalent to this statement was independently posed by L.~Gurvits  in \cite{Gu}, where it was attributed to E.~S.~Pyatnitski\u{\i}.
In special cases, this finiteness property is known to be true, see for example  \cite{Cicone, Jungers}.
The existence of counterexamples to the finiteness conjecture was established in 2002 by T. Bousch and J. Mairesse \cite{BM}, with alternative constructions subsequently being given by V.~Blondel, J.~Theys and A.~Vladimirov \cite{BTV} and V.~S.~Kozyakin \cite{Koz3}. However, in all three of these proofs it is shown only that a certain family of pairs of $2 \times 2$ matrices must contain a counterexample, and no explicit counterexample has yet been constructed. The problem of constructing an explicit counterexample has been remarked upon as difficult, with G.~Strang commenting that an explicit counterexample may never be established \cite{Rotacoll}. In this paper, we resolve this issue by giving the first completely explicit construction of a counterexample to the Lagarias-Wang finiteness conjecture.

Let us define a pair of $2 \times 2$ real matrices by
\[A_0:=\left(\begin{array}{cc}1&1\\0&1\end{array}\right), \qquad A_1:=\left(\begin{array}{cc}1&0\\1&1\end{array}\right),\]
and for each $\alpha \in [0,1]$ let us define $\mathsf{A}_\alpha:=\{A_0,\alpha A_1\}$. The construction of Blondel-Theys-Vladimirov \cite{BTV} shows that there exists $\alpha \in [0,1]$ for which $\mathsf{A}_\alpha$ does not satisfy the finiteness property. The proof operates indirectly by demonstrating that the set of all parameter values $\alpha$ for which the finiteness property \emph{does} hold is insufficient to cover the interval $[0,1]$. In this paper we extend \cite{BTV} substantially by describing the behaviour of $\varrho(\mathsf{A}_\alpha)$ as the parameter $\alpha$ is varied in a rather deep manner. This allows us to prove the following theorem:
\begin{theorem}\label{counter}
Let $(\tau_n)_{n=0}^\infty$ denote the sequence of integers defined by $\tau_0:=1$, $\tau_1,\tau_2:=2$, and $\tau_{n+1}:=\tau_n\tau_{n-1}-\tau_{n-2}$ for all $n \geq 2$,\footnote{This is the sequence A022405 from Sloane's On-Line Encyclopedia of Integer Sequences.} and let $(F_n)_{n=0}^\infty$ denote the sequence of Fibonacci numbers, defined by $F_0:=0$, $F_1:=1$ and $F_{n+1}:=F_n+F_{n-1}$ for all $n \geq 1$. Define a real number $\alpha_* \in (0,1]$ by
\begin{equation}\label{eq:alphastar}
\alpha_*:=\lim_{n \to \infty}
\left(\frac{\tau_n^{F_{n+1}}}{\tau_{n+1}^{F_n}}\right)^{(-1)^n}= \prod_{n=1}^\infty \left(1-\frac{\tau_{n-1}}{\tau_n \tau_{n+1}}\right)^{(-1)^n F_{n+1}}.
\end{equation}
Then this infinite product converges unconditionally, and $\mathsf{A}_{\alpha_*}$ does not have the finiteness property.
\end{theorem}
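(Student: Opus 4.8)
The proof occupies the remainder of the paper; we outline its structure here. The essential input, developed in the body, is a precise description of the function $\alpha\mapsto\varrho(\mathsf{A}_\alpha)$. Concretely, we attach to each $\alpha$ in the relevant parameter range an optimal ``rotation number'' $\omega(\alpha)\in[0,1]$ depending continuously and monotonically on $\alpha$, with the property that $\varrho(\mathsf{A}_\alpha)$ is attained by the product associated with the balanced (Sturmian) sequence of rotation number $\omega(\alpha)$, this being, up to the natural equivalence, the \emph{unique} optimal product. Granting this, if $\omega(\alpha)$ is irrational then the unique optimal product is aperiodic, so no periodic product attains $\varrho(\mathsf{A}_\alpha)$ and $\mathsf{A}_\alpha$ fails the finiteness property. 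It therefore suffices to exhibit a single parameter at which $\omega$ takes an irrational value, to compute that parameter explicitly, and to check that it is $\alpha_*$; we will find that $\omega(\alpha_*)$ is the golden mean $\gamma:=(\sqrt5-1)/2$.

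To locate this parameter we examine the mode-locking intervals attached to the continued-fraction convergents of $\gamma$. Put $W_1:=A_1$, $W_2:=A_0$ and $W_{n+1}:=W_nW_{n-1}$, so that $W_n\in\mathrm{SL}(2,\mathbb{Z})$ is a product of $F_n$ matrices from $\{A_0,A_1\}$, of which $F_{n-2}$ equal $A_1$; the corresponding product of matrices from $\mathsf{A}_\alpha$ is $\alpha^{F_{n-2}}W_n$, and it realises the rotation number $F_{n-1}/F_n$. Since $W_n=W_{n-1}W_{n-2}$, the matrix $W_nW_{n-1}^{-1}$ is conjugate to $W_{n-2}$, so the Fricke identity $\tr(XY)+\tr(XY^{-1})=(\tr X)(\tr Y)$ applied to $W_{n+1}=W_nW_{n-1}$ gives $\tr W_{n+1}=(\tr W_n)(\tr W_{n-1})-\tr W_{n-2}$; hence $\tr W_n=\tau_n$ for all $n$, and $\rho(W_n)=\tfrac12\bigl(\tau_n+\sqrt{\tau_n^2-4}\bigr)=\tau_n\bigl(1+O(\tau_n^{-2})\bigr)$. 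By the results of the body, $I_n:=\omega^{-1}(F_{n-1}/F_n)$ is a nonempty closed interval, and its endpoint $\beta_n$ nearest to $\gamma$ is the value of $\alpha$ at which $\alpha^{F_{n-2}}W_n$ and $\alpha^{F_{n-1}}W_{n+1}$ have equal per-symbol growth rate, i.e.\ $\bigl(\alpha^{F_{n-2}}\rho(W_n)\bigr)^{1/F_n}=\bigl(\alpha^{F_{n-1}}\rho(W_{n+1})\bigr)^{1/F_{n+1}}$. Taking logarithms and simplifying the coefficient of $\log\alpha$ by the Cassini identity $F_{n+1}F_{n-2}-F_nF_{n-1}=(-1)^{n-1}$ yields $\beta_n=\bigl(\rho(W_n)^{F_{n+1}}/\rho(W_{n+1})^{F_n}\bigr)^{(-1)^n}$; since $\tau_n$ grows doubly exponentially, the $O(\tau_n^{-2})$ corrections are annihilated by the Fibonacci-sized exponents in the limit, so $\lim_n\beta_n=\lim_n\bigl(\tau_n^{F_{n+1}}/\tau_{n+1}^{F_n}\bigr)^{(-1)^n}$.

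That this last limit exists and coincides with the infinite product is a computation with the recursion. Writing $\tau_{n+1}=\tau_n\tau_{n-1}\bigl(1-\tau_{n-2}/(\tau_n\tau_{n-1})\bigr)$ and using $F_{n+1}=F_n+F_{n-1}$, one telescopes $\log\bigl(\tau_n^{F_{n+1}}/\tau_{n+1}^{F_n}\bigr)^{(-1)^n}$, at each step peeling off a factor of the form $\bigl(1-\tau_{k-1}/(\tau_k\tau_{k+1})\bigr)^{(-1)^kF_{k+1}}$; the boundary term vanishes because $\tau_1=\tau_2$ and $F_1=F_2$, leaving exactly the series $\sum_{n\geq1}(-1)^nF_{n+1}\log\bigl(1-\tau_{n-1}/(\tau_n\tau_{n+1})\bigr)$. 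As $\log\tau_n$ grows like a constant times $\varphi^n$ with $\varphi$ the golden ratio, the quantities $\tau_{n-1}/(\tau_n\tau_{n+1})$ decay doubly exponentially while $F_{n+1}$ grows only exponentially, so the series converges absolutely; this yields the unconditional convergence of the product in \eqref{eq:alphastar} and its equality with $\lim_n\beta_n$, which we call $\alpha_*$. Finally, the convergents $F_{n-1}/F_n$ approach $\gamma$ alternately from below and above, so by monotonicity of $\omega$ the intervals $I_n$ lie alternately to the left and to the right of the fibre $\omega^{-1}(\gamma)$, and their $\gamma$-facing endpoints $\beta_n$ converge to the common point $\alpha_*$; continuity of $\omega$ then forces $\omega(\alpha_*)=\gamma$, which is irrational, and the first paragraph gives the conclusion.

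The routine bookkeeping above conceals where the real difficulty lies, namely in the input it invokes from the body of the paper: establishing the existence, monotonicity and continuity of the rotation-number map $\omega$, and — the genuinely delicate point — proving that the balanced optimal product is unique, equivalently that the mode-locking set over an irrational rotation number degenerates to a single point. A secondary technical burden is the quantitative control needed to justify passing between $\tau_n$ and $\rho(W_n)$ and to interchange the limit with the various exponentials, which is precisely what the doubly-exponential growth of $(\tau_n)$ makes possible.
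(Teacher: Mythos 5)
Your overall approach matches the paper's: both use Sturmian/balanced words, a concave exponential growth-rate function along such words, a monotone continuous ``optimal ratio'' map $\alpha\mapsto\mathfrak{r}(\alpha)$, the Fibonacci substitution and its convergents, the Fricke trace identity to show $\tr W_n=\tau_n$, and the Cassini identity to simplify the slope, culminating in the same telescoping infinite product. The difference is in how you identify $\alpha_*$ as a preimage of an irrational ratio, and there is a genuine gap.

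You assert that the endpoint $\beta_n$ of the mode-locking interval $I_n=\omega^{-1}(F_{n-1}/F_n)$ nearest to $\gamma$ is the value of $\alpha$ at which the per-symbol growth rates of the two consecutive Fibonacci products tie, i.e.\ $\bigl(\alpha^{F_{n-2}}\rho(W_n)\bigr)^{1/F_n}=\bigl(\alpha^{F_{n-1}}\rho(W_{n+1})\bigr)^{1/F_{n+1}}$. This is not correct. The $\gamma$-facing endpoint of $I_n$ is $e^{-\eta}$ where $\eta$ is the relevant one-sided derivative of the concave function $S$ at $F_{n-1}/F_n$; by strict concavity of $S$ (which the paper proves), this one-sided derivative is \emph{strictly} larger in absolute value than the chord slope from $F_{n-1}/F_n$ to the next convergent, so your $\beta_n$ lies strictly outside $I_n$, on the $\gamma$ side. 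Consequently the picture of alternating intervals with $\beta_n$ sandwiching $\omega^{-1}(\gamma)$ is not available, and the conclusion $\omega(\alpha_*)=\gamma$ cannot be read off by continuity and squeezing as you propose. You do flag that the real difficulty is showing $\omega^{-1}(\gamma)$ is a single point, but you offer no route to it.

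The paper closes exactly this gap. It shows (Lemma~\ref{deriv}) that $\mathfrak r^{-1}(\gamma)=\{\alpha_0\}$ if and only if $S$ is differentiable at $\gamma$ with $S'(\gamma)=-\log\alpha_0$, and then (Lemma~\ref{FIM}) proves differentiability of $S$ at any irrational $\gamma$ that is \emph{not Liouville}. The argument there uses a quantitative lower bound for the joint spectral radius from \cite{QBWF} together with a Diophantine estimate: if $\mathfrak r^{-1}(\gamma)$ had nonempty interior one could perturb $\alpha$ multiplicatively and obtain a contradiction because good rational approximants $p/q$ to $\gamma$ cannot be too good. Since $\phi^{-2}$ is algebraic, it is not Liouville, so $S$ is differentiable there, and the difference quotient along consecutive convergents $F_{n-2}/F_n$, $F_{n-1}/F_{n+1}$ (which straddle $\phi^{-2}$) converges to $S'(\phi^{-2})$. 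That limit is then evaluated via $\tau_n$ exactly as you do. So your value $\alpha_*=\lim\beta_n$ is right, and the telescoping computation is right; but the bridge from ``$\beta_n\to\alpha_*$'' to ``$\mathfrak r(\alpha_*)$ is irrational'' needs the Diophantine/differentiability argument, not the mode-locking-endpoint claim.

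A side remark: you can in fact avoid the Liouville step for the \emph{forward} direction you need. Since $\beta_n=\exp(-\text{slope of a chord straddling }\phi^{-2})$ and chord slopes of a concave function straddling a point always lie between the two one-sided derivatives there, convergence of $\beta_n$ (which you get from the absolutely convergent infinite product) forces $-\log\alpha_*$ to be a subgradient of $S$ at $\phi^{-2}$, hence $\mathfrak r(\alpha_*)=\phi^{-2}$ by the subgradient characterisation in Lemma~\ref{deriv}'s proof. This gives a slightly leaner argument than the paper's; but it is a different argument from the one you wrote, and it requires stating and using the subgradient characterisation rather than the false endpoint identification.
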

The convergence in both of the limits given in Theorem~\ref{counter} is extremely rapid, being of order $O\left(\exp(-\delta \phi^n)\right)$ where $\delta>0$ is some constant and $\phi$ is the golden ratio. An explicit error bound is given subsequently to the proof of Theorem~\ref{counter}. Using this bound we may compute the approximation
\[\alpha_* \simeq 0.749326546330367557943961948091344672091327370236064317358024\ldots\]
which is rigorously accurate to all decimal places shown.

We shall now briefly describe the technical results which underlie the proof of Theorem~\ref{counter}. For each $\alpha \in [0,1]$ let us write $A_0^{(\alpha)}:=A_0$ and $A_1^{(\alpha)}:=\alpha A_1$ so that $\mathsf{A}_\alpha=\left\{A_0^{(\alpha)},A_1^{(\alpha)}\right\}$. The principal technical question which is addressed in this paper is the following: if we are given that for some finite sequence of values $u_1,\ldots,u_n \in \{0,1\}$, the matrix
\begin{equation}
\label{introproduct}A^{(\alpha)}_{u_n}A^{(\alpha)}_{u_{n-1}}\cdots A^{(\alpha)}_{u_2}A^{(\alpha)}_{u_1}
\end{equation}
is ``large'' in some suitable sense -- for example, if its spectral radius is close to the value $\varrho(\mathsf{A}_\alpha)^n$ - then what may we deduce about the combinatorial structure of the sequence of values $u_i$, and how does this answer change as the parameter $\alpha$ is varied? A key technical step in the proof of Theorem~\ref{counter}, therefore, is to show that the magnitude of the product \eqref{introproduct} is maximised when the sequence $u_1,u_2,\ldots,u_n$ is a \emph{balanced word}. This result depends in a rather essential manner on several otherwise unpublished results from the fourth named author's 2005 PhD thesis \cite{Theys}, which are substantially strengthened in the present paper.

In the following section we shall introduce the combinatorial ideas needed to describe balanced words. We are then able to state our main technical theorem, describe its relationship to previous research in ergodic theory and the theory of the joint spectral radius, and give a brief overview of how Theorem~\ref{counter} is subsequently deduced. The detailed structure of this paper is described at the end of the following section.

\section{Notation and statement of technical results}

Throughout this paper we denote the set of all $d \times d$ real matrices by $\Matd$. The symbol $\vvv\cdot\vvv$ will be used to denote the norm on $\Matd$ which is induced by the Euclidean norm on $\RR^d$, which satisfies $\vvv B\vvv=\rho(B^*B)^{1/2}$ for every $B \in  \Matd$. Other norms shall be denoted using the symbol $\| \cdot \|$. We shall say that a norm $\|\cdot\|$ on $\Mat$ is \emph{submultiplicative} if $\|AB\| \leq \|A\|\cdot\|B\|$ for all $A, B \in \Mat$. For the remainder of this paper we shall also denote $\varrho(\mathsf{A}_\alpha)$ simply by $\varrho(\alpha)$.

For the purposes of this paper we define a \emph{finite word}, or simply \emph{word} to be sequence $u=(u_i)$ belonging to $\{0,1\}^n$ for some integer $n \geq 0$. We will typically use $u$, $v$ or $w$ to represent finite words.  If $u \in \{0,1\}^n$ then we say that $u$ is length $n$, which we denote by $|u| = n$.  If $|u|$ is zero then the word $u$ is called \emph{empty}. The number of terms of $u$ which are equal to $1$ is denoted by $|u|_1$. If $u$ is nonempty, the quantity $|u|_1/|u|$ is called the \emph{$1$-ratio} of $u$ and is written $\varsigma(u)$. The two possible words of length one shall often be denoted simply by $0$ and $1$. We denote the set of all finite words by $\Omega$.

We will define an \emph{infinite word} to be a sequence $x = (x_i)$ belonging to $\{0,1\}^\NN$.  We will typically use $x$, $y$ or $z$ to represent infinite words.  If the word can be either finite or infinite, we will typically use $\omega$.  We denote the set of all infinite words by $\Sigma$, and define a metric $d$ on $\Sigma$ as follows.  Given $x,y \in \Sigma$ with $x=(x_i)_{i=1}^\infty$ and $y=(y_i)_{i=1}^\infty$, define $\mathfrak{n}(x,y):=\inf \{i \geq 1 \colon x_i \neq y_i\}$. We now define $d(x,y):=1/2^{\mathfrak{n}(x,y)}$ for all $x,y \in \Sigma$, where we interpret the symbol $1/2^\infty$ as being equal to zero. The topology on $\Sigma$ which is generated by the metric $d$ coincides with the infinite product topology on $\Sigma=\{0,1\}^{\mathbb{N}}$. In particular $\Sigma$ is compact and totally disconnected. For any nonempty finite word $u=(u_i)_{i=1}^n$ the set $\{x \in \Sigma \colon x_i=u_i \text{ for all }1 \leq i \leq n\}$ is both closed and open. Since every open ball in $\Sigma$ has this form for some $u$, the collection of all such sets generates the topology of $\Sigma$.

We define the \emph{shift transformation} $T \colon \Sigma \to \Sigma$ by $T[(x_i)_{i=1}^\infty]:=(x_{i+1})_{i=1}^\infty$. The shift transformation is continuous and surjective.  We define the projection $\pi_n : \Sigma \to \Omega$ by $\pi_n[(x_i)_{i=1}^\infty] =  (x_i)_{i=1}^n$.

If $u = u_1 u_2 \dots u_n$ and $v = v_1 v_2 \dots v_m$ are finite words, then we define the \emph{concatenation} of $u$ with $v$  as $u v = u_1 u_2 \dots u_n v_1 v_2 \dots v_m$, the finite word of length~$n + m$.  Note that if $u$ is the empty word then $uv=vu=v$ for every word $v$.  The set $\Omega$ endowed with the operation of concatenation is a semigroup.

Given a word $u$ and positive integer $n$ we let $u^n$ denote the linear concatenation of $n$ copies of $u$, so that for example $u^4:=uuuu$. If $u$ is a nonempty word of length $n$, we let $u^\infty$ denote the unique infinite word $x \in \Sigma$ such that $x_{kn+i}=u_i$ for all integers $i,k$ with $k \geq 1$ and $1 \leq i \leq n$. Clearly any infinite word $x \in \Sigma$ satisfies $T^nx=x$ for an integer $n \geq 1$ if and only if there exists a word $u$ such that $x=u^\infty$ and $|u|$ divides $n$.

If $u$ is a nonempty word, and $\omega$ is either a finite or infinite word, we say that $u$ is a \emph{subword} of $\omega$ if there exists an integer $k \geq 0$ such that $u_i=\omega_{k+i}$ for all integers $i$ in the range $1 \leq i \leq |u|$. We denote this relationship by $u \prec \omega$. Clearly $u \prec \omega$ if and only if there exist a possibly empty word $v \in \Omega$ and a finite or infinite word $\omega'$ such that $\omega = au\omega'$.  An infinite word $x$ is said to be \emph{recurrent} if every finite subword $u \prec x$ occurs as a subword of $x$ an infinite number of times.  A finite or infinite word $\omega$ is called \emph{balanced} if for every pair of finite subwords $u,v$ such that $u,v \prec \omega$ and $|u|=|v|$, we necessarily have $||u|_1-|v|_1| \leq 1$. Clearly $\omega$ is balanced if and only if every subword of $\omega$ is balanced. An infinite balanced word which is not eventually periodic is called \emph{Sturmian}.

The following standard result describes the principal properties of balanced infinite words which will be applied in this paper:

\begin{theorem}\label{Xphi}
If $x \in \Sigma$ is balanced then the limit $\varsigma(x):= \lim_{n \to \infty} \varsigma(\pi_n(x))$ exists. For each $\gamma \in [0,1]$, let $X_\gamma$ denote the set of all recurrent balanced infinite words $x \in \Sigma$ for which $\varsigma(x)=\gamma$. These sets have the following properties:
\begin{enumerate}[(i).]
\item
Each $X_\gamma$ is compact and nonempty.
\item
For each $\gamma \in [0,1]$, the restriction of $T$ to $X_\gamma$ is a continuous, minimal, uniquely ergodic transformation of $X_\gamma$. If $\mu$ is the unique ergodic probability measure supported in $X_\gamma$, then $\mu(\{x \colon x_1=1\})=\gamma$.
\item
If $\gamma = p/q \in [0,1] \cap \mathbb{Q}$ in lowest terms then the cardinality of $X_\gamma$ is $q$, and for each $x \in X_\gamma$ we have $X_\gamma = \{x,Tx,\ldots,T^{q-1}x\}$. If $\gamma \in [0,1] \setminus \mathbb{Q}$ then $X_\gamma$ is uncountably infinite.
\end{enumerate}
\end{theorem}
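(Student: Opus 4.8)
The plan is to reduce Theorem~\ref{Xphi} to the classical structure theory of balanced words --- specifically, to the description of recurrent balanced words as codings of circle rotations, as developed in standard references such as Lothaire's \emph{Algebraic Combinatorics on Words} or the survey of N.~Pytheas Fogg --- with only the existence of the density $\varsigma(x)$ requiring a direct argument. For a balanced $x$ put $f(n):=|\pi_n(x)|_1$, so that $\varsigma(\pi_n(x))=f(n)/n$. Applying the balance hypothesis to the two length-$m$ windows of $x$ occupying positions $1,\dots,m$ and $n+1,\dots,n+m$ gives $|f(n+m)-f(n)-f(m)|\le 1$ for all $m,n\ge 1$, so $n\mapsto f(n)+1$ is subadditive and $n\mapsto f(n)-1$ is superadditive. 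By Fekete's lemma both sequences $(f(n)\pm 1)/n$ converge, necessarily to a common limit $\gamma:=\varsigma(x)\in[0,1]$; the same argument yields the discrepancy bound $|f(n)-\gamma n|\le 1$ for all $n$, which will be used below.

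Next, fix $\gamma\in[0,1]$ and let $R_\gamma$ denote the rotation of $\RR/\mathbb{Z}$ by $\gamma$. Coding the orbit of $t\in\RR/\mathbb{Z}$ by the partition of $\RR/\mathbb{Z}$ into the arcs $[0,1-\gamma)$ and $[1-\gamma,1)$ produces a \emph{mechanical word} $c(t)\in\Sigma$, given by $c(t)_n=\lfloor n\gamma+t\rfloor-\lfloor(n-1)\gamma+t\rfloor$ for $n\ge 1$; every such word is balanced with density $\gamma$, and the closure $Y_\gamma$ of the set $\{c(t):t\in\RR/\mathbb{Z}\}$ (which also contains the words obtained from the left-continuous version of the partition) is a closed, $T$-invariant subset of $\Sigma$. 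The substantive classical input is the converse: every recurrent balanced word of density $\gamma$ is such a mechanical word, so that $X_\gamma=Y_\gamma$. Granting this, $X_\gamma$ is nonempty (it contains $c(0)$) and compact: it is closed in $\Sigma$ because the balance condition is closed, because the discrepancy bound forces the value $\gamma$ of the density to be preserved under limits, and because recurrence is automatic on $Y_\gamma$. This establishes (i).

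The remaining assertions now follow from the dynamics of $R_\gamma$ under the identification $X_\gamma=Y_\gamma$. If $\gamma=p/q$ in lowest terms, then $R_\gamma$ consists of a single periodic orbit of cardinality $q$, so $Y_\gamma$ is the orbit $\{w^\infty,Tw^\infty,\dots,T^{q-1}w^\infty\}$ of the Christoffel word $w$ of slope $p/q$, a primitive word of length $q$ containing exactly $p$ ones; hence $|X_\gamma|=q$ and $X_\gamma=\{x,Tx,\dots,T^{q-1}x\}$ for every $x\in X_\gamma$, the restriction of $T$ is a $q$-cycle --- thus continuous, minimal and uniquely ergodic with invariant measure $\mu$ uniform on the $q$ points --- and, since exactly $p$ of the $q$ cyclic conjugates of $w$ begin with $1$, one gets $\mu(\{x:x_1=1\})=p/q=\gamma$. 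This gives (iii) and (ii) for rational $\gamma$. If $\gamma$ is irrational, $R_\gamma$ is minimal and uniquely ergodic with invariant measure Lebesgue; the coding map $t\mapsto c(t)$ is a semiconjugacy onto a dense subset of $Y_\gamma$ that is continuous and injective off a countable set, and this transfers minimality and unique ergodicity to $T|_{X_\gamma}$ and shows $X_\gamma$ is uncountable, completing (iii). Finally $\{x\in X_\gamma:x_1=1\}$ pulls back under the coding to the arc $[1-\gamma,1)$ of Lebesgue measure $\gamma$, so $\mu(\{x:x_1=1\})=\gamma$, completing (ii); continuity of $T|_{X_\gamma}$ in both cases is immediate since $T$ is already continuous on $\Sigma$.

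The main obstacle is precisely the classical fact invoked in the second paragraph: that a recurrent balanced word of density $\gamma$ must be a mechanical word $c(t)$ --- equivalently, that recurrent balanced words are uniformly recurrent and their orbit closures are exactly the Sturmian subshifts when $\gamma\notin\mathbb{Q}$ and the Christoffel periodic orbits when $\gamma\in\mathbb{Q}$. All the combinatorial substance of Theorem~\ref{Xphi} is concentrated here; the rest is bookkeeping about circle rotations. I would either cite this directly or argue along the standard lines: an aperiodic balanced word has factor complexity exactly $n+1$ with each factor recurring with bounded gap, so its orbit closure is a minimal subshift that one identifies with a coding of $R_\gamma$ (and a balanced word of irrational density is necessarily aperiodic); a balanced word of rational density is eventually periodic, hence, being recurrent, periodic, and a periodic balanced word of density $p/q$ is a power of a cyclic conjugate of the Christoffel word of slope $p/q$.
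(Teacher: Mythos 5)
Your proof follows essentially the same route as the paper: both reduce to the classical characterisation of recurrent balanced words as codings of circle rotations (Lothaire, Lemmas 2.1.14 and 2.1.15) and then transfer the dynamical properties of the rotation $z\mapsto z+\gamma$ on $\mathbb{R}/\mathbb{Z}$ to the subshift $X_\gamma$. Your Fekete-lemma argument for the existence of the density $\varsigma(x)$ and the discrepancy bound $|f(n)-\gamma n|\le 1$ is a clean self-contained version of what the paper takes directly from Lothaire Prop. 2.1.10, and the remainder correctly fleshes out the sketch the paper offers in the remark following the theorem statement.
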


\begin{example}
We have $X_{2/5}=\{(00101)^\infty, (01010)^\infty, (10100)^\infty, (01001)^\infty, (10010)^\infty\}$.
\end{example}

Theorem~\ref{Xphi} does not appear to exist in the literature in the precise form given above, but it may be established without difficulty by combining various results from the second chapter of \cite{Lot}. The key step in obtaining Theorem~\ref{Xphi} is to show that $x \in X_\gamma$ if and only if there exists $\delta \in [0,1)$ such that either $x_n \equiv \lfloor (n+1)\gamma+\delta \rfloor  - \lfloor n\gamma + \delta\rfloor$, or $x_n \equiv \lceil (n+1)\gamma+\delta \rceil  - \lceil n\gamma + \delta\rceil$, see Lemmas~2.1.14 and 2.1.15 of \cite{Lot}. Once this identification has been made, the dynamical properties of $X_\gamma$ under the shift transformation largely follow from the properties of the rotation map $z \mapsto z+\gamma$ defined on $\mathbb{R}/\mathbb{Z}$.

Given a nonempty finite word $u=(u_i)^n_{i=1}$ and real number $\alpha \in [0,1]$, we put
\[\mathcal{A}^{(\alpha)}(u):= A^{(\alpha)}_{u_n}A^{(\alpha)}_{u_{n-1}}\cdots A^{(\alpha)}_{u_2}A^{(\alpha)}_{u_1}\]
and
\[\mathcal{A}(u):=A_{u_n}A_{u_{n-1}}\cdots A_{u_2}A_{u_1}=\mathcal{A}^{(1)}(u).\]
For every $x \in \Sigma$, $\alpha \in [0,1]$ and $n \geq 1$ we also define
\[\mathcal{A}^{(\alpha)}(x,n):=\mathcal{A}^{(\alpha)}(\pi_n(x)),\qquad \mathcal{A}(x,n):=\mathcal{A}(\pi_n(x))=\mathcal{A}^{(1)}(x,n).\]
Note that the function $\mathcal{A}(x,n)$ satisfies the cocycle relationship
\[\mathcal{A}(x,n+m) = \mathcal{A}(T^nx,m)\mathcal{A}(x,n)\]
for every $x \in \Sigma$, $n,m \geq 1$.

Our main task in proving Theorem~\ref{counter} is to characterise those infinite words $x \in \Sigma$ for which $\mathcal{A}(x,n)$ grows rapidly in terms of the sets $X_\gamma$. To do this we must be able to specify what is meant by rapid growth. Let us therefore say that an infinite word $x \in \Sigma$ is a \emph{strongly extremal} word for $\mathsf{A}_\alpha$ if there is a constant $\delta>0$ such that $\vvv\mathcal{A}^{(\alpha)}(x,n)\vvv \geq \delta \varrho(\alpha)^n$ for all $n\geq 1$, and \emph{weakly extremal} for $\mathsf{A}_\alpha$ if $\lim_{n\to\infty} \vvv\mathcal{A}^{(\alpha)}(x,n)\vvv^{1/n}=\varrho(\alpha)$. It is obvious that every strongly extremal word is also weakly extremal. Note also that since all norms on $\Mat$ are equivalent, these definitions are unaffected if another norm $\|\cdot\|$ is substituted for $\vvv\cdot \vvv$. We shall say that $\mathfrak{r} \in [0,1]$ is the \emph{unique optimal $1$-ratio} of $\mathsf{A}_\alpha$ if for every $x \in \Sigma$ which is weakly extremal for $\mathsf{A}_\alpha$ we have $\varsigma(\pi_n(x)) \to \mathfrak{r}$. Note that the existence of a unique optimal $1$-ratio is a nontrivial property, and is shown in Theorem~\ref{technical}.  For example, if $\mathsf{A}\subset \Mat$ is a pair of isometries then no unique optimal $1$-ratio for $\mathsf{A}$ exists.  It is not difficult to see that if $\mathsf{A}_\alpha$ has a unique optimal $1$-ratio which is irrational, then $\mathsf{A}_\alpha$ cannot satisfy the finiteness property, and it is this principle which underlies the present work as well as the work of Bousch-Mairesse \cite{BM} and Kozyakin \cite{Koz3}.

The principal technical result of this paper is the following theorem which allows us to relate all of the concepts defined so far in this section:
\begin{theorem}\label{technical}
There exists a continuous, non-decreasing surjection $\mathfrak{r} \colon [0,1] \to [0,\frac{1}{2}]$ such that for each $\alpha$, $\mathfrak{r}(\alpha)$ is the unique optimal $1$-ratio of $\mathsf{A}_\alpha$. For each $\alpha \in [0,1]$, every element of $X_{\mathfrak{r}(\alpha)}$ is strongly extremal for $\mathsf{A}_\alpha$. Moreover, for every compact set $K \subset (0,1]$ there exists a constant $C_K>1$ such that

\begin{equation}\label{niceformula}
C_K^{-1} \leq  \frac{\rho\left(\mathcal{A}^{(\alpha)}(x,n)\right)}{\varrho(\alpha)^n} \leq \frac{\Lvvv\mathcal{A}^{(\alpha)}(x,n)\Rvvv}{\varrho(\alpha)^n} \leq C_K
\end{equation}
whenever $\alpha \in K$, $x \in X_{\mathfrak{r}(\alpha)}$ and $n \geq 1$. Conversely, if $x \in \Sigma$ is a recurrent infinite word which is strongly extremal for $\mathsf{A}_\alpha$ then $x \in X_{\mathfrak{r}(\alpha)}$, and if $x \in \Sigma$ is any infinite word which is weakly extremal for $\mathsf{A}_\alpha$ then $(1/n)\sum_{k=0}^{n-1}\mathrm{dist}(T^kx,X_{\mathfrak{r}(\alpha)}) \to 0$.
\end{theorem}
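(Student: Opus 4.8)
\emph{Proof strategy.} The plan is to reduce the whole theorem to a single combinatorial input, to be extracted from and strengthened well beyond \cite{Theys}: that for every fixed length $n$ the spectral radius $\rho(\mathcal{A}^{(\alpha)}(u))$, maximised over words $u$ of length $n$ with a prescribed $1$-ratio, is attained up to a universal multiplicative constant by the balanced words among them, and -- crucially for the converse directions -- that a recurrent infinite word containing an \emph{unbalanced} subword incurs a definite exponential loss in $\vvv\mathcal{A}^{(\alpha)}(\cdot,n)\vvv$ relative to $\varrho(\alpha)^n$. I expect the quantitative, $\alpha$-uniform form of this statement (valid for $\alpha$ in compact subsets of $(0,1]$) to be the main obstacle. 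It should rest on the block decomposition $u=0^{a_0}1\,0^{a_1}1\cdots1\,0^{a_k}$, under which $\mathcal{A}(u)$ becomes a product of the matrices $A_0^{a_i}$ and $A_1$ whose spectral radius is governed by the periodic continued fraction $[\overline{a_0,a_1,\dots,a_k}]$; maximising over the $a_i$ with $k$ and $\sum_i a_i$ fixed is then a convexity problem whose solution is exactly the most even partition, which is the balanced condition.

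Granting this input, I would first record the scaling identity $\rho(\mathcal{A}^{(\alpha)}(u))=\alpha^{|u|_1}\rho(\mathcal{A}(u))$, and for rational $\gamma=p/q\in[0,1]$ define $\beta(\alpha,\gamma):=\rho(\mathcal{A}^{(\alpha)}(u))^{1/q}$ for $u$ a period of an element of $X_\gamma$ (well defined, as any two such periods are cyclic permutations of one another). The input extends $\beta(\cdot,\gamma)$ continuously to all $\gamma\in[0,1]$ and gives $\varrho(\alpha)=\max_{\gamma\in[0,1]}\beta(\alpha,\gamma)$. Writing $t=\log\alpha$ and $g(t):=\log\varrho(e^t)=\max_\gamma\bigl(\gamma t+\log\beta(1,\gamma)\bigr)$, the function $g$ is a supremum of affine functions and hence convex; the additional input that $\gamma\mapsto\log\beta(1,\gamma)$ is \emph{strictly} concave makes the maximiser unique for each $\alpha$, so $g$ is differentiable with $g'(t)=\mathfrak{r}(e^t)$ by the envelope theorem. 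Convexity of $g$ then shows $\mathfrak{r}$ is non-decreasing, differentiability of $g$ shows $\mathfrak{r}$ is continuous, the symmetry $\beta(1,\gamma)=\beta(1,1-\gamma)$ (from the order-two conjugation of $\Mat$ that swaps $A_0\leftrightarrow A_1$) together with $\gamma\mapsto\alpha^\gamma$ being decreasing for $\alpha\le1$ forces $\mathfrak{r}(\alpha)\le\tfrac12$, and the boundary values $\mathfrak{r}(0)=0$, $\mathfrak{r}(1)=\tfrac12$ give surjectivity onto $[0,\tfrac12]$. A separate short argument shows $\mathfrak{r}(\alpha)>0$ for every $\alpha>0$, using that $\beta(\alpha,1/q)>1$ once $q$ is large.

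Next I would prove strong extremality of the elements of $X_{\mathfrak{r}(\alpha)}$ and the estimate \eqref{niceformula}. The middle inequality there is automatic since $\rho(B)\le\vvv B\vvv$. For the outer inequalities I would use that whenever a word $u$ is neither $0^{|u|}$ nor $1^{|u|}$ the product $\mathcal{A}(u)$ is strictly positive (because $A_0A_1$ and $A_1A_0$ are strictly positive and $A_0,A_1$ have no zero row or column), so that for $\alpha\in(0,1]$ and $x\in X_{\mathfrak{r}(\alpha)}$ -- which contains both symbols since $\mathfrak{r}(\alpha)>0$ -- the matrices $\mathcal{A}^{(\alpha)}(x,n)$ become strictly positive after boundedly many steps; a Perron--Frobenius / Birkhoff-contraction argument then shows that $\rho(\mathcal{A}^{(\alpha)}(x,n))$ and $\vvv\mathcal{A}^{(\alpha)}(x,n)\vvv$ agree up to a bounded factor. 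When $\mathfrak{r}(\alpha)=p/q$ is rational, $X_{\mathfrak{r}(\alpha)}$ is finite and each element is $q$-periodic with $\rho(\mathcal{A}^{(\alpha)}(x,q))=\varrho(\alpha)^q$, so iterating the period gives two-sided bounds; when $\mathfrak{r}(\alpha)$ is irrational the same follows from unique ergodicity of $T|_{X_{\mathfrak{r}(\alpha)}}$, which via the uniform subadditive ergodic theorem gives $n^{-1}\log\vvv\mathcal{A}^{(\alpha)}(x,n)\vvv\to\log\varrho(\alpha)$ uniformly in $x$, combined with the positivity just noted. The quantitative form of the combinatorial input is what prevents $C_K$ from degenerating as $\mathfrak{r}(\alpha)$ ranges over irrational values, and a compactness argument over $\alpha\in K$ yields the single constant $C_K$.

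Finally, for the converses I would argue ergodic-theoretically. If $x$ is weakly extremal for $\mathsf{A}_\alpha$, any weak-$*$ limit $\mu$ of the empirical measures $n^{-1}\sum_{k=0}^{n-1}\delta_{T^kx}$ is a shift-invariant probability measure with $\lim_n n^{-1}\int\log\vvv\mathcal{A}^{(\alpha)}(\cdot,n)\vvv\,d\mu=\log\varrho(\alpha)$; by the strict optimality of balanced words (in the averaged, measure-theoretic form, which follows from the combinatorial input) every ergodic component of $\mu$ must be the unique invariant measure carried by $X_{\mathfrak{r}(\alpha)}$, so that $\varsigma(\pi_n(x))\to\mathfrak{r}(\alpha)$ -- proving that $\mathfrak{r}(\alpha)$ is the unique optimal $1$-ratio of $\mathsf{A}_\alpha$ -- and, the limiting measure now being unique, the empirical measures converge to it, which is exactly the assertion $n^{-1}\sum_{k=0}^{n-1}\mathrm{dist}(T^kx,X_{\mathfrak{r}(\alpha)})\to0$. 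If $x$ is in addition recurrent and \emph{strongly} extremal and some subword $w\prec x$ were unbalanced, then by recurrence $w$ would occur in $x$ infinitely often and, by the quantitative input, the ratio $\vvv\mathcal{A}^{(\alpha)}(x,n)\vvv/\varrho(\alpha)^n$ would tend to $0$ along a subsequence, contradicting the lower bound $\delta\,\varrho(\alpha)^n$; hence $x$ is recurrent, balanced, and has $\varsigma(x)=\mathfrak{r}(\alpha)$, that is, $x\in X_{\mathfrak{r}(\alpha)}$. The ergodic-theoretic packaging here is routine; the crux throughout is the quantitative optimality, and strict sub-optimality, of balanced versus non-balanced words, strengthened from \cite{Theys}.
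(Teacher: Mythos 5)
Your overall architecture is close to the paper's: you set up $S(\gamma)=\log\beta(1,\gamma)$, observe $\varrho(\alpha)=\max_\gamma e^{S(\gamma)}\alpha^\gamma$, and want to read $\mathfrak{r}$ off a Legendre-type duality; your ergodic arguments for the two converse statements mirror the paper's Lemmas~\ref{strongex} and \ref{weakex} almost exactly, and your plan for \eqref{niceformula} via positivity of blocks and Birkhoff contraction is a reasonable substitute for the paper's Lemma~\ref{rho-norm}. But there is a genuine circularity at the heart of your uniqueness step. You write that ``strict concavity of $\gamma\mapsto\log\beta(1,\gamma)$ makes the maximiser unique,'' yet strict concavity of $S$ is \emph{not} available at this point: the paper proves only non-strict concavity beforehand (Lemmas~\ref{Sconcave}, \ref{Smain}), and strict concavity appears only as a corollary \emph{after} Theorem~\ref{technical}, with a proof that uses precisely the well-definedness of $\mathfrak{r}$ you are trying to establish. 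The paper settles uniqueness of the maximiser by an entirely different tool: Lemma~\ref{varsi}, based on Blondel--Theys--Vladimirov's result that any two finite words $u,v$ with $\rho(\mathcal{A}^{(\alpha)}(u))^{1/|u|}=\rho(\mathcal{A}^{(\alpha)}(v))^{1/|v|}=\varrho(\alpha)$ must be ``essentially equal'' and therefore have the same $1$-ratio; this is applied to rational $\tilde\gamma_1<\tilde\gamma_2$ squeezed between two hypothetical maximisers, using ordinary (non-strict) concavity to show both are still maximisers. Your outline has no substitute for this step, so the whole Legendre picture (convexity of $g$, differentiability, $g'(t)=\mathfrak{r}(e^t)$) is not yet grounded.

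A secondary issue: your ``continued-fraction convexity / most even partition'' heuristic is weaker than what the combinatorial input actually requires. Evening out the block exponents $a_i$ at the first level is not the same as balancedness; a balanced word of $1$-ratio $p/q$ has exponents in $\{\lfloor q/p\rfloor,\lceil q/p\rceil\}$ whose sequence of occurrences is itself balanced, recursively. The paper's Lemma~\ref{finite-balanced-best} does not run a convexity argument on the $a_i$; it uses the suboptimal-triple machinery (Lemmas~\ref{theys2} and \ref{sot2}), showing that an unbalanced word contains a subword $awb$ for which reversing $w$ strictly increases the trace at fixed length and $1$-ratio, and Proposition~\ref{balanced1} iterates this reversal quantitatively inside a recurrent word. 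If you want to pursue the continued-fraction route you will have to handle the fact that $\tr\mathcal{A}(u)$ is not a symmetric or separately convex function of the $a_i$, and that the extremiser involves the full Sturmian nesting, not just evenness of the partition.
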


\begin{remark} The definition of a strongly extremal infinite word is similar to the one previously proposed by V.~ S.~Kozyakin \cite{Koz2007}, whereas the definition of a weakly extremal infinite word is similar to a definition used previously by the fourth named author \cite{Theys}. In both instances the infinite word is simply referred to as `extremal'.
\end{remark}

\begin{remark} Note that balanced/Sturmian words (and measures) arise as optimal trajectories in various optimisation problems -- see, e.g., \cite{Bousch,BS,HO,Jenk1}.
\end{remark}

\begin{remark} A less general version of parts of Theorem \ref{technical}
    was proved in \cite{Theys}.
\end{remark}

The structure of the paper is as follows: Sections~\ref{sec3} and \ref{sec4} deal with important preliminaries, such as general properties of joint spectral radius and of balanced words. In Section~\ref{sec5} we show that every strongly extremal infinite word is balanced. In Section~\ref{section6} we introduce an important auxiliary function $S$ defined as the logarithm of the exponent growth of the norm of an arbitrary matrix product taken along balanced words with a fixed 1-ratio. In Section~\ref{sec7} we apply results from preceding sections to prove Theorem~\ref{technical}. Finally, in Section~\ref{sec8} we deduce Theorem~\ref{counter} from Theorem~\ref{technical}. Section~\ref{sec9} contains some open questions and conjectures.

We believe it is worth describing here briefly how Theorem~\ref{technical} leads to Theorem~\ref{counter}. Once we have established the existence of such a function $\mathfrak r$, we may take any irrational $\gamma$ and conclude that any element $\alpha$ of the preimage $\mathfrak r^{-1}(\gamma)$ is a counterexample to the finiteness conjecture (since any weakly extremal word must be aperiodic).

To construct a specific counterexample, we take $\gamma=\frac{3-\sqrt5}2$ and choose the \emph{Fibonacci word} $u_\infty$ as a strongly extremal word for this 1-ratio. Recall that $u_\infty=\lim_n u_{(n)}$, where $u_{(1)}=1, u_{(2)}=0$ and $u_{(n+1)}=u_{(n)}u_{(n-1)}$ for $n\ge2$. Now consider the morphism $h:\Omega\to\Mat$ such that $h(0)=A_0, h(1)=A_1$. Denote $B_n:=h(u_{(n)})$; we thus have $B_{n+1}=B_n B_{n-1}$. One can easily show that $\tr(B_n)=\tau_n$, the sequence described in Theorem~\ref{counter}. To obtain explicit formulae for $\alpha_*$, we show that the auxillary function $S$ introducted in Section \ref{section6} is differentiable at $\gamma = \frac{3-\sqrt{5}}{2}$ and that $-\log\alpha_*=S'\left(\frac{3-\sqrt5}2\right)$. We then compute this derivative, which will yield (\ref{eq:alphastar}).

\section{General properties of the joint spectral radius and extremal infinite words}\label{sec3}

We shall begin with some general results concerning the joint spectral radius. The following characterisation of the joint spectral radius will prove useful on a number of occasions:
\begin{lemma}\label{bowf}
Let $\alpha \in [0,1]$ and let $\|\cdot\|$ be any submultiplicative matrix norm. Then:
\[\varrho(\alpha) = \inf_{n \geq 1}\max\left\{\left\|\mathcal{A}(x,n)\right\|^{1/n}\colon x \in \Sigma\right\} =\sup_{n \geq 1}\max\left\{\rho\left(\mathcal{A}(x,n)\right)^{1/n}\colon x \in \Sigma\right\}.\]
\end{lemma}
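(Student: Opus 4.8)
The plan is to establish the two claimed equalities by a chain of inequalities, exploiting submultiplicativity for the ``$\inf$'' side and Gelfand's formula together with a limiting argument for the ``$\sup$'' side. First I would recall that, since all norms on $\Mat$ are equivalent, the quantity $\varrho(\alpha)$ may be computed using the fixed submultiplicative norm $\|\cdot\|$; so from the definition $\varrho(\alpha)=\limsup_{n\to\infty}\max_x\|\mathcal{A}(x,n)\|^{1/n}$. For the first equality, submultiplicativity gives, for all $x\in\Sigma$ and $n,m\geq 1$, the bound $\|\mathcal{A}(x,n+m)\|\leq \|\mathcal{A}(T^nx,m)\|\cdot\|\mathcal{A}(x,n)\|$ via the cocycle relation, so the sequence $a_n:=\log\max_{x}\|\mathcal{A}(x,n)\|$ is subadditive. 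Fekete's subadditive lemma then yields $\lim_{n\to\infty}a_n/n=\inf_{n\geq 1}a_n/n$, and since the $\limsup$ defining $\varrho(\alpha)$ is in fact a genuine limit by this argument, we obtain $\varrho(\alpha)=\inf_{n\geq 1}\max_x\|\mathcal{A}(x,n)\|^{1/n}$.

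For the second equality, the easy direction is $\sup_{n\geq 1}\max_x\rho(\mathcal{A}(x,n))^{1/n}\leq\varrho(\alpha)$: indeed $\rho(B)\leq\|B^k\|^{1/k}$ for every $k$ and every submultiplicative norm, and applying this to $B=\mathcal{A}(x,n)$ with $x$ replaced by the periodic word $(\pi_n x)^\infty$ shows $\rho(\mathcal{A}(x,n))^{1/n}\leq\|\mathcal{A}((\pi_nx)^\infty,nk)\|^{1/(nk)}$ for all $k$, whose infimum over $k$ (or limit as $k\to\infty$, by Gelfand's formula) is at most $\varrho(\alpha)$ by the first equality. For the reverse inequality I would argue as follows: fix $\varepsilon>0$; by the first equality there is arbitrarily large $n$ and a word $x$ with $\|\mathcal{A}(x,n)\|^{1/n}\geq\varrho(\alpha)-\varepsilon$. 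The subtlety is that a large \emph{norm} of a single product does not immediately give a large \emph{spectral radius} of some product. To bridge this gap I would pass to products that are long enough to ``see'' the growth: for the periodic word $y=(\pi_nx)^\infty$, Gelfand's formula gives $\rho(\mathcal{A}(x,n))=\lim_{k\to\infty}\|\mathcal{A}(y,nk)\|^{1/k}$, which by itself is not obviously large. Instead, the standard device is to use that $\varrho$ is also the $\limsup$ of $\rho$-averages over all products of a given length; concretely, one invokes the classical Berger--Wang identity, $\varrho(\mathsf{A})=\limsup_{n\to\infty}\max_x\rho(\mathcal{A}(x,n))^{1/n}$, whose proof proceeds by choosing, for each large $n$, a product of norm close to $\varrho(\alpha)^n$, and then either that product already has spectral radius close to $\varrho(\alpha)^n$, or one concatenates a bounded correction word to turn a large norm into a large spectral radius while changing the length only by a bounded amount. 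Combining the Berger--Wang $\limsup$ with the monotonicity observation $\rho(\mathcal{A}(x,nk))\geq\rho(\mathcal{A}(x,n))^k$ (for $x$ periodic of period $n$) upgrades the $\limsup$ to the $\sup$, giving $\varrho(\alpha)\leq\sup_{n\geq1}\max_x\rho(\mathcal{A}(x,n))^{1/n}$.

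The main obstacle is precisely this last step: converting the ``norm is large'' statement into a ``spectral radius is large'' statement, i.e. proving (or citing) the Berger--Wang identity. Everything else is a routine application of Fekete's lemma, Gelfand's formula, and submultiplicativity. In the write-up I would most likely simply cite the Berger--Wang theorem and the Rota--Strang/Fekete characterisation as standard, so that the proof of Lemma \ref{bowf} reduces to assembling these known facts together with the cocycle relation already recorded above; the only genuinely new content is the observation that for this particular family the two extremal characterisations coincide with the stated $\inf$ and $\sup$ rather than merely with $\limsup$/$\liminf$ expressions.
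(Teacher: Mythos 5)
Your proposal is correct and follows essentially the same route as the paper: Fekete's subadditivity lemma to convert the $\limsup$ of norms into an $\inf$ (and a genuine limit), a citation of the Berger--Wang theorem to pass from norms to spectral radii, and the power trick $\rho(\mathcal{A}(x,nk))\geq\rho(\mathcal{A}(x,n))^k$ along periodic words to upgrade the resulting $\limsup$ of spectral radii to a $\sup$. The only cosmetic difference is that you spell out the easy inequality $\sup\leq\varrho(\alpha)$ via $\rho(B)\leq\|B^k\|^{1/k}$, whereas the paper lets Berger--Wang absorb both directions; the substance is identical.
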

\begin{proof}
We review some arguments from \cite{BW,DL}. Fix $\alpha \in [0,1]$ and a matrix norm $\|\cdot\|$, and define
\[\varrho_n^+(\alpha,\|\cdot\|):= \max\left\{\left\|A^{(\alpha)}_{i_1}\cdots A^{(\alpha)}_{i_n}\right\|\colon (i_1,\ldots,i_n)\in\{0,1\}^n\right\} = \max \left\{\left\|A^{(\alpha)}(x,n)\right\| \colon x \in \Sigma\right\}\]
and
\[\varrho_n^-(\alpha):=\max\left\{\rho\left(A^{(\alpha)}_{i_1}\cdots A^{(\alpha)}_{i_n}\right)\colon i_1,\ldots,i_n\in\{0,1\}\right\} = \max \left\{\rho\left(A^{(\alpha)}(x,n) \right)\colon x \in \Sigma\right\}.\]
Clearly each $\varrho_n^+(\alpha,\|\cdot\|)$ is nonzero, and $\varrho_{n+m}^+(\alpha,\|\cdot\|) \leq \varrho_{n}^+(\alpha,\|\cdot\|)\varrho_{m}^+(\alpha,\|\cdot\|)$ for every $n, m \geq 1$. Applying Fekete's subadditivity lemma \cite{Feck} to the sequence $\log \varrho_n^+(\alpha,\|\cdot\|)$ we obtain
\[\lim_{n \to \infty} \varrho^+_n(\alpha,\|\cdot\|)^{1/n} = \inf_{n \geq 1} \varrho^+_n(\alpha,\|\cdot\|)^{1/n}.\]
In particular the limit superior in the definition of $\varrho(\alpha)$ is in fact a limit. A well-known result of Berger and Wang \cite{BW} implies that
\[\lim_{n \to \infty} \varrho^+_n(\alpha,\|\cdot\|)^{1/n} = \limsup_{n \to \infty} \varrho^-_n(\alpha)^{1/n},\]
which in particular implies that the value $\varrho(\alpha)$ is independent of the choice of norm $\|\cdot\|$. Finally, note that if $\rho\left(A_{i_1}^{(\alpha)}\cdots A_{i_n}^{(\alpha)}\right)=\varrho^-_n(\alpha)$ for some $n$, then $\varrho_{nm}^-(\alpha) \geq \rho((A_{i_1}\cdots A_{i_n})^m) = \varrho_n^-(\alpha)^m$ for each $m \geq 1$, and hence the limit superior above is also a supremum.
\end{proof}
We may immediately deduce the following corollary, which was originally noted by C.~Heil and G.~Strang \cite{HS}:
\begin{lemma}\label{rhocts}
The function $\varrho \colon [0,1] \to\mathbb{R}$ is continuous.
\end{lemma}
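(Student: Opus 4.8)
The plan is to deduce continuity from the two-sided characterisation of $\varrho$ established in Lemma~\ref{bowf}, exploiting the fact that it exhibits $\varrho(\alpha)$ simultaneously as an infimum and as a supremum of a family of continuous functions of $\alpha$. Recall from the proof of Lemma~\ref{bowf} the quantities $\varrho_n^+(\alpha,\|\cdot\|)=\max\{\|\mathcal{A}^{(\alpha)}(u)\|\colon u\in\{0,1\}^n\}$ and $\varrho_n^-(\alpha)=\max\{\rho(\mathcal{A}^{(\alpha)}(u))\colon u\in\{0,1\}^n\}$, which satisfy $\varrho(\alpha)=\inf_{n\geq1}\varrho_n^+(\alpha,\|\cdot\|)^{1/n}=\sup_{n\geq1}\varrho_n^-(\alpha)^{1/n}$.

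First I would observe that for each fixed nonempty word $u$ the map $\alpha\mapsto\mathcal{A}^{(\alpha)}(u)$ is continuous: writing $k=|u|_1$, we have $\mathcal{A}^{(\alpha)}(u)=\alpha^{k}\mathcal{A}(u)$, so in fact each entry is a monomial in $\alpha$. Since the matrix norm $\|\cdot\|$ and the spectral radius $\rho$ are continuous functions on $\Mat$, and $t\mapsto t^{1/n}$ is continuous on $[0,\infty)$, the functions $\alpha\mapsto\|\mathcal{A}^{(\alpha)}(u)\|^{1/n}$ and $\alpha\mapsto\rho(\mathcal{A}^{(\alpha)}(u))^{1/n}$ are continuous on $[0,1]$. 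A maximum of finitely many continuous functions is continuous, so for every $n\geq1$ the functions $\alpha\mapsto\varrho_n^+(\alpha,\|\cdot\|)^{1/n}$ and $\alpha\mapsto\varrho_n^-(\alpha)^{1/n}$ are continuous on $[0,1]$.

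Next I would invoke the standard semicontinuity facts: a pointwise infimum of a family of upper semicontinuous functions is upper semicontinuous, and a pointwise supremum of a family of lower semicontinuous functions is lower semicontinuous. Applying the first to $\varrho(\alpha)=\inf_{n}\varrho_n^+(\alpha,\|\cdot\|)^{1/n}$ shows $\varrho$ is upper semicontinuous on $[0,1]$; applying the second to $\varrho(\alpha)=\sup_{n}\varrho_n^-(\alpha)^{1/n}$ shows $\varrho$ is lower semicontinuous on $[0,1]$. A function that is both upper and lower semicontinuous is continuous, which completes the argument.

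There is no real obstacle here; the proof is routine once Lemma~\ref{bowf} is in hand, and the only point worth emphasising is that a one-sided formula (inf \emph{or} sup alone) would yield only one of the two semicontinuity directions, so both halves of Lemma~\ref{bowf} are genuinely used. (Alternatively, one could give a quantitative argument: for $0\leq\alpha\leq\beta\leq1$ the homogeneity relation $\mathcal{A}^{(\alpha)}(u)=\alpha^{|u|_1}\mathcal{A}(u)$ gives $(\alpha/\beta)^{n}\|\mathcal{A}^{(\beta)}(u)\|\leq\|\mathcal{A}^{(\alpha)}(u)\|\leq\|\mathcal{A}^{(\beta)}(u)\|$ for every word $u$ of length $n$, whence $(\alpha/\beta)\varrho(\beta)\leq\varrho(\alpha)\leq\varrho(\beta)$; this recovers both monotonicity and continuity directly, but the semicontinuity argument is cleaner to state.)
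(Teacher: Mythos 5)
Your proof is correct and is essentially the same argument the paper gives: both exploit the two-sided inf/sup characterisation from Lemma~\ref{bowf} to obtain upper and lower semicontinuity and hence continuity. The extra detail you supply on why each $\alpha\mapsto\varrho_n^{\pm}(\alpha)^{1/n}$ is continuous, and the quantitative aside at the end, are welcome but not a different route.
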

\begin{proof}
The first of the two identities given in Lemma~\ref{bowf} shows that $\varrho$ is equal to the pointwise infimum of a family of continuous functions, and hence is upper semi-continuous. The second identity shows that $\varrho$ also equals the pointwise supremum of a family of continuous functions, and hence is lower semi-continuous.
\end{proof}

\begin{lemma}\label{extnorm}
For each $\alpha\in(0,1]$ there exists a matrix norm $\|\cdot\|_\alpha$ such that $\left\|A_i^{(\alpha)}\right\|_\alpha \leq \varrho(\alpha)$ for $i=0,1$. The matrix norms $\|\cdot\|_\alpha$ may be chosen so that the following additional property is satisfied: for every compact set $K \subset (0,1]$ there exists a constant $M_K>1$ such that $M_K^{-1}\|B\|_\alpha \leq \vvv B\vvv \leq M_K \|B\|_\alpha$ for all $B \in \Mat$ and all $\alpha \in K$.
\end{lemma}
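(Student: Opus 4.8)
The plan is to build an extremal norm by the standard Barabanov/Rota–Strang construction, rescaling so that the joint spectral radius becomes $1$, and then to track the comparison constants uniformly in the parameter. Fix $\alpha \in (0,1]$ and write $\varrho=\varrho(\alpha)>0$ (positivity holds because $A_0^{(\alpha)}=A_0$ alone already has spectral radius $1$, so $\varrho(\alpha)\ge 1$). Consider the rescaled pair $\widehat{A}_i:=\varrho^{-1}A_i^{(\alpha)}$, which has joint spectral radius $1$. By Lemma~\ref{bowf}, for any submultiplicative norm the quantities $\varrho_n^+$ of the rescaled system satisfy $\sup_n \varrho_n^+(\,\cdot\,)^{1/n}=1$, hence the set $\mathcal{S}_\alpha:=\{\widehat{A}_{i_1}\cdots\widehat{A}_{i_n}: n\ge 0,\ i_j\in\{0,1\}\}$ (including the identity) is a bounded semigroup in $\Mat$. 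One then defines
\[
\|B\|_\alpha:=\sup_{S\in\mathcal{S}_\alpha}\vvv BS\vvv.
\]
This supremum is finite by boundedness of $\mathcal{S}_\alpha$, and it is routine to check that $\|\cdot\|_\alpha$ is a norm (homogeneity and the triangle inequality are immediate; non-degeneracy follows by taking $S=I$, which gives $\|B\|_\alpha\ge\vvv B\vvv$). It is submultiplicative because $\|AB\|_\alpha=\sup_S\vvv ABS\vvv\le\sup_{S'}\vvv AS'\vvv\cdot(\text{\dots})$ — more precisely $\vvv ABS\vvv\le\|A\|_\alpha$ whenever $BS\in\mathcal{S}_\alpha\cup\mathcal{S}_\alpha\cdot\{B\}$; the clean statement is that $S\mapsto BS$ maps $\mathcal{S}_\alpha$ into itself when $B\in\mathcal{S}_\alpha$, and in general $\|AB\|_\alpha\le\|A\|_\alpha\|B\|_\alpha/\vvv I\vvv$ is not what we want, so instead one argues $\|\widehat{A}_i B\|_\alpha=\sup_S\vvv\widehat{A}_iBS\vvv\le\sup_{S'\in\mathcal{S}_\alpha}\vvv BS'\vvv=\|B\|_\alpha$ directly, using $\widehat{A}_iBS\in\mathcal{S}_\alpha$-translates; this yields $\|\widehat{A}_i\|_\alpha\le 1$, i.e.\ $\|A_i^{(\alpha)}\|_\alpha\le\varrho(\alpha)$, which is the first assertion.

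For the second, uniform, assertion, fix a compact set $K\subset(0,1]$. The lower bound $\vvv B\vvv\le\|B\|_\alpha$ is already uniform (take $S=I$), so it suffices to bound $\|B\|_\alpha$ above by a uniform multiple of $\vvv B\vvv$, equivalently to bound $\sup_{S\in\mathcal{S}_\alpha}\vvv S\vvv$ uniformly for $\alpha\in K$. Here the key input is the first identity in Lemma~\ref{bowf}: for every $\alpha$ there is some $n(\alpha)$ with $\max_{x}\vvv\mathcal{A}^{(\alpha)}(x,n(\alpha))\vvv^{1/n(\alpha)}\le\varrho(\alpha)\cdot e^{\epsilon}$ for any prescribed $\epsilon$, but to get a genuinely uniform bound on the whole infinite semigroup one wants a Berger–Wang-type estimate with controlled constants. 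The cleanest route is: by Lemma~\ref{rhocts}, $\varrho$ is continuous, hence bounded below by some $c_K>0$ and above on $K$; then on the rescaled system $\widehat{A}_i^{(\alpha)}$ the joint spectral radius is identically $1$, and the family $\{\mathcal{A}^{(\alpha)}(x,n):x\in\Sigma,\alpha\in K, n\ge 1\}$ rescaled by $\varrho(\alpha)^n$ is uniformly bounded — this is exactly the content one proves from the finite-word characterisation together with compactness of $K$ and joint continuity of $(\alpha,\text{word})\mapsto\varrho(\alpha)^{-n}\mathcal{A}^{(\alpha)}(x,n)$ in $\alpha$. Concretely, by upper semicontinuity of $\alpha\mapsto\varrho_n^+(\alpha,\vvv\cdot\vvv)^{1/n}$ and its decrease to $\varrho(\alpha)$, a Dini-type argument on the compact set $K$ gives an $N$ and a constant $C_K$ with $\varrho_N^+(\alpha,\vvv\cdot\vvv)\le (C_K\varrho(\alpha))^{N}$ — wait, more carefully, one gets $\varrho_N^+(\alpha,\vvv\cdot\vvv)^{1/N}\le 2\varrho(\alpha)$, say, uniformly; submultiplicativity then propagates this to all lengths with a fixed multiplicative loss coming only from the first $N$ matrices, yielding $\sup_{S\in\mathcal{S}_\alpha}\vvv S\vvv\le M_K$ for a constant $M_K$ depending only on $K$.

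Putting these together gives, for $\alpha\in K$ and all $B\in\Mat$, the two-sided estimate $\vvv B\vvv\le\|B\|_\alpha=\sup_S\vvv BS\vvv\le\vvv B\vvv\cdot\sup_S\vvv S\vvv\le M_K\vvv B\vvv$, which is the stated inequality (after possibly replacing $M_K$ by $\max(M_K,1)$ to ensure $M_K>1$). The main obstacle is the uniformity in step three: one must upgrade the pointwise-in-$\alpha$ boundedness of the rescaled semigroup to a bound uniform over the compact parameter set $K$. I expect this to follow from a compactness/Dini argument applied to the decreasing sequence of continuous functions $\alpha\mapsto\varrho_n^+(\alpha,\vvv\cdot\vvv)^{1/n}$ converging pointwise to the continuous function $\varrho$, but the subtle point is that Dini's theorem gives uniform convergence only for monotone sequences converging to a \emph{continuous} limit — which is precisely guaranteed by Lemma~\ref{rhocts} — after which submultiplicativity handles the tail of the semigroup with a loss absorbed into $M_K$.
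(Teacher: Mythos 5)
Your general strategy — rescale by $\varrho(\alpha)$, use the bounded semigroup to build an extremal norm, then control the comparison constant uniformly on $K$ — is the right circle of ideas, and it is essentially the paper's route. However, there are genuine gaps at the two load-bearing steps.

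First, the assertion that the rescaled semigroup $\mathcal{S}_\alpha$ is bounded is not a consequence of Lemma~\ref{bowf}. Lemma~\ref{bowf} says $\varrho(\alpha)=\inf_n\max_x\|\mathcal{A}(x,n)\|^{1/n}$ (an infimum, not a supremum as you wrote), which after rescaling gives only $\inf_n\varrho_n^+(\cdot)^{1/n}=1$; it does \emph{not} produce an inequality of the form $\varrho_n^+(\cdot)\leq C$ for all $n$. Joint spectral radius $1$ by itself does not imply a bounded semigroup — the singleton $\{\left(\begin{smallmatrix}1&1\\0&1\end{smallmatrix}\right)\}$ has spectral radius $1$ but unbounded powers. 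What forces boundedness here is \emph{irreducibility} of $\mathsf{A}_\alpha$ for $\alpha\in(0,1]$, via Barabanov's theorem; this is the key input the paper uses and your argument does not supply. (The irreducibility is easy: $A_0$ and $\alpha A_1$ have no common eigenvector when $\alpha>0$.)

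Second, the uniformity over the compact $K$ does not follow from the Dini-type argument you sketch. Dini gives uniform convergence of the decreasing continuous functions $\alpha\mapsto\varrho_n^+(\alpha,\vvv\cdot\vvv)^{1/n}$ to the continuous limit $\varrho$, i.e.\ for given $\varepsilon>0$ there is an $N$ with $\varrho_N^+(\alpha)^{1/N}\leq(1+\varepsilon)\varrho(\alpha)$ for all $\alpha\in K$. Propagating this to length $n=qN+r$ by submultiplicativity gives a bound of the form $(1+\varepsilon)^{qN}\varrho(\alpha)^{n}\cdot C^r$, which blows up exponentially in $n$ and does \emph{not} yield a uniform bound $\sup_n\varrho(\alpha)^{-n}\varrho_n^+(\alpha)\leq M_K$. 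In other words, uniform convergence of the $n$-th roots is strictly weaker than uniform boundedness of the rescaled semigroup. The paper obtains the required uniformity from results of Wirth and Kozyakin, which assert continuous dependence of the Barabanov (extremal-norm) constants on the matrix family within the class of irreducible families; that is the step your proof would need to cite or reprove.

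Finally, a more minor technical slip: your norm is defined directly on matrices as $\|B\|_\alpha=\sup_{S\in\mathcal{S}_\alpha}\vvv BS\vvv$, and the argument you give for $\|\widehat A_iB\|_\alpha\leq\|B\|_\alpha$ puts $\widehat A_i$ on the wrong side of the product: since the semigroup acts by right multiplication, one only gets $\|B\widehat A_i\|_\alpha\leq\|B\|_\alpha$, hence $\|\widehat A_i\|_\alpha\leq\|I\|_\alpha$, which need not be $\leq 1$. The paper avoids this by defining the norm on \emph{vectors}, $\|v\|_\alpha:=\sup_{n\geq 0}\max\{\varrho(\alpha)^{-n}\vvv B_{i_1}\cdots B_{i_n}v\vvv\}$, and passing to the induced operator norm on $\Mat$; operator norms are automatically submultiplicative with $\|I\|_\alpha=1$, which is what makes $\|A_i^{(\alpha)}\|_\alpha\leq\varrho(\alpha)$ come out cleanly.
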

\begin{proof}
Let $\mathsf{B}=\{B_1,\ldots,B_r\}$ be any finite set of $d \times d$ real matrices and let $\varrho(\mathsf{B})$ be its joint spectral radius. We say that $\mathsf{B}$ is \emph{irreducible} if the only linear subspaces $V \subseteq\RR^d$ such that $B_iV \subseteq V$ for every $i$ are $\{0\}$ and $\RR^d$. A classic theorem of N.~E.~Barabanov \cite{Ba} shows that if $\mathsf{B}$ is irreducible then there exists a constant $M_{\mathsf{B}}>1$ such that for each $n \geq 1$,
\[
\max\{\|B_{i_1}\dots B_{i_n}\| \colon i_j \in \{1,\ldots,r\}\} \leq M_{\mathsf{B}}\varrho(\mathsf{B})^n.
\]
Note in particular that necessarily $\varrho(\mathsf{B})>0$. It is then straightforward to see that if we define for each $v \in \mathbb{R}^d$
\[
\|v\|_{\mathsf{B}}:= \sup_{n \geq 0}\left\{\varrho(\mathsf{B})^{-n}\max \vvv B_{i_1}\cdots B_{i_n}v\vvv \colon i_j \in \{1,\ldots,r\}\right\},
\]
where $\vvv\cdot\vvv$ denotes the Euclidean norm, then $\|\cdot\|_{\mathsf{B}}$ is a norm on $\mathbb{R}^d$ which satisfies $\|B_i v\|_{\mathsf{B}} \leq \varrho(\mathsf{B})\|v\|_{\mathsf{B}}$ for every $i \in \{1,\ldots,r\}$ and $v \in \mathbb{R}^d$.  It follows that the operator norm on $\Mat$ induced by $\|\cdot\|_{\mathsf{B}}$ has the property $\|B_i\|_{\mathsf{B}} \leq \varrho(\mathsf{B})$ for each $B_i$.  More recent results due to F.~Wirth \cite[Thm. 4.1]{Wirth} and V.~S.~Kozyakin \cite{Kozmore} show that the constants $M_{\mathsf{B}}$ may be chosen so as to depend continuously on the set of matrices $\mathsf{B}$, subject to the condition that the perturbed matrix families also do not have invariant subspaces. It is easily shown that $\mathsf{A}_\alpha$ is irreducible for every $\alpha \in (0,1]$ and so the lemma  follows from these general results.
\end{proof}

We immediately obtain the following:
\begin{lemma}\label{gtr1}
For each $\alpha \in (0,1]$ we have $\varrho(\alpha)>1$.
\end{lemma}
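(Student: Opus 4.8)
The goal is to show $\varrho(\alpha) > 1$ for every $\alpha \in (0,1]$. The plan is to exploit Lemma~\ref{bowf}, which gives $\varrho(\alpha) = \sup_{n \geq 1}\max\{\rho(\mathcal{A}^{(\alpha)}(x,n))^{1/n} \colon x \in \Sigma\}$, so it suffices to exhibit a single finite word $u$ for which $\rho(\mathcal{A}^{(\alpha)}(u)) > 1$. The natural candidate is the word $u = 01$ (or equivalently $10$), since the product $A_1 A_0 = \left(\begin{smallmatrix}1&1\\1&2\end{smallmatrix}\right)$ is a well-known hyperbolic matrix: its eigenvalues are $(3\pm\sqrt5)/2$, so $\rho(A_1 A_0) = (3+\sqrt5)/2 = \phi^2 > 1$ where $\phi$ is the golden ratio.

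Concretely, I would compute $\mathcal{A}^{(\alpha)}(01) = (\alpha A_1) A_0 = \alpha \left(\begin{smallmatrix}1&1\\1&2\end{smallmatrix}\right)$, whose spectral radius is $\alpha (3+\sqrt5)/2$. This is already $>1$ precisely when $\alpha > (3-\sqrt5)/2 \simeq 0.382$, which does not cover the whole interval $(0,1]$, so for small $\alpha$ one must work harder. The fix is to use a longer word that contains only one copy of the letter $1$: for the word $u = 0^{k}1$ of length $k+1$ we have $\mathcal{A}^{(\alpha)}(0^k1) = \alpha A_1 A_0^{k} = \alpha\left(\begin{smallmatrix}1 & k\\ 1 & k+1\end{smallmatrix}\right)$, which has trace $\alpha(k+2)$ and determinant $\alpha^2$, hence spectral radius $\tfrac{\alpha}{2}\left((k+2) + \sqrt{(k+2)^2 - 4}\right)$. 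For fixed $\alpha>0$ this tends to $+\infty$ as $k \to \infty$ (it grows roughly like $\alpha(k+1)$), so we can choose $k$ large enough that $\rho(\mathcal{A}^{(\alpha)}(0^k1)) > 1$, whence $\varrho(\alpha) \geq \rho(\mathcal{A}^{(\alpha)}(0^k1))^{1/(k+1)} > 1$ by Lemma~\ref{bowf}.

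Alternatively, and perhaps more cleanly, one may invoke Lemma~\ref{extnorm}: it supplies a submultiplicative norm $\|\cdot\|_\alpha$ with $\|A_i^{(\alpha)}\|_\alpha \leq \varrho(\alpha)$, and in particular $\varrho(\alpha) \geq \|\mathcal{A}^{(\alpha)}(u)\|_\alpha^{1/|u|} \geq \rho(\mathcal{A}^{(\alpha)}(u))^{1/|u|}$ for any word $u$; but this is just Lemma~\ref{bowf} again. A second route avoiding any explicit matrix computation: the proof of Lemma~\ref{extnorm} already records that an irreducible family has $\varrho(\mathsf{B}) > 0$, and since $A_0$ alone has $\rho(A_0) = 1$ one gets $\varrho(\alpha) \geq 1$ for free; to strictly exceed $1$ one then notes that if $\varrho(\alpha) = 1$ then $\{A_0^{(\alpha)}, A_1^{(\alpha)}\}$ would be a bounded semigroup (by boundedness of products, again via Lemma~\ref{extnorm} or directly), which is impossible because $A_1 A_0$ is not power-bounded. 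I would present the explicit-word argument as the main line since it is self-contained and quantitative.

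There is no serious obstacle here: the only thing requiring a moment's care is the case of small $\alpha$, where a single product $A_1 A_0$ no longer suffices and one must pass to $A_1 A_0^k$ with $k = k(\alpha)$ large — but the eigenvalue estimate $\rho(\alpha A_1 A_0^k) \geq \tfrac{\alpha}{2}(k+2) - 1 \to \infty$ makes this routine. One should double-check the direction of the inequality chain coming from Lemma~\ref{bowf} (the supremum formula gives a lower bound on $\varrho(\alpha)$ from any single finite product, which is exactly what is needed) and that $0^k 1$ is indeed realised as $\pi_{k+1}(x)$ for some $x \in \Sigma$, which is trivial. I expect the proof to occupy only a few lines.
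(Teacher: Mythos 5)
Your main argument is correct and takes a genuinely different route from the paper. The paper argues by contradiction using the extremal norm from Lemma~\ref{extnorm}: if $\varrho(\alpha)\le 1$, then $\|A_0\|_\alpha\le\varrho(\alpha)\le 1$ forces $\sup_n\vvv A_0^n\vvv<\infty$, which contradicts $A_0^n=\left(\begin{smallmatrix}1&n\\0&1\end{smallmatrix}\right)$. You instead give a direct, constructive proof: exhibit the explicit word $u=0^k1$, compute $\mathcal{A}^{(\alpha)}(0^k1)=\alpha A_1A_0^k=\alpha\left(\begin{smallmatrix}1&k\\1&k+1\end{smallmatrix}\right)$, observe that its spectral radius $\tfrac{\alpha}{2}\bigl((k+2)+\sqrt{(k+2)^2-4}\bigr)\ge\tfrac{\alpha}{2}(k+2)$ tends to infinity with $k$, and then invoke the supremum identity of Lemma~\ref{bowf} to conclude $\varrho(\alpha)\ge\rho\bigl(\mathcal{A}^{(\alpha)}(0^k1)\bigr)^{1/(k+1)}>1$ once $k$ is large. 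This is self-contained, quantitative, and does not need Lemma~\ref{extnorm} at all, whereas the paper's route is shorter given that the extremal norm has just been set up. One small caveat about your ``second route'': you say the contradiction comes from $A_1A_0$ not being power-bounded, but the semigroup generated by $\mathsf{A}_\alpha$ contains $(\alpha A_1A_0)^n=\alpha^n(A_1A_0)^n$, which \emph{is} bounded for $\alpha\le\phi^{-2}$; the correct witness (and the one the paper uses) is $A_0$ itself, whose powers are unscaled by $\alpha$. This does not affect your main line, which is sound as written.
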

\begin{proof}
Assume $\varrho(\alpha) \leq 1$ for some $\alpha \in (0,1]$. Then we have $\sup\{\|A^{(\alpha)}\left(0^n\right)\|_\alpha\colon n \geq 1\} \leq 1$ by Lemma~\ref{extnorm} and consequently $\sup\{\vvv A^n_0\vvv \colon n \geq 1\}<\infty$. Since $A_0^n=\left(\begin{smallmatrix} 1 & n \\ 0 & 1\end{smallmatrix}\right)$, we have $\lim_{n \to \infty}\vvv A_0^n\vvv =+ \infty$ and therefore we must have $\varrho(\alpha)>1$.
\end{proof}

Fix some norm $\|\cdot\|_\alpha$ which satisfies the conditions of Lemma~\ref{extnorm}. The following key result is a variation on part of \cite[Thm~2.2]{QBWF}. We include a proof here for the sake of completeness.

\begin{lemma}\label{Zset}
For each $\alpha \in (0,1]$ define
\[Z_\alpha:=\bigcap_{n=1}^\infty \left\{x \in \Sigma \colon \left\|\mathcal{A}^{(\alpha)}(x,n)\right\|_\alpha = \varrho(\alpha)^n\right\}.\]
Then each $Z_\alpha$ is compact and nonempty, and satisfies $TZ_\alpha \subseteq Z_\alpha$.
\end{lemma}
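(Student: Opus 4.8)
The plan is to realise $Z_\alpha$ as a nested intersection of nonempty compact sets, so that compactness and nonemptiness follow from the finite intersection property, and then to verify shift-invariance directly from submultiplicativity. First I would observe that for each fixed $n \geq 1$ the set
\[
Z_\alpha^{(n)}:=\left\{x \in \Sigma \colon \left\|\mathcal{A}^{(\alpha)}(x,n)\right\|_\alpha = \varrho(\alpha)^n\right\}
\]
depends only on the first $n$ coordinates of $x$, hence is a finite union of cylinder sets and is therefore closed (indeed clopen) in the compact space $\Sigma$. So each $Z_\alpha^{(n)}$ is compact, and $Z_\alpha=\bigcap_{n\geq 1}Z_\alpha^{(n)}$ is compact as well. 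The substance is to show the intersection is nonempty.

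For nonemptiness I would argue that the sequence $\left(Z_\alpha^{(n)}\right)_{n\ge1}$ has the \emph{finite intersection property}, for which it suffices (since the family is not literally decreasing) to show that each $Z_\alpha^{(n)}$ is nonempty and that membership propagates downward in the following sense: if $x\in Z_\alpha^{(N)}$ for some $N$, then $x\in Z_\alpha^{(n)}$ for every $n\le N$. Here is where Lemma~\ref{extnorm} does the work. By the choice of $\|\cdot\|_\alpha$ we have $\|A_i^{(\alpha)}\|_\alpha\le\varrho(\alpha)$ for $i=0,1$, so by submultiplicativity $\|\mathcal{A}^{(\alpha)}(x,n)\|_\alpha\le\varrho(\alpha)^n$ for \emph{all} $x$ and $n$; thus $Z_\alpha^{(n)}$ is exactly the set where this upper bound is attained. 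Now if $x\in Z_\alpha^{(N)}$ and $n\le N$, write using the cocycle relation
\[
\varrho(\alpha)^N=\left\|\mathcal{A}^{(\alpha)}(x,N)\right\|_\alpha=\left\|\mathcal{A}^{(\alpha)}(T^nx,N-n)\,\mathcal{A}^{(\alpha)}(x,n)\right\|_\alpha\le\left\|\mathcal{A}^{(\alpha)}(T^nx,N-n)\right\|_\alpha\left\|\mathcal{A}^{(\alpha)}(x,n)\right\|_\alpha\le\varrho(\alpha)^{N-n}\left\|\mathcal{A}^{(\alpha)}(x,n)\right\|_\alpha,
\]
which forces $\|\mathcal{A}^{(\alpha)}(x,n)\|_\alpha\ge\varrho(\alpha)^n$, hence equality, i.e. $x\in Z_\alpha^{(n)}$. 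To get that each $Z_\alpha^{(N)}$ is itself nonempty, I would invoke the first identity in Lemma~\ref{bowf} (applied to the submultiplicative norm $\|\cdot\|_\alpha$), which gives $\varrho(\alpha)=\inf_{n\ge1}\max\{\|\mathcal{A}^{(\alpha)}(x,n)\|_\alpha^{1/n}:x\in\Sigma\}$; combined with the submultiplicativity bound $\max_x\|\mathcal{A}^{(\alpha)}(x,n)\|_\alpha^{1/n}\le\varrho(\alpha)$, this shows $\max_x\|\mathcal{A}^{(\alpha)}(x,n)\|_\alpha=\varrho(\alpha)^n$ for every $n$, so the maximising $x$ lies in $Z_\alpha^{(n)}$. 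By the downward-propagation just shown, any $x\in Z_\alpha^{(N)}$ lies in $\bigcap_{n\le N}Z_\alpha^{(n)}$, so these finite intersections are all nonempty, and compactness of $\Sigma$ gives $Z_\alpha=\bigcap_{n\ge1}Z_\alpha^{(n)}\ne\emptyset$.

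Finally, for $TZ_\alpha\subseteq Z_\alpha$: take $x\in Z_\alpha$ and $n\ge1$; I want $\|\mathcal{A}^{(\alpha)}(Tx,n)\|_\alpha=\varrho(\alpha)^n$. The upper bound $\le\varrho(\alpha)^n$ is automatic as above. For the lower bound, use $x\in Z_\alpha^{(n+1)}$ and the cocycle relation $\mathcal{A}^{(\alpha)}(x,n+1)=\mathcal{A}^{(\alpha)}(Tx,n)\,\mathcal{A}^{(\alpha)}(x,1)$ together with $\|\mathcal{A}^{(\alpha)}(x,1)\|_\alpha\le\varrho(\alpha)$:
\[
\varrho(\alpha)^{n+1}=\left\|\mathcal{A}^{(\alpha)}(x,n+1)\right\|_\alpha\le\left\|\mathcal{A}^{(\alpha)}(Tx,n)\right\|_\alpha\left\|\mathcal{A}^{(\alpha)}(x,1)\right\|_\alpha\le\left\|\mathcal{A}^{(\alpha)}(Tx,n)\right\|_\alpha\,\varrho(\alpha),
\]
so $\|\mathcal{A}^{(\alpha)}(Tx,n)\|_\alpha\ge\varrho(\alpha)^n$, giving equality. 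Since $n$ was arbitrary, $Tx\in Z_\alpha$. The only mild subtlety — the step I'd be most careful about — is the bookkeeping around the finite intersection property, since the $Z_\alpha^{(n)}$ are not a priori nested; the downward-propagation argument via the cocycle relation is exactly what repairs this, and it also makes the shift-invariance essentially a special case of the same computation.
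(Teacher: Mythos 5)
Your proof is correct and follows essentially the same strategy as the paper's: establish that each $Z_\alpha^{(n)}$ is closed and nonempty (via the Barabanov-type norm bound $\|\mathcal{A}^{(\alpha)}(x,n)\|_\alpha\le\varrho(\alpha)^n$ from Lemma~\ref{extnorm} together with the infimum formula in Lemma~\ref{bowf}), prove the nesting $Z_\alpha^{(n+1)}\subseteq Z_\alpha^{(n)}$ from the cocycle relation and submultiplicativity, deduce nonemptiness of the intersection by compactness, and obtain $TZ_\alpha\subseteq Z_\alpha$ by an almost identical cocycle computation. The only cosmetic differences are that you explicitly observe the $Z_\alpha^{(n)}$ are clopen and phrase the nestedness as a ``downward-propagation'' step, whereas the paper simply checks $Z_{\alpha,n+1}\subseteq Z_{\alpha,n}$ directly; these are the same observation.
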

\begin{proof}
Fix $\alpha \in (0,1]$ and define for each $n\geq 1$
\[Z_{\alpha,n}:= \left\{x \in \Sigma \colon \left\|\mathcal{A}^{(\alpha)}(x,n)\right\|_\alpha = \varrho(\alpha)^n\right\}.\]
Clearly each $Z_{\alpha,n}$ is closed. If some $Z_{\alpha,n}$ were to be empty, then by Lemma~\ref{extnorm} we would have $\sup\left\{\left\|\mathcal{A}^{(\alpha)}(x,n)\right\|_\alpha \colon x \in \Sigma\right\} < \varrho(\alpha)^n$, contradicting Lemma~\ref{bowf}. For each  $n \geq 1$ we have $Z_{\alpha,n+1}\subseteq Z_{\alpha,n}$, since if $x \in Z_{\alpha,n+1}$ then
\begin{align*}
\varrho(\alpha)^{n+1}= \left\|\mathcal{A}^{(\alpha)}(x,n+1)\right\|_\alpha &\leq \left\|\mathcal{A}^{(\alpha)}(T^nx,1)\right\|_\alpha \left\|\mathcal{A}^{(\alpha)}(x,n)\right\|_\alpha \\&\leq \varrho(\alpha)
\left\|\mathcal{A}^{(\alpha)}(x,n)\right\|_\alpha \leq \varrho(\alpha)^{n+1}
\end{align*}
using Lemma~\ref{extnorm} and it follows that $x \in Z_{\alpha,n}$ also. We deduce that the set $Z_\alpha = \bigcap_{n=1}^\infty Z_{\alpha,n}$ is nonempty. Since each $Z_{\alpha,n}$ is closed, $Z_\alpha$ is closed and hence is compact. Finally, if $x \in Z_{\alpha,n+1}$ then we also have
\begin{align*}
\varrho(\alpha)^{n+1}= \left\|\mathcal{A}^{(\alpha)}(x,n+1)\right\|_\alpha &\leq \left\|\mathcal{A}^{(\alpha)}(Tx,n)\right\|_\alpha \left\|\mathcal{A}^{(\alpha)}(x,1)\right\|_\alpha \\&\leq \varrho(\alpha)\left\|\mathcal{A}^{(\alpha)}(Tx,n)\right\|_\alpha \leq \varrho(\alpha)^{n+1}
\end{align*}
so that $Tx \in Z_{\alpha,n}$, and we deduce from this that $TZ_\alpha \subseteq Z_\alpha$.
\end{proof}

The remaining lemmas in this section will be applied in the proof of Theorem~\ref{technical} to characterise the extremal orbits of $\mathsf{A}_\alpha$.

\begin{lemma}\label{strongex}
Let $\alpha \in (0,1]$ and $x\in\Sigma$. If $x$ is recurrent and strongly extremal for $\mathsf A_\alpha$, then $x \in Z_\alpha$.
\end{lemma}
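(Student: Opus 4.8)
The plan is to argue by contradiction, using the fact that an extremal norm turns each one-step product into a contraction by the factor $\varrho(\alpha)$, so that a single ``deficient'' block occurring arbitrarily many times along $x$ forces the whole product to decay faster than $\varrho(\alpha)^n$.

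First I would replace $\vvv\cdot\vvv$ by the norm $\|\cdot\|_\alpha$ supplied by Lemma~\ref{extnorm}; since strong extremality is unaffected by the choice of norm, there is a constant $\delta>0$ with $\|\mathcal{A}^{(\alpha)}(x,n)\|_\alpha \geq \delta\varrho(\alpha)^n$ for all $n\geq 1$. Suppose, for a contradiction, that $x\notin Z_\alpha$. By the nesting $Z_{\alpha,n+1}\subseteq Z_{\alpha,n}$ established in the proof of Lemma~\ref{Zset}, there is some $m\geq 1$ with $x\notin Z_{\alpha,m}$; since submultiplicativity and $\|A_i^{(\alpha)}\|_\alpha\leq\varrho(\alpha)$ always give $\|\mathcal{A}^{(\alpha)}(x,m)\|_\alpha\leq\varrho(\alpha)^m$, we actually have $\|\mathcal{A}^{(\alpha)}(u)\|_\alpha\leq\lambda\varrho(\alpha)^m$ for the word $u:=\pi_m(x)$ and some $\lambda\in(0,1)$.

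Next I would invoke recurrence of $x$. The block $u$ is a subword of $x$ (it is the initial block), so it occurs in $x$ infinitely often; choosing occurrences greedily, I obtain for each $L\geq 1$ positions $0\leq q_1<q_2<\cdots<q_L$ with $q_{j+1}\geq q_j+m$ and $x_{q_j+i}=u_i$ for $1\leq i\leq m$, so that $\mathcal{A}^{(\alpha)}(T^{q_j}x,m)=\mathcal{A}^{(\alpha)}(u)$ for each $j$. Fix any $N\geq q_L+m$. Applying the cocycle relation $\mathcal{A}^{(\alpha)}(x,n+k)=\mathcal{A}^{(\alpha)}(T^nx,k)\mathcal{A}^{(\alpha)}(x,n)$ repeatedly at the endpoints of these occurrences, I would factor $\mathcal{A}^{(\alpha)}(x,N)$ into a product of $2L+1$ consecutive blocks: $L$ of them equal to $\mathcal{A}^{(\alpha)}(u)$, and the remaining ``gap'' blocks of the form $\mathcal{A}^{(\alpha)}(T^ax,\ell)$ for various $a$ and $\ell\geq 0$ whose lengths, together with the $L$ blocks of length $m$, sum to $N$. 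Each gap block of length $\ell$ satisfies $\|\mathcal{A}^{(\alpha)}(T^ax,\ell)\|_\alpha\leq\varrho(\alpha)^\ell$ since $\|A^{(\alpha)}_i\|_\alpha\leq\varrho(\alpha)$ for $i=0,1$, while each of the $L$ copies of $\mathcal{A}^{(\alpha)}(u)$ contributes a factor at most $\lambda\varrho(\alpha)^m$; submultiplicativity therefore yields $\|\mathcal{A}^{(\alpha)}(x,N)\|_\alpha\leq\lambda^L\varrho(\alpha)^N$.

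Letting $L\to\infty$ (and hence $N\to\infty$), the bound $\lambda^L\varrho(\alpha)^N$ eventually drops below $\delta\varrho(\alpha)^N$, contradicting the lower bound coming from strong extremality. Hence $x\in Z_\alpha$. I do not expect any real obstacle here; the only points needing a little care are the greedy selection of pairwise disjoint occurrences of $u$ and the bookkeeping in the telescoping factorization, both of which are routine given Lemmas~\ref{extnorm} and~\ref{Zset} and the definition of strong extremality.
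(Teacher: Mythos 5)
Your proof is correct and follows essentially the same route as the paper's: use the extremal norm from Lemma~\ref{extnorm} to reduce to a submultiplicative setting in which each one-step factor is a $\varrho(\alpha)$-contraction, extract from $x\notin Z_\alpha$ a finite block with a strict norm deficit, use recurrence to find arbitrarily many pairwise disjoint occurrences of that block, and telescope to get the contradiction. The paper phrases it as a contrapositive (recurrent and not in $Z_\alpha$ implies $\liminf \varrho(\alpha)^{-n}\|\mathcal{A}^{(\alpha)}(x,n)\|_\alpha=0$) but the argument is the same.
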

\begin{proof}
Let $\alpha \in (0,1]$ and $x\in\Sigma \setminus Z_\alpha$, and suppose that $x$ is recurrent. We shall show that $\liminf_{n \to \infty} \varrho(\alpha)^{-n}\left\|\mathcal{A}^{(\alpha)}(x,n)\right\|_\alpha = 0$ and therefore $x$ is not strongly extremal, which proves the lemma. Since $x \notin Z_\alpha$, there exist $\varepsilon>0$ and $n_0 \geq 1$ such that $\left\|\mathcal{A}^{(\alpha)}(x,n_0)\right\|_\alpha <(1-\varepsilon)\varrho(\alpha)^{n_0}$. Since $x$ is recurrent, it follows that for each $k \geq 1$ we may find integers $r_k>r_{k-1}>\ldots >r_2>r_1=0$ such that $\left\|\mathcal{A}^{(\alpha)}(T^{r_i}x,n_0)\right\|_\alpha <(1-\varepsilon)\varrho(\alpha)^{n_0}$ for each $i$. By increasing $k$ and passing to a subsequence if necessary, it is clear that we may assume additionally that $r_{i+1}>r_i+n_0$ for $1 \leq i < k$. Define also $r_{k+1}:=r_k+n_0+1$. We have
\begin{align*}
\left\|\mathcal{A}^{(\alpha)}(x,r_{k+1})\right\|_\alpha & \leq \prod_{i=1}^{k} \left\|\mathcal{A}^{(\alpha)}(T^{r_i}x,n_0)\right\|_\alpha \left\|\mathcal{A}^{(\alpha)}(T^{r_i+n_0}x,r_{i+1}-r_i-n_0)\right\|_\alpha \\
&\leq (1-\varepsilon)^k \varrho(\alpha)^{r_{k+1}},
\end{align*}
and since $k$ may be taken arbitrarily large we conclude that
$$
\liminf_{n \to \infty} \varrho(\alpha)^{-n}\left\|\mathcal{A}^{(\alpha)}(x,n)\right\|_\alpha = 0,
$$
as desired.
\end{proof}
The following lemma is a straightforward corollary of a more general result due to S. J. Schreiber \cite[Lemma 1]{Sch}:
\begin{lemma}\label{schr}
Let $(f_n)$ be a sequence of continuous functions from $\Sigma$ to $\mathbb{R}$ such that $f_{n+m}(x) \leq f_n(T^mx)+f_m(x)$ for all $x \in \Sigma$ and $n,m \geq 1$. Then for each $x \in \Sigma$ and $m \geq 1$,
\[\liminf_{n \to \infty}\frac{1}{nm}\sum_{k=0}^{n-1}f_m(T^kx) \geq \liminf_{n \to \infty} \frac{1}{n}f_n(x).\]
\end{lemma}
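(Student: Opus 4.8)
The plan is to deduce this from the cited result of Schreiber~\cite[Lemma~1]{Sch}, reinterpreting the subadditive-type hypothesis in the form to which that result applies, or alternatively to give a short self-contained Kingman/Fekete-style argument. Let me sketch the self-contained route, since it is the cleanest to present. Fix $x \in \Sigma$ and $m \geq 1$. The key observation is that the iterated subadditivity inequality allows us to bound $f_{nm}(x)$ from above by a sum of values of $f_m$ along the orbit of $x$: applying the hypothesis $f_{n+m}(y) \leq f_n(T^my) + f_m(y)$ repeatedly with step size $m$, one obtains
\[
f_{nm}(x) \leq \sum_{k=0}^{n-1} f_m\bigl(T^{km}x\bigr)
\]
for every $n \geq 1$. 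This already controls the subsequence $(nm)_{n\geq1}$, but the left-hand side of the desired inequality involves $(1/n)\sum_{k=0}^{n-1} f_m(T^k x)$, i.e.\ an average over \emph{all} shifts $T^k x$, not merely those with $k$ a multiple of $m$.

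To bridge this gap, I would average the above inequality over the $m$ possible starting points $x, Tx, \ldots, T^{m-1}x$. Writing the analogous bound
\[
f_{nm}\bigl(T^j x\bigr) \leq \sum_{k=0}^{n-1} f_m\bigl(T^{km+j}x\bigr)
\]
for $0 \leq j \leq m-1$ and summing over $j$, the right-hand sides combine to give $\sum_{i=0}^{nm-1} f_m(T^i x)$ exactly. Hence
\[
\sum_{j=0}^{m-1} f_{nm}\bigl(T^j x\bigr) \leq \sum_{i=0}^{nm-1} f_m\bigl(T^i x\bigr).
\]
Dividing by $nm$, the right-hand side becomes $(1/nm)\sum_{i=0}^{nm-1} f_m(T^i x)$. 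On the left, each term $\tfrac1{nm} f_{nm}(T^j x)$ has the same $\liminf$ as $n\to\infty$ as $\tfrac1{nm}f_{nm}(x)$, because a further single application of the hypothesis gives $|f_{nm}(T^j x) - f_{nm}(x)| = O(1)$ uniformly in $n$ (here one uses that the finitely many functions $f_1,\ldots,f_{m+j}$ are continuous on the compact space $\Sigma$, hence bounded, together with subadditivity in both directions). Therefore $\liminf_{n\to\infty} \tfrac1{nm}\sum_{j=0}^{m-1} f_{nm}(T^j x) \geq m \cdot \liminf_{n\to\infty}\tfrac1{nm}f_{nm}(x) \geq m\cdot\liminf_{n\to\infty}\tfrac1n f_n(x)$ after dividing through, which rearranges to the claimed bound once we note $\liminf$ along the subsequence $nm$ of $\tfrac1{nm}f_{nm}(x)$ is at least $\liminf_n \tfrac1n f_n(x)$ — wait, that inequality goes the wrong way for a general $\liminf$, so here one must instead invoke that $\tfrac1n f_n(x)$ converges (Fekete, via subadditivity) so that $\liminf$ along any subsequence equals the full limit.

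The main obstacle I anticipate is precisely this last point: one needs the full-sequence limit $\lim_{n\to\infty}\tfrac1n f_n(x)$ to exist in order to pass freely between the subsequence $(nm)$ and all of $\NN$. This follows from Fekete's subadditivity lemma applied to $n \mapsto f_n(x)$ — but note $f_n(x)$ is \emph{not} literally subadditive; rather $f_{n+m}(x) \leq f_n(T^m x) + f_m(x)$, which is not of the form $a_{n+m} \leq a_n + a_m$. The cleanest fix, and the reason the authors cite Schreiber rather than arguing directly, is that Schreiber's lemma handles exactly this twisted-subadditivity situation and directly yields the stated conclusion; if instead one wants to stay elementary, the orbit-averaging trick above combined with a compactness bound $\sup_{x}|f_n(x)|/n < \infty$ suffices to control the error terms without needing the limit to exist, by working with $\liminf$ throughout and being careful about the direction of each inequality. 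I would present the averaging argument as the main content and remark that it is a specialization of Schreiber's lemma.
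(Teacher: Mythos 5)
Your self-contained argument is genuinely different from the paper's treatment: the paper gives no proof at all for this lemma, presenting it as a direct corollary of \cite[Lemma~1]{Sch}, while you reconstruct an elementary derivation. The core manipulations are sound: the iterated subadditivity $f_{nm}(T^jx) \leq \sum_{k=0}^{n-1}f_m(T^{km+j}x)$ and the summation over $j=0,\ldots,m-1$ to obtain
\[
\sum_{j=0}^{m-1}f_{nm}(T^jx)\leq \sum_{i=0}^{nm-1}f_m(T^ix)
\]
are exactly right. However two points need to be fixed.

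First, the inequality you second-guess mid-argument is in fact correct: for any real sequence the $\liminf$ along a subsequence is always at least the $\liminf$ of the full sequence, so $\liminf_n \frac{1}{nm}f_{nm}(x) \geq \liminf_n \frac{1}{n}f_n(x)$ holds unconditionally, with no need for Fekete or for convergence of $\frac1n f_n(x)$. Your instinct to reach for Fekete is thus a red herring (and, as you correctly observe, Fekete does not literally apply to the twisted inequality $f_{n+m}(x)\leq f_n(T^mx)+f_m(x)$). Keeping everything in $\liminf$ terms and trusting the subsequence inequality avoids this detour entirely.

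Second --- and this is the actual gap --- even after the first point is resolved, your chain of inequalities only bounds the Birkhoff averages of $f_m$ along the subsequence of multiples of $m$: writing $S_N:=\sum_{k=0}^{N-1}f_m(T^kx)$ and $L:=\liminf_n\frac1n f_n(x)$, you obtain $\liminf_n \frac{S_{nm}}{nm}\geq mL$, which upon dividing by $m$ is a bound on $\liminf_n \frac{S_{nm}}{(nm)m}$, i.e.\ on the $\liminf$ of the lemma's left-hand quantity restricted to $N=nm$. But a lower bound along a subsequence does \emph{not} directly bound the $\liminf$ over all $N$ (that implication goes the other way). The missing step is short but must be made explicit: since $f_m$ is continuous on the compact space $\Sigma$ it is bounded, say $|f_m|\leq C$, hence for $nm\leq N<(n+1)m$ one has $|S_N - S_{nm}|\leq mC$ and $\frac{S_N}{Nm}-\frac{S_{nm}}{(nm)m}\to 0$, so $\liminf_N\frac{S_N}{Nm}=\liminf_n\frac{S_{nm}}{(nm)m}\geq L$. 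You gesture at this with a ``compactness bound'' remark, but since the subsequence-versus-full distinction is precisely where the rest of your argument became muddled, this step should be spelled out.
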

\begin{lemma}\label{weakex}
Let $\alpha \in (0,1]$ and suppose that the restriction of $T$ to $Z_\alpha$ is uniquely ergodic, with $\mu$ being its unique $T$-invariant Borel probability measure. Then $\mathfrak{r}:=\mu\left(\{x \in \Sigma \colon x_1 =1\}\right)$ is the unique optimal $1$-ratio of $\mathsf{A}_\alpha$, and if $x \in \Sigma$ is weakly extremal, then
\[
\lim_{n \to \infty}\frac1n\sum_{k=0}^{n-1} \mathrm{dist}\left(T^kx,\supp \mu\right) =0.
\]
\end{lemma}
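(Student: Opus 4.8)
\emph{Plan.} The idea is to transfer the one-dimensional weak-extremality hypothesis into an orbitwise Ces\`aro statement via Schreiber's lemma, and then to convert that into weak-$*$ convergence of empirical measures and read off both conclusions.

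First I would fix the submultiplicative operator norm $\|\cdot\|_\alpha$ supplied by Lemma~\ref{extnorm} and set $g_n(x):=\log\|\mathcal A^{(\alpha)}(x,n)\|_\alpha$ for $n\ge 1$. Since $\mathcal A^{(\alpha)}(x,n)$ is a product of invertible matrices it is nonzero, so $g_n$ is finite; and since $\mathcal A^{(\alpha)}(x,n)$ depends only on $\pi_n(x)$, each $g_n$ is locally constant, hence continuous. The cocycle identity $\mathcal A^{(\alpha)}(x,n+m)=\mathcal A^{(\alpha)}(T^mx,n)\,\mathcal A^{(\alpha)}(x,m)$ together with submultiplicativity gives $g_{n+m}(x)\le g_n(T^mx)+g_m(x)$ for all $n,m\ge 1$, so Lemma~\ref{schr} applies to the family $(g_n)$. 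If $x$ is weakly extremal then, by equivalence of the norms $\vvv\cdot\vvv$ and $\|\cdot\|_\alpha$ on $\Mat$, we have $n^{-1}g_n(x)\to\log\varrho(\alpha)$, so Lemma~\ref{schr} yields $\liminf_{n\to\infty}\frac1n\sum_{k=0}^{n-1}g_m(T^kx)\ge m\log\varrho(\alpha)$ for every fixed $m\ge 1$. On the other hand Lemma~\ref{extnorm} gives $g_m(y)\le m\log\varrho(\alpha)$ for all $y\in\Sigma$, so every partial average is at most $m\log\varrho(\alpha)$; combining the two bounds shows that
\[
\frac1n\sum_{k=0}^{n-1}h_m(T^kx)\longrightarrow 0\qquad(n\to\infty)
\]
for each $m$, where $h_m:=m\log\varrho(\alpha)-g_m\ge 0$ is continuous with zero set precisely $Z_{\alpha,m}=\{y:\|\mathcal A^{(\alpha)}(y,m)\|_\alpha=\varrho(\alpha)^m\}$.

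Next I would pass to the empirical measures $\nu_n:=\frac1n\sum_{k=0}^{n-1}\delta_{T^kx}$ on the compact metric space $\Sigma$. The space of Borel probability measures on $\Sigma$ is weak-$*$ compact and metrizable, so $(\nu_n)$ has accumulation points; any such $\nu$ is $T$-invariant by the standard Krylov--Bogolyubov argument, and since each $h_m$ is continuous and $\int h_m\,d\nu_n\to 0$ we obtain $\int h_m\,d\nu=0$, hence $\nu(Z_{\alpha,m})=1$ for every $m$. As $Z_{\alpha,m}\downarrow Z_\alpha$ (established in the proof of Lemma~\ref{Zset}), this forces $\nu(Z_\alpha)=1$, so the restriction of $\nu$ to $Z_\alpha$ is a $T|_{Z_\alpha}$-invariant Borel probability measure, whence $\nu=\mu$ by the unique ergodicity hypothesis. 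Since every accumulation point of $(\nu_n)$ equals $\mu$, we conclude $\nu_n\to\mu$ weak-$*$ for every weakly extremal $x$. Both assertions then follow by testing against continuous functions: the set $\{y:y_1=1\}$ is clopen, so its indicator $\chi$ is continuous and $\varsigma(\pi_n(x))=\frac1n\sum_{k=0}^{n-1}\chi(T^kx)=\int\chi\,d\nu_n\to\int\chi\,d\mu=\mathfrak r$, which is exactly the statement that $\mathfrak r$ is the unique optimal $1$-ratio of $\mathsf A_\alpha$; and $y\mapsto\mathrm{dist}(y,\supp\mu)$ is continuous and vanishes $\mu$-a.e., so $\frac1n\sum_{k=0}^{n-1}\mathrm{dist}(T^kx,\supp\mu)=\int\mathrm{dist}(\cdot,\supp\mu)\,d\nu_n\to 0$.

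I expect the crux to be the first paragraph: recognising that Schreiber's subadditive inequality, applied to the logarithmic cocycle $(g_n)$ and combined with the a priori upper bound $g_m\le m\log\varrho(\alpha)$ coming from the extremal norm, is precisely what promotes the scalar limit $n^{-1}g_n(x)\to\log\varrho(\alpha)$ into the orbitwise Ces\`aro convergence $\frac1n\sum_{k<n}h_m(T^kx)\to 0$. After that the argument is routine --- the passage to empirical measures, Krylov--Bogolyubov invariance, and unique ergodicity --- requiring only minor care for the identity $\bigcap_m Z_{\alpha,m}=Z_\alpha$ and for checking that a $T$-invariant measure on $\Sigma$ carried by $Z_\alpha$ restricts to a $T|_{Z_\alpha}$-invariant measure.
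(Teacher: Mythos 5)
Your proposal is correct and follows essentially the same line of reasoning as the paper's proof: Schreiber's lemma applied to the subadditive cocycle $g_n=\log\|\mathcal A^{(\alpha)}(\cdot,n)\|_\alpha$, combined with the a priori bound $g_m\le m\log\varrho(\alpha)$ from the extremal norm, to force Ces\`aro convergence, then Krylov--Bogolyubov and the nesting $Z_{\alpha,m}\downarrow Z_\alpha$ to identify every accumulation point of the empirical measures with $\mu$, and finally testing against the indicator of $\{y_1=1\}$ and against $\mathrm{dist}(\cdot,\supp\mu)$. The only cosmetic difference is that you phrase the intermediate step in terms of $h_m=m\log\varrho(\alpha)-g_m$ rather than directly in terms of $\int\frac1N\log\|\mathcal A^{(\alpha)}(\cdot,N)\|_\alpha\,d\mu_n$, which is an equivalent formulation.
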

\begin{proof}

Let $\mathcal{M}$ denote the set of all Borel probability measures on $\Sigma$ equipped with the weak-* topology, which is defined to be the smallest topology such that $\mu \mapsto \int f\,d\mu$ is continuous for every continuous function $f \colon \Sigma \to \mathbb{R}$. This topology makes $\mathcal{M}$ a compact metrisable space \cite[Thm. II.6.4]{Parth}. Let us fix $\alpha \in (0,1]$ and suppose that $x \in \Sigma$ is weakly extremal. For each $n \geq 1$ define $\mu_n:=(1/n)\sum_{k=0}^{n-1}\delta_{T^kx} \in \mathcal{M}$, where $\delta_z \in \mathcal{M}$ denotes the Dirac probability measure concentrated at $z \in \Sigma$. We claim that $\lim_{n \to \infty} \mu_n = \mu$ in the weak-* topology.

Applying Lemma \ref{schr} with $f_n(x):=\log \left\|\mathcal{A}^{(\alpha)}\left(x,n\right)\right\|_\alpha$ and noting that $f_n(x) \leq n\log\varrho(\alpha)$ for all $x$ and $n$, we obtain
\begin{equation}\label{calc0}\lim_{n \to \infty} \int \frac{1}{N}  \log \left\|\mathcal{A}^{(\alpha)}\left(z,N\right)\right\|_\alpha d\mu_n(z)=\lim_{n \to \infty}\frac{1}{nN}\sum_{i=0}^{n - 1}\log\left\|\mathcal{A}^{(\alpha)}\left(T^ix,N\right)\right\|_\alpha =\log \varrho(\alpha)\end{equation}
for every $N \geq 1$. As in the proof of Lemma~\ref{Zset} we let $Z_{\alpha,N} = \{z \in \Sigma \colon \|\mathcal{A}^{(\alpha)}(z,N)\|_\alpha=\varrho(\alpha)^N\}$ for each $N \geq 1$, and we recall that $Z_{\alpha,N+1}\subseteq Z_{\alpha,N}$ for every $N$. Let $\nu \in \mathcal{M}$ be any limit point of the sequence $(\mu_n)$. If $f \colon \Sigma \to \mathbb{R}$ is any continuous function then it follows easily from the definition of $(\mu_n)$ that $|\int f\,d\nu - \int f\circ T \,d\nu| \leq \limsup_{n \to \infty} |\int f\circ T  d\mu_n - \int f\,d\mu_n| =0$ and it follows that $\nu$ is $T$-invariant. For each $N \geq 1$ we have
\[ \int \frac{1}{N}  \log \left\|\mathcal{A}^{(\alpha)}(z,N)\right\|_\alpha d\nu(z)=\log \varrho(\alpha),\]
and since $\left\|\mathcal{A}^{(\alpha)}(z,N) \right\|_\alpha\leq \varrho(\alpha)^N$ for all $z \in \Sigma$ it follows from this that $\nu\left(Z_{\alpha,N}\right)=1$. Since this applies for every $N$, and $Z_{\alpha,N+1} \subseteq Z_{\alpha,N}$ for every $N$, we deduce that $\nu(Z_\alpha)=1$. By hypothesis $\mu$ is the unique $T$-invariant element of $\mathcal{M}$ giving full measure to $Z_\alpha$, and it follows that $\nu = \mu$. We have shown that $\mu$ is the only weak-* accumulation point of the sequence $(\mu_n)$, and since $\mathcal{M}$ is compact and metrisable we deduce that $\lim_{n \to \infty}\mu_n=\mu$, which completes the proof of the claim.

The proof of the lemma now follows easily. Let $f \colon \Sigma \to \mathbb{R}$ be the characteristic function of the set $\{x \in \Sigma \colon x_1=1\}$, and note that $f$ is continuous since this set is both open and closed. Define a further continuous function by $g(x):=\mathrm{dist}(x,\supp \mu)$. Since $\mu_n \to \mu$ we may easily derive
\[\lim_{n \to \infty} \varsigma(\pi_n(x)) = \lim_{n \to \infty} \frac{1}{n}\sum_{i=0}^{n-1}f\left(T^ix\right) = \lim_{n \to \infty} \int f\,d\mu_n = \int f\,d\mu = \mu(\{x \in \Sigma \colon x_1=1\})=\mathfrak{r}\]
and
\[\lim_{n \to \infty} \frac{1}{n}\sum_{i=0}^{n-1}\mathrm{dist}(T^ix,\supp \mu) = \lim_{n \to \infty} \int g\,d\mu_n = \int g\,d\mu =0\]
as required. The proof is complete.

\end{proof}

\section{General properties of balanced words}\label{sec4}

In this short and mostly expository section we present some combinatorial properties of balanced words which will be applied in subsequent sections. We first require some additional definitions.

Given two nonempty finite words $u,v$ of equal length, we write $u < v$ if $u$ strictly precedes  $v$ in the lexicographical order: that is, $u < v$ if and only if there is $k \geq 1$ such that $u_k=0$, $v_k=1$, and $u_i=v_i$ when $1 \leq i <k$. We define the \emph{reverse} of a finite word $u$, which we denote by $\tilde u$, to be the word obtained by listing the terms of $u$ in reverse order.   That is, if $u = u_1 u_2 \cdots u_n$ then $\tilde u = u_n u_{n-1} \cdots u_1$.  We say that a finite word $p$ is a \emph{palindrome} if $\tilde p = p$. Since the reverse of the empty word is also the empty word, the empty word is a palindrome. We say that two finite words $u$ and $v$ of equal length are \emph{cyclic permutations} of each other, and write $u \simeq v$, if there exist finite words $a$ and $b$ such that $u=ab$ and $v=ba$. For each $n\geq 0$ this defines an equivalence relation on the set of words of length $n$.

We begin by collecting together some standard results from  \cite{Lot}:
\begin{lemma}\label{nolongwords}
Let $\gamma \in (0,1)$ and $x \in X_\gamma$, and choose any $N > \max\{\lceil \gamma^{-1}\rceil, \lceil (1-\gamma)^{-1}\rceil\}$. Then neither $0^N$ nor $1^N$ is a subword of $x$.
\end{lemma}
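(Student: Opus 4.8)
The plan is to exploit the explicit arithmetic description of elements of $X_\gamma$ mentioned just after Theorem~\ref{Xphi}: every $x \in X_\gamma$ has the form $x_n = \lfloor (n+1)\gamma + \delta\rfloor - \lfloor n\gamma + \delta\rfloor$ (or the same with ceilings) for some $\delta \in [0,1)$. Under this representation, a block $x_{k+1}x_{k+2}\cdots x_{k+N}$ consists entirely of $1$'s precisely when the orbit $\{k\gamma+\delta\}, \{(k+1)\gamma+\delta\},\ldots,\{(k+N)\gamma+\delta\}$ crosses an integer at every single step, i.e. the point $k\gamma+\delta$ together with its first $N$ translates by $\gamma$ each jump past an integer; similarly a block of $N$ zeros means that $N$ consecutive translates by $\gamma$ fail to cross an integer. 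So the combinatorial statement reduces to an elementary fact about the circle rotation $z\mapsto z+\gamma$ on $\mathbb{R}/\mathbb{Z}$.

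First I would record the reduction to the circle: counting integer crossings, $|x_{k+1}\cdots x_{k+N}|_1 = \lfloor (k+N)\gamma+\delta\rfloor - \lfloor k\gamma+\delta\rfloor$, which lies within $1$ of $N\gamma$. Hence $0^N \prec x$ forces $\lfloor (k+N)\gamma+\delta\rfloor = \lfloor k\gamma+\delta\rfloor$ for some $k$, so $N\gamma < 1$ would be needed if $\delta$ is chosen so that $k\gamma+\delta$ is just above an integer — more precisely $0^N\prec x$ implies $N\gamma \le \lfloor (k+N)\gamma+\delta\rfloor - \lfloor k\gamma+\delta\rfloor + 1 = 1$, hence $N \le \gamma^{-1}$; and likewise $1^N\prec x$ gives $N \le (1-\gamma)^{-1}$ by applying the same bound to the complementary word $\tilde{\tilde{}}$—rather, to the word obtained by swapping $0$ and $1$, which corresponds to the rotation by $1-\gamma$. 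Both contradict the choice $N > \max\{\lceil\gamma^{-1}\rceil, \lceil(1-\gamma)^{-1}\rceil\}$. An alternative, perhaps cleaner, route is to invoke the balancedness of $x$ together with part~(ii) of Theorem~\ref{Xphi}: the unique ergodic measure gives the cylinder $\{x_1=1\}$ mass $\gamma$, so by the ergodic theorem (or by balancedness directly) every length-$N$ subword of $x$ has $|u|_1$ within $1$ of $N\gamma$; if $0^N\prec x$ then $0 \ge N\gamma - 1$, forcing $N \le \gamma^{-1}$, and symmetrically for $1^N$.

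I would in fact prefer the second route, since it uses only results already available in the excerpt and avoids re-deriving the Beatty-sequence representation. The argument is then: by Theorem~\ref{Xphi}(ii), $X_\gamma$ is minimal and uniquely ergodic with $\mu(\{x_1=1\})=\gamma$; by the characterisation of balancedness, for any subword $u\prec x$ of length $N$ and any other length-$N$ subword $v\prec x$ we have $\bigl||u|_1 - |v|_1\bigr|\le 1$, and since subwords of all large lengths exist realising $1$-ratios arbitrarily close to $\gamma$ (by unique ergodicity applied along the orbit of any point), $\bigl||u|_1 - N\gamma\bigr| \le 1$ for every length-$N$ subword $u$. Taking $u=0^N$ gives $N\gamma \le 1$, i.e. $N \le \lceil\gamma^{-1}\rceil$, contradicting $N > \lceil\gamma^{-1}\rceil$; taking $u=1^N$ gives $N(1-\gamma)\le 1$, contradicting $N > \lceil(1-\gamma)^{-1}\rceil$. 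Hence neither $0^N$ nor $1^N$ is a subword of $x$.

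The main obstacle is a minor bookkeeping point rather than a conceptual one: making rigorous the step "$\bigl||u|_1 - N\gamma\bigr| \le 1$ for every length-$N$ subword", which requires knowing that $X_\gamma$ contains subwords of every length whose $1$-ratio approaches $\gamma$ (equivalently, that $\varsigma(\pi_n(x))\to\gamma$, which is the content of the first sentence of Theorem~\ref{Xphi} together with part~(ii)). Once that is in hand the deduction is immediate. The strict-versus-non-strict inequalities at the boundary ($N = \gamma^{-1}$ exactly, when $\gamma^{-1}\in\mathbb{Z}$) are handled automatically by the ceiling functions in the hypothesis, so no special cases arise.
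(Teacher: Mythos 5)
Your preferred (second) route is essentially the paper's proof: both rest on the inequality $\gamma N - 1 \le |u|_1 \le \gamma N + 1$ for every length-$N$ subword $u$ of $x$, then observe that the hypothesis $N > \max\{\lceil\gamma^{-1}\rceil,\lceil(1-\gamma)^{-1}\rceil\}$ forces $0 < |u|_1 < N$. The only difference is that the paper simply cites \cite[Prop. 2.1.10]{Lot} for this bound, whereas you sketch a derivation of it from balancedness.

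One small point worth tightening in your sketch: balancedness compares only subwords of \emph{equal} length, so the step from ``longer subwords have $1$-ratio close to $\gamma$'' to ``$||u|_1 - N\gamma|\le 1$ for the length-$N$ subword $u$'' is not quite a direct appeal to balancedness as written. The clean version of the argument you are gesturing at is: take $w := \pi_{kN}(x)$, split it into $k$ disjoint length-$N$ blocks $w^{(1)},\dots,w^{(k)}$ (each a subword of $x$, so each satisfies $\bigl||w^{(i)}|_1 - |u|_1\bigr|\le 1$ by balancedness), sum to get $\bigl||w|_1 - k|u|_1\bigr| \le k$, divide by $kN$, and let $k\to\infty$ using $\varsigma(\pi_{kN}(x))\to\gamma$ to conclude $||u|_1/N - \gamma| \le 1/N$. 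You correctly identified this as ``minor bookkeeping,'' and indeed it is — but as written the deduction is not yet a proof. Since the paper just cites Lothaire for precisely this inequality, your approach ends up being the same argument with the cited lemma unpacked.
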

\begin{proof}
Let $u \prec x$ with $|u|=N$. By \cite[Prop. 2.1.10]{Lot} we have $\gamma|u| +  1 \geq |u|_1 \geq \gamma |u| -1$. In particular we have $|u|_1 > \gamma \lceil \gamma^{-1}\rceil - 1 \geq 0$ and $|u|-|u|_1 > (1-\gamma)\lceil (1-\gamma)^{-1}\rceil -1\geq 0$, so $0<|u|_1<|u|$ and $u$ cannot be equal to $0^N$ or $1^N$.
\end{proof}
\begin{definition}
Let $\mathcal{W} \subset \Omega \times \Omega$ be the smallest set with the following two properties: $(0,1) \in \mathcal{W}$; if $(u,v)\in \mathcal{W}$, then $(uv,v) \in \mathcal{W}$ and $(u,vu)\in\mathcal{W}$. We say that $u \in \Omega$ is a \emph{standard word} if either $(v,u) \in \mathcal{W}$ or $(u,v)\in \mathcal{W}$ for some $v \in \Omega$.
\end{definition}
\begin{lemma}\label{standard}
The set of standard words has the following properties:
\begin{enumerate}[(i)]
\item
If $u$ is standard, with $|u|=q$ and $|u|_1=p$, then $u^\infty \in X_{p/q}$.
\item
For every $\gamma \in [0,1]$ there exists $x \in X_\gamma$ such that for infinitely many $q \in \mathbb{N}$ the word $\pi_q(x)$ is standard.
\end{enumerate}
\end{lemma}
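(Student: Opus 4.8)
The plan is to translate the inductive definition of $\mathcal{W}$ into a statement about substitutions and then lean on the classical combinatorics of Sturmian words. For part (i), introduce the monoid morphisms $L,R\colon\Omega\to\Omega$ determined by $L(0)=0$, $L(1)=10$, $R(0)=01$, $R(1)=1$. A short induction on the construction of $\mathcal{W}$ shows that $(u,v)\in\mathcal{W}$ if and only if $(u,v)=(\phi(0),\phi(1))$ for some composition $\phi$ of copies of $L$ and $R$ (with the empty composition $\mathrm{id}$ allowed): the base pair $(0,1)$ equals $(\mathrm{id}(0),\mathrm{id}(1))$, the rule $(u,v)\mapsto(uv,v)$ corresponds to replacing $\phi$ by $\phi\circ R$ (since $\phi(R(0))=\phi(0)\phi(1)$ and $\phi(R(1))=\phi(1)$), and the rule $(u,v)\mapsto(u,vu)$ corresponds to $\phi\mapsto\phi\circ L$. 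Hence a standard word is precisely a word of the form $\phi(a)$ with $a\in\{0,1\}$, and since $\phi(0)$ and $\phi(1)$ are nonempty we may extend $\phi$ letterwise to $\Sigma$ to get $u^\infty=\phi(a)^\infty=\phi(a^\infty)$. Now $a^\infty$ is trivially balanced, and each of $L$ and $R$ maps balanced infinite words to balanced infinite words — a standard fact (see \cite[Ch.~2]{Lot}; it is checked by examining how subwords of $L(y)$ and $R(y)$ arise from subwords of $y$) — so by induction on the length of the composition $\phi$ preserves balancedness and $u^\infty$ is balanced. Being periodic, $u^\infty$ is recurrent, and clearly $\varsigma(u^\infty)=|u|_1/|u|=p/q$; by the definition of $X_{p/q}$ this gives $u^\infty\in X_{p/q}$.

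For part (ii) I would treat $\gamma\in(0,1)$; the endpoints are degenerate ($X_0=\{0^\infty\}$ and $X_1=\{1^\infty\}$) and are dealt with directly. First suppose $\gamma=p/q\in(0,1)$ is rational in lowest terms. It is classical — and, in the morphism picture, amounts to the fact that every primitive nonnegative integer vector arises as a column of a product of the substitution matrices of $L$ and $R$ — that there is a pair $(u,v)\in\mathcal{W}$ with $\varsigma(v)=p/q$; since $0<\varsigma(v)<1$, the word $v$ contains both letters, in particular a $0$. Iterating the rule $(u,v)\mapsto(uv,v)$ gives $(uv^k,v)\in\mathcal{W}$, so $uv^k$ is standard for every $k\ge 0$. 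A two-line induction on $\mathcal{W}$ shows that whenever the second component of a pair contains a $0$, its first component is a suffix of a power of the second: the rule $(u,v)\mapsto(uv,v)$ turns a suffix of $v^m$ into a suffix of $v^{m+1}$; the rule $(u,v)\mapsto(u,vu)$ keeps the first component a suffix of $vu$ trivially; and the base case is vacuous since the second component of $(0,1)$ is $1$. Thus our $u$ is a suffix of some $v^m$, so $x:=uv^\infty=T^{m|v|-|u|}(v^\infty)$ lies in the finite orbit of the periodic word $v^\infty$; hence $x$ is periodic (so recurrent), is balanced (by part (i), using that shifts preserve balancedness), and has $1$-ratio $\varsigma(v^\infty)=p/q$. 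Therefore $x\in X_{p/q}$, and $\pi_{|u|+k|v|}(x)=uv^k$ is standard for every $k$.

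Finally, let $\gamma\in(0,1)$ be irrational. The standard-sequence construction attached to its continued fraction expansion (see \cite[Ch.~2]{Lot}) produces standard words $s_1,s_2,\ldots$ such that $s_n$ is a prefix of $s_{n+1}$, $|s_n|\to\infty$ and $\varsigma(s_n)\to\gamma$; let $x:=\lim_n s_n$, which is well defined as the $s_n$ are nested prefixes. Every finite subword of $x$ is a subword of some $s_n$, hence balanced by part (i), so $x$ is balanced; by Theorem~\ref{Xphi} the limit $\varsigma(x)$ exists, and since $\pi_{|s_n|}(x)=s_n$ and $\varsigma(s_n)\to\gamma$ this limit equals $\gamma$. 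As $x$ is the characteristic Sturmian word of slope $\gamma$, which is (uniformly) recurrent, we conclude $x\in X_\gamma$, with $s_1,s_2,\ldots$ as infinitely many standard prefixes. I expect the genuinely nontrivial ingredients to be the combinatorial fact behind part (i) — that $L$ and $R$ send balanced sequences to balanced sequences — and, in the rational case of (ii), the observation that the first component is a suffix of a power of the second, which is exactly what makes $uv^\infty$ periodic (hence recurrent, hence in $X_{p/q}$) rather than only eventually periodic; the rest is bookkeeping with the definition of $\mathcal{W}$.
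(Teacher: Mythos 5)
Your proof is correct, but it takes a genuinely different route from the paper. The paper's proof of (i) handles $q=1$ trivially and, for $q>1$, quotes \cite[Prop.~2.2.15]{Lot} for the fact that standard words are balanced, then observes that $u^n$ (or $u^n$ embedded via $vu^n$, respectively $uv^n$) is always a subword of some standard word, so every power of $u$ is balanced and hence $u^\infty$ is balanced; periodicity gives recurrence and the $1$-ratio is clear. For (ii) the paper simply produces the characteristic word $x_n = \lfloor \gamma(n+2)\rfloor - \lfloor\gamma(n+1)\rfloor$ and again invokes \cite[Prop.~2.2.15]{Lot} for the fact that it has infinitely many standard prefixes. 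You instead recast $\mathcal{W}$ in terms of the Sturmian morphisms $L,R$ (whose incidence matrices are exactly the paper's $A_0,A_1$, a point the paper never exploits at this stage), obtain (i) from morphism-preservation of balancedness applied to $a^\infty$, and for the rational case of (ii) give a neat self-contained argument that the first coordinate of a pair in $\mathcal{W}$ whose second coordinate contains a $0$ is a suffix of a power of the second, so that $uv^\infty$ is a genuine shift of $v^\infty$ rather than merely eventually periodic. That last point is the one place where a careless reader might go wrong, and you handled it correctly. Both proofs lean on \cite{Lot} for the genuinely nontrivial ingredient (for the paper, ``standard $\Rightarrow$ balanced''; for you, ``$L,R$ preserve balancedness'' and, in the irrational case, the continued-fraction sequence of standard prefixes), so the overall reliance on external facts is comparable; what your version buys is more visible combinatorial structure and a cleaner, explicit treatment of the rational case of (ii), at the cost of introducing the morphism machinery the paper otherwise avoids.
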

\begin{proof}
(i). If $q=1$ then the result is trivial. For $q>1$, \cite[Prop. 2.2.15]{Lot} shows that every standard word is balanced. If $u$ is standard, then it is clear from the definition that $u^n$ is a subword of a standard word for every $n \geq 1$. In particular every $u^n$ is balanced and therefore $u^\infty$ is balanced.

(ii). Let $x$ be the infinite word defined by $x_n:= \lfloor \gamma(n+2)\rfloor - \lfloor \gamma(n+1)\rfloor \in \{0,1\}$ for all $n \geq 1$. This word is called the \emph{characteristic word} for $\gamma$. It is shown in \cite[Prop 2.2.15]{Lot} that $x$ has the required properties.
\end{proof}
The following result is given in the proof of \cite[Prop. 2.1.3]{Lot}. Note that $p$ may be the empty word; for example, this is true in the case $w = 0011$.
\begin{lemma}\label{lothaire1}
Let $w$ be a finite word which is not balanced, let $u$ and $v$ be subwords of $w$ of equal length such that $|u|_1 \geq 2+|v|_1$, and suppose that $u,v$ have the minimum possible length for which this property may be satisfied. Then there is a palindrome $p$ such that $u=1p 1$ and $v = 0 p 0$.
\end{lemma}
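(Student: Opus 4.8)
The plan is to analyse a pair $u,v$ of minimal length witnessing the failure of balance, and to squeeze both words down to the claimed palindromic form by a symmetry and minimality argument. First I would observe that the hypothesis $|u|_1 \geq |v|_1 + 2$ together with minimality of $|u|=|v|$ forces $|u|_1 = |v|_1 + 2$ exactly: if the gap were $\geq 3$ then deleting the first letter of each of $u$ and $v$ would decrease each $1$-count by at most $1$, leaving a gap of $\geq 2$ in a strictly shorter pair (a pair of length $|u|-1$ still consisting of subwords of $w$), contradicting minimality. The same deletion argument, applied at the first and last letters, shows that $u$ must begin and end with $1$ while $v$ must begin and end with $0$: for instance if $u$ began with $0$, then stripping that leading $0$ from $u$ and the leading letter of $v$ would again yield a strictly shorter bad pair unless that leading letter of $v$ is a $1$, but then $v$ begins with $1$ and $u$ begins with $0$, and stripping instead the trailing letters (or re-examining the counts) produces the contradiction; I would write this out carefully by checking the four sign cases for the leading (resp. trailing) letters of $(u,v)$ and discarding all but the one that cannot be shortened. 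Hence we may write $u = 1 p 1$ and $v = 0 q 0$ with $|p| = |q| = |u| - 2$ and, crucially, $|p|_1 = |q|_1$ (both equal $|u|_1 - 2 = |v|_1$).

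Next I would show $p = q$. Suppose not; let $k$ be the first index at which $p$ and $q$ differ. By minimality of $|u|$, every strictly shorter pair of equal-length subwords of $w$ has $1$-counts differing by at most $1$, i.e.\ $w$ restricted to the relevant scales is ``almost balanced''; in particular the prefixes $p_1\cdots p_{k-1}$ and $q_1\cdots q_{k-1}$ (which are equal) give no information, but comparing the length-$k$ prefixes $1 p_1 \cdots p_{k-1} p_k$ versus $q_1 \cdots q_{k-1} q_k 0$-type truncations of $u$ and $v$ — more precisely, comparing suitable equal-length subwords obtained from $u$ and $v$ by trimming $1$ from one end and $0$ from the other — produces a bad pair shorter than $(u,v)$ unless $p_k = q_k$. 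The cleanest way to run this is: since $|p|_1 = |q|_1$ and $p \neq q$, there exist indices where $p$ has a $1$ and $q$ has a $0$ and vice versa; locate the leftmost such ``discrepancy'' and use it to build a length-$(|u|-2)$ bad pair (namely a factor of $w$ containing a long run engineered from $u$ against one from $v$), again contradicting minimality. This step — pinning down that no internal discrepancy between $p$ and $q$ can survive — is where the real combinatorial work lies, and I expect it to be the main obstacle; the argument of \cite[Prop.~2.1.3]{Lot} presumably handles exactly this, so I would follow that line, being careful that the shorter pair we extract genuinely consists of subwords of the \emph{same} word $w$.

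Finally, once $p = q$, it remains to see that $p$ is a palindrome. Here I would use the reversal symmetry: $u \prec w$ and $v \prec w$ do not immediately give $\tilde u, \tilde v \prec w$, so instead I argue directly on $p$. Write $p$ against its reverse: if $p \neq \tilde p$, then there is a first position $j$ with $p_j \neq p_{|p|+1-j}$, and then the two length-$j$ prefixes $p_1\cdots p_j$ and $p_{|p|+1-j}\cdots p_{|p|}$ (the latter read in order) are equal-length subwords of $w$ with differing last letters but identical first $j-1$ letters, so their $1$-counts differ by exactly $1$ — admissible on its own, but now I would feed this back: combine such a prefix of $u = 1p1$ with the mirror suffix of $v = 0p0$ to manufacture a bad pair of length $\leq |u| - 1$, contradicting minimality unless $p = \tilde p$. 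Since $p$ is then a palindrome, $u = 1p1$ and $v = 0p0$ with $\tilde p = p$, which is exactly the assertion, completing the proof. Throughout I would lean on Lemma~\ref{nolongwords}-style bookkeeping only implicitly; the core tool is just the minimality of $(u,v)$ repeatedly applied after trimming one letter from each end.
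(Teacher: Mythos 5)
The paper never proves this lemma: it simply cites it as an extract from the proof of Proposition~2.1.3 of Lothaire's \emph{Algebraic Combinatorics on Words}, so there is no in-paper argument to compare against. Your reconstruction follows the standard Lothaire route and the architecture is sound --- minimality forces $|u|_1 = |v|_1 + 2$ exactly and forces $u$ to begin and end with $1$ while $v$ begins and ends with $0$, giving $u = 1p1$ and $v = 0q0$ with $|p|=|q|$ and $|p|_1 = |q|_1$; one then shows $p = q$ and $\tilde p = p$ by exhibiting strictly shorter bad pairs. Your palindrome step is described correctly in spirit: at the first $j$ with $p_j \ne p_{|p|+1-j}$ you pair a prefix of one of $u,v$ with a suffix of the other (which sign forces which pairing), and this is exactly how one avoids needing $\tilde u, \tilde v$ to be subwords of $w$.

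The one step you describe incorrectly is $p = q$. You propose comparing $1p_1\cdots p_k$ against a string of the form $q_1\cdots q_k 0$, ``trimming $1$ from one end and $0$ from the other,'' but $q_1\cdots q_k 0$ is not a subword of $v = 0q0$ in general, so that pair is not witnessed by $w$ and the argument as written does not give a contradiction. The correct comparison is like-with-like. Let $k$ be the first index with $p_k\ne q_k$, and let $c = |p_1\cdots p_{k-1}|_1 = |q_1\cdots q_{k-1}|_1$. If $p_k=1$, $q_k=0$, compare the \emph{prefixes} of $u$ and of $v$ of length $k+1$, namely $1p_1\cdots p_k$ (count $c+2$) and $0q_1\cdots q_k$ (count $c$). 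If $p_k=0$, $q_k=1$, compare the \emph{suffixes} of length $|p|-k+1$, namely $p_{k+1}\cdots p_{|p|}1$ (count $|p|_1 - c + 1$) and $q_{k+1}\cdots q_{|p|}0$ (count $|q|_1 - c - 1 = |p|_1 - c - 1$). In either case the counts differ by $2$ and the pair is strictly shorter than $(u,v)$, which is the required contradiction. With that repair, your proposal is a complete and correct proof of the lemma.
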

The following two results arise in the fourth named author's PhD thesis \cite{Theys}:
\begin{lemma}\label{theys1}
Let $w$ be a finite word and $p$ a palindrome, and suppose that $0p0$ and $1p1$ are subwords of $w$. Then there is a finite word $b$, which may be empty, such that either $0p0b1p1$ or $1p1b0p0$ is a subword of $w$.
\end{lemma}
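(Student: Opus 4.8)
The plan is to argue by induction on $|w|$, using Lemma~\ref{lothaire1} to detect an unbalanced witness and the palindromic structure it forces. Suppose $0p0$ and $1p1$ are both subwords of $w$, but that neither $0p0b1p1$ nor $1p1b0p0$ is a subword of $w$ for any (possibly empty) word $b$. The key observation is that this hypothesis says that no occurrence of $1p1$ in $w$ follows an occurrence of $0p0$ with nothing of the relevant structure blocking it, and symmetrically — more precisely, it forces the occurrences of $0p0$ and $1p1$ in $w$ to \emph{overlap} in a controlled way, since if an occurrence of $0p0$ ended at or before the start of an occurrence of $1p1$ we could read off a subword of the forbidden form $0p0b1p1$ (with $b$ the intervening block), and conversely. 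So the first step is to make this precise: fix an occurrence of $0p0$ and an occurrence of $1p1$ whose starting positions are as close together as possible; by the forbidden-subword hypothesis these two occurrences must genuinely overlap, i.e. the later one starts strictly inside the earlier one.

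The second step is to exploit the overlap combinatorially. Write $m:=|p|$, so $|0p0|=|1p1|=m+2$. Without loss of generality say the occurrence of $0p0$ starts at position $i$ and the occurrence of $1p1$ starts at position $j$ with $i<j\le i+m+1$. The overlapping letters must agree: the suffix of $0p0$ of length $(i+m+2)-j$ equals the corresponding prefix of $1p1$. This is a classical period/border condition. If $j=i+m+1$ the two blocks share exactly one letter, which would be simultaneously the last letter $0$ of $0p0$ and the first letter $1$ of $1p1$ — impossible; so in fact $j\le i+m$, and the overlap has length $\ell:=(i+m+2)-j\ge 2$. The overlap being a suffix of $0p0$ starting with $0$ (since for $j>i$ it begins at a letter of $0p$, which at position $i$... careful: position $j$ within $0p0$ is letter number $j-i+1$; for $j-i+1=1$ it is the leading $0$, excluded since $j>i$; for $2\le j-i+1\le m+1$ it is a letter of $p$) and simultaneously a prefix of $1p1$ beginning with $1$. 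Thus $p$ itself contains a position that is forced, via the palindrome symmetry $\tilde p=p$, to carry conflicting values, or else the overlap forces a strictly shorter palindrome $p'$ and strictly shorter pair $0p'0$, $1p'1$ sitting as subwords of a strictly shorter subword $w'$ of $w$, to which the inductive hypothesis applies.

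The main obstacle — and the step I would spend the most care on — is the bookkeeping in this overlap analysis: one has to track exactly which letters of which block coincide, use $\tilde p=p$ to transport the constraint from the front of $p$ to the back (or vice versa), and either derive an outright contradiction ($0=1$) or extract a genuinely smaller instance of the same configuration to close the induction. The palindrome hypothesis is what makes the extraction work: an overlap of $0p0$ with $1p1$ of length $\ell$ writes $p=qrs$ in a way that, combined with $\tilde p=p$, pins down a shorter palindrome $p'$ (a border of $p$) together with both $0p'0$ and $1p'1$ as subwords of the overlap region, which is a proper subword of $w$ unless the overlap is all of $w$ — and in that edge case one checks directly that $w$ has length at most $2m+2$ and the two blocks cannot both fit without producing one of the forbidden patterns. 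The base case $|w|$ minimal (equivalently $p$ empty, so $00$ and $11$ both subwords of $w$) is handled by inspection: if $00$ and $11$ both occur then some occurrence of one precedes some occurrence of the other, yielding $00b11$ or $11b00$ with $b$ the block strictly between them (possibly empty, as in $w=0011$), contradicting the hypothesis. Assembling these pieces gives the lemma.
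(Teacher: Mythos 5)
Your opening move is the same as the paper's: the forbidden-subword hypothesis forces an occurrence of $0p0$ and an occurrence of $1p1$ to overlap. But after that your argument does not close, and there is a genuine gap. You hedge between two outcomes of the overlap analysis --- ``either derive an outright contradiction ($0=1$) or extract a genuinely smaller instance of the same configuration to close the induction'' --- and you establish neither. The inductive branch in particular is not justified: you assert that the overlap forces a shorter palindrome $p'$ together with $0p'0$ and $1p'1$ sitting inside a proper subword of $w$, but you never exhibit such a $p'$, never verify the palindromicity, and never verify that both $0p'0$ and $1p'1$ really occur in a \emph{strictly} shorter word. (You also slip when you say the overlap, read as a suffix of $0p0$, ``starts with $0$'': as a suffix of $0p0$ it \emph{ends} with $0$; its first letter is some letter of $p$, and your own parenthetical acknowledges the confusion without resolving it.)

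In fact no induction is needed, because the direct contradiction \emph{always} fires. Write $m:=|p|$ and suppose WLOG that $0p0$ starts at position $i$ and $1p1$ at position $j$ with $i<j$, overlapping in $\ell:=(i+m+2)-j\ge 1$ letters. If $\ell=1$ the shared letter is simultaneously the final $0$ of $0p0$ and the initial $1$ of $1p1$, so assume $\ell\ge 2$, i.e.\ $2\le\ell\le m+1$. Equating the $\ell$-suffix of $0p0$ (namely $p_{m-\ell+2}\cdots p_m\,0$) with the $\ell$-prefix of $1p1$ (namely $1\,p_1\cdots p_{\ell-1}$) gives in particular $p_{m-\ell+2}=1$ and $p_{\ell-1}=0$; but $\tilde p=p$ gives $p_{\ell-1}=p_{m-(\ell-1)+1}=p_{m-\ell+2}$, so $0=1$, a contradiction. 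This is exactly the paper's argument in coordinate form: the paper writes the overlapping configuration as $0d1e0f1$ with $p=d1e=e0f$, so $\tilde p=\tilde e\,1\,\tilde d$, and comparing the letter at position $|e|+1$ of $\tilde e\,1\,\tilde d=e0f$ gives $1=0$. So the lemma is proved by showing the overlap is \emph{impossible}; you should delete the inductive scaffolding (and the base case on $|w|$, which is not doing any work) and simply carry the overlap computation through to the contradiction.
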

\begin{proof}
Recall that $u\prec v$ means that $u$ is a subword of $v$.
Since $0p0$ and $1p1$ are both subwords of $w$, the only alternative is that they occur in an overlapping manner: that is, there are finite words $d,e,f$ such that $0d1e0f1 \prec w$, where $d1e=e0f=p$, or similarly with $0$ and $1$ interchanged. Since $\tilde p = p$, the relation $d1e=e0f=p$ implies $\tilde e 1 \tilde d = e0f$, and since $|\tilde e | = |e|$ we obtain $1=0$, a contradiction. We conclude that the words $0p0$ and $1p1$ cannot overlap, and the result follows.
\end{proof}

\begin{lemma}\label{theys2}
Let $u$ be a finite word which is not balanced. Then there exist words $a,w,b$ such that $a w b \prec u$ and one of the following two possibilities holds: either $\tilde b > a$ and $\tilde w > w$, or $\tilde a > b$ and $w > \tilde w$.
\end{lemma}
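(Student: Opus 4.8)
The plan is to extract from Lemma~\ref{lothaire1} the palindrome that is forced by failure of balancedness, promote it via Lemma~\ref{theys1} to a pattern in which $0p0$ and $1p1$ occur in a definite order, and then read off the two alternatives by a single lexicographic comparison exploiting $\tilde p = p$. Concretely: since $u$ is not balanced, by definition there exist subwords of $u$ of equal length whose numbers of $1$'s differ by at least $2$; I would choose such a pair of minimal common length, naming the one with more $1$'s $u'$ and the other $v'$, so $|u'|_1 \geq 2+|v'|_1$ (this minimum exists because the set of admissible lengths is a nonempty set of positive integers). Lemma~\ref{lothaire1} then gives a palindrome $p$, possibly empty, with $u' = 1p1$ and $v' = 0p0$, and in particular both $0p0$ and $1p1$ are subwords of $u$. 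Applying Lemma~\ref{theys1} with its word taken to be $u$, I obtain a possibly empty word $c$ such that either $0p0c1p1 \prec u$ or $1p1c0p0 \prec u$.

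In the first case I would set $a := 0$, $w := p0c1p$, $b := 1$, so that $awb = 0p0c1p1 \prec u$; here $\tilde b = 1 > 0 = a$. Since $p$ is a palindrome, reversing the concatenation gives $\tilde w = p1\tilde c0p$, so $w = p0c1p$ and $\tilde w = p1\tilde c0p$ first differ at position $|p|+1$, where $w$ has the letter $0$ and $\tilde w$ has the letter $1$; hence $\tilde w > w$, which is exactly the first alternative in the statement. In the second case I would take $a := 1$, $w := p1c0p$, $b := 0$, so $awb = 1p1c0p0 \prec u$ and $\tilde a = 1 > 0 = b$; the same computation now yields $\tilde w = p0\tilde c1p$, and $w = p1c0p$ versus $\tilde w = p0\tilde c1p$ first differ at position $|p|+1$, where $w$ has $1$ and $\tilde w$ has $0$, so $w > \tilde w$. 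This is the second alternative, completing the argument.

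I do not anticipate a real obstacle: once Lemmas~\ref{lothaire1} and \ref{theys1} are available the proof is bookkeeping. The only points needing care are the identities $\widetilde{xy} = \tilde y\,\tilde x$ and $\tilde p = p$, which are exactly what make the comparison at position $|p|+1$ immediate, and the degenerate case $p = \varepsilon$, which causes no difficulty since the words $a$, $w$, $b$ produced above are all nonempty (in fact $|w| \geq 2$), so every lexicographic comparison used is between nonempty words of equal length and is therefore well defined.
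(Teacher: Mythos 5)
Your proof is correct and follows essentially the same route as the paper's: invoke Lemma~\ref{lothaire1} to produce the palindrome $p$, invoke Lemma~\ref{theys1} to obtain $0p0c1p1 \prec u$ or $1p1c0p0 \prec u$, then read off a suboptimal triple from the resulting pattern. The only cosmetic difference is the way you split the subword: you take $a = 0$, $w = p0c1p$, $b = 1$ (so $|a| = |b| = 1$ and the lexicographic comparison $\tilde w$ vs.\ $w$ is made at position $|p|+1$), whereas the paper takes $a = 0p$, $w = 0c1$, $b = p1$ (so $|a| = |b| = |p|+1$ and all comparisons happen at position $1$); both decompositions yield a valid suboptimal triple with the same sign pattern, so the difference is purely a matter of bookkeeping.
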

\begin{proof}
Combining Lemmas \ref{lothaire1} and \ref{theys1} we find that there exist words $p,v$ such that $\tilde p = p$ and either $0p0v1p1 \prec u$, or $1p1v0p0 \prec u$. In the former case we may take $a:=0p$, $b:=p1$ and $w:=0v1$, and in the latter case we may take $a:=1p$, $b:=p0$ and $w:=1v0$.
\end{proof}

Finally, we require the following lemma which characterises those finite words for which all cyclic permutations are balanced. This result appears to be something of a ``folklore theorem'' in the theory of balanced words; to the best of our knowledge, the proof which we present here is original. A version of this result appears as \cite[Thm 6.9]{Ret}.  Note that the word $u:=1001$ is an example of a balanced word with the property that $u^\infty$ is not balanced.

\begin{lemma}\label{cyclic-balanced}
Let $u$ be a nonempty finite word. Then the following are equivalent:
\begin{enumerate}[(i)]
\item
Every cyclic permutation of $u$ is balanced.
\item
The finite word $u^2$ is balanced.
\item
The infinite word $u^\infty$ is balanced.
\end{enumerate}
\end{lemma}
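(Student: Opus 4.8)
The plan is to establish the cycle of implications $(iii)\Rightarrow(ii)\Rightarrow(i)\Rightarrow(iii)$; the first two implications are essentially trivial and the content is in $(i)\Rightarrow(iii)$. For $(iii)\Rightarrow(ii)$, observe that $u^2$ is a subword of $u^\infty$, and every subword of a balanced word is balanced. For $(ii)\Rightarrow(i)$, note that if $v\simeq u$ then $v$ is a subword of $u^2$ (writing $u=ab$ and $v=ba$, the word $v$ occurs starting at position $|a|+1$ inside $uu=abab$), so $v$ inherits balancedness from $u^2$. The real work is the implication $(i)\Rightarrow(iii)$, which I would prove by contraposition: assuming $u^\infty$ is not balanced, I will exhibit a cyclic permutation of $u$ which is not balanced.

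So suppose $u^\infty$ is not balanced. Then some finite subword $w\prec u^\infty$ is not balanced, and by Lemma~\ref{theys2} there exist words $a,w',b$ with $awb\prec w\prec u^\infty$ such that either $\tilde b>a$ and $\tilde{w'}>w'$, or $\tilde a>b$ and $w'>\tilde{w'}$; by symmetry (replacing $u$ by its reverse changes the roles, or one simply handles both cases identically) assume we are in the first case, so $|awb|\ge 1$ and in particular $awb$ is a genuine subword of the periodic word $u^\infty$. Let $q=|u|$. Since $awb$ occurs in $u^\infty$, by taking a cyclic rotation $u'\simeq u$ we may assume $awb$ occurs as a prefix of $(u')^\infty$ — more precisely, that $awb$ is a subword of $(u')^m$ for a suitable $m$ with the occurrence aligned so that analysing it reduces to analysing $(u')^\infty$ starting at position $0$. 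The goal is then to derive from $\tilde b > a$ and $\tilde w' > w'$ a violation of the balance condition inside some single cyclic permutation of $u$, i.e. inside a subword of $(u')^2$ of length $q$.

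The key mechanism I expect to use is this: from $\tilde b>a$ (with $|a|=|b|$) one gets that the reversed-reading of the block is "lexicographically heavier" than the forward-reading, and combined with $\tilde{w'}>w'$ one can compare the two length-$|awb|$ windows obtained by reading $awb$ forwards versus reading the reversed block $\tilde b\tilde{w'}\tilde a$ — both of these, up to cyclic rotation, appear among the cyclic permutations of $u$ when $|awb|\le q$, and their $1$-counts differ by at least the accumulated lexicographic discrepancies, forcing some pair of equal-length subwords of a single cyclic permutation to have $1$-counts differing by at least $2$. Handling the case $|awb|>q$ requires noting that $u^\infty$ has period $q$, so any subword of length $>q$ already contains two disjoint translates by $q$ of some pattern, and one can shrink $awb$ to a minimal unbalanced witness, which Lemma~\ref{lothaire1} pins down as $1p1$ versus $0p0$ for a palindrome $p$ with $|p|+2\le q$ (if the minimal witness had length $>q$, periodicity of $u^\infty$ would let us find a shorter one). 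With that reduction in hand, $0p0$ and $1p1$ are subwords of $u^\infty$ of length at most $q$, hence each is (a subword of) a cyclic permutation of $u$; a short argument using the palindrome structure and Lemma~\ref{theys1} then shows that the single cyclic permutation of $u$ read off from the appropriate window of $(u')^2$ simultaneously contains both $0p0$ and $1p1$, contradicting $(i)$.

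The main obstacle I anticipate is the bookkeeping in the last step: making precise that a minimal unbalanced witness $1p1$, $0p0$ in $u^\infty$ can be captured, together with enough surrounding context, inside a \emph{single} length-$q$ cyclic window of $u$, and in particular ruling out the possibility that the two blocks $1p1$ and $0p0$ are "too far apart" in $u^\infty$ to both fit in one period. Here one should use that $|1p1|=|p|+2\le q$ together with the fact that in a word with period $q$, if a pattern of length $\ell\le q$ occurs, it occurs within every window of length $q+\ell-1$; choosing the window to start just before the first occurrence of (say) $1p1$ and invoking recurrence/periodicity to locate an occurrence of $0p0$ within the next $q$ symbols should close the argument. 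The rest is routine.
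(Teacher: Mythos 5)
Your strategy differs from the paper's: you prove the contrapositive of $(i)\Rightarrow(iii)$ directly inside $u^\infty$, whereas the paper first proves $(i)\Rightarrow(ii)$ by a minimality argument inside the \emph{finite} word $u^2$, and then bootstraps $(ii)\Rightarrow(iii)$ by an induction on the powers $u^{2^k}$, using the observation that the cyclic permutations of $u^2$ are exactly the words $v^2$ with $v\simeq u$. Your route is plausible, and as a side remark your sequence $(iii)\Rightarrow(ii)\Rightarrow(i)$ and $(i)\Rightarrow(iii)$ is logically fine; but the sketch of the critical last step of $(i)\Rightarrow(iii)$ is not correct as written, and it is precisely the part you dismiss as ``routine bookkeeping.''

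Concretely, once you have the minimal violating pair $a=1p1$, $b=0p0$ of length $\ell:=|p|+2\le q:=|u|$, you propose to find one length-$q$ window of $u^\infty$ containing both by starting just before an occurrence of $1p1$ and ``invoking recurrence/periodicity to locate $0p0$ within the next $q$ symbols.'' The periodicity fact you cite guarantees only that $0p0$ occurs in every window of length $q+\ell-1$, which is strictly longer than $q$; a length-$q$ window that contains $1p1$ need not contain $0p0$. This is not a bookkeeping detail but the crux: the paper's proof closes it by writing $ac\simeq u$ with $a=1p1$, introducing the companion word $d$ with $bd\simeq u$, noting $|d|_1-|c|_1=2$, and \emph{using the minimality of $(a,b)$} to conclude $|c|\ge|a|$; this length inequality is what makes $b$ occur inside $cac$, hence inside $ca$ or $ac$, each a cyclic permutation of $u$ containing $a$. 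Without that minimality-plus-rearrangement step, the conclusion does not follow. A secondary issue: your justification of the bound $|p|+2\le q$ (``periodicity would let us find a shorter one'') is also not quite how the argument goes — one derives a direct contradiction from period $q$ (either $p_q$ would have to equal both $0$ and $1$, or, in the boundary case $|p|+2=q+1$, the words $1p$ and $0p$ would be cyclic permutations of $u$ with different $1$-counts) — but that part is readily repairable, whereas the missing minimality argument is not.
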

\begin{proof}
It is clear that (iii)$\implies$(ii)$\implies$(i). To prove the implication (i)$\implies$(ii) by we shall show that if $u$ is a nonempty finite word such that $u^2$ is not balanced, then there is a cyclic permutation of $u$ which is not balanced.

Let us then suppose that $u$ is a finite nonempty word such that $u^2$ is  not balanced. Let $a,b$ be subwords of $u^2$ of equal length such that $||a|_1-|b|_1|\geq 2$, and suppose that no pair of shorter subwords may be found which also has this property. Clearly we have $||a|_1-|b|_1|=2$, and without loss of generality we shall assume that $|a|_1=2+|b|_1$. By Lemma~\ref{lothaire1} there exists a palindrome $p$ such that $a=1p1$ and $b=0p0$, and it follows from Lemma~\ref{theys1} that $|a|,|b| \leq |u|$. We may therefore choose words $c$ and $d$ such that $|c|=|d| = |u|-|a|=|u|-|b|$ and $ac \simeq bd \simeq u$. Since $|ac|_1=|bd|_1=|u|_1$ we have $|d|_1=2+|c|_1$, and since $a$ and $b$ are the shortest words with this property we must have $|b|=|a|\leq |c|$. Now, since $ac \simeq u$, it is not difficult to see that every word which is a subword of some cyclic permutation of $u$ and has length at most $|c|$ must occur as a subword of the word $cac$. In particular $b \prec cac$, and since $|b|=|a|$ we have either $b \prec ca$ or $b \prec ac$. In either case we have shown that there exists a cyclic permutation of $u$ which has both $a$ and $b$ as subwords, and no word with that property may be balanced. We conclude that (i) cannot not hold when (ii) does not hold, and so (i)$\implies$(ii) as required.

It is now straightforward to show that (ii)$\implies$(iii). Let $u$ be a finite nonempty word such that $u^2$ is balanced; then every cyclic permutation of $u$ is balanced, since the cyclic permutations of $u$ are precisely the subwords of $u^2$ with length $|u|$. Now, the cyclic permutations of $u^2$ are precisely the words of the form $v^2$ where $v \simeq u$; but since (i)$\implies$(ii), all of these cyclic permutations must be balanced also. Applying the implication (i)$\implies$(ii) again we deduce that $u^4$ is balanced. Repeating this procedure inductively shows that $u^{2^k}$ is balanced for every $k \geq 1$, and this yields (iii).\end{proof}

\section{Relationships between balanced words and extremal orbits}\label{sec5}

The principal goal of this section is to show that for each $\alpha \in (0,1]$, every recurrent $x \in \Sigma$ which is strongly extremal for $\mathsf{A}_\alpha$ is balanced. We also prove some related ancillary results which will be applied in the following section.

The following valuable lemma shows that under quite mild conditions the trace, spectral radius, Euclidean norm and smallest diagonal element of a matrix of the form $\mathcal{A}(u)$ approximate each other quite closely. For every $B \in \Mat$ we define $\mathfrak{d}(B)$ to be the minimum modulus of the diagonal entries of $B$.
\begin{lemma}\label{rho-norm}
Let $\alpha \in [0,1]$ and $N \geq 2$, and let $u$ be a nonempty finite word such that $0^N, 1^N \nprec u$. Then,
\[\frac{1}{2N^2}\Lvvv \mathcal{A}^{(\alpha)}(u)\Rvvv \leq \mathfrak{d}\left(\mathcal{A}^{(\alpha)}(u)\right) \leq \frac{1}{2}\tr \mathcal{A}^{(\alpha)}(u) \leq \rho\left(\mathcal{A}^{(\alpha)}(u)\right) \leq \Lvvv\mathcal{A}^{(\alpha)}(u)\Rvvv.\]
\end{lemma}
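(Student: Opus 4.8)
The plan is to reduce to $\alpha=1$, dispatch the three rightmost inequalities by elementary $2\times2$ linear algebra, and deduce the leftmost one from the continuant (continued-fraction) structure of products of $A_0$ and $A_1$. Since $A^{(\alpha)}_1=\alpha A_1$ we have $\mathcal{A}^{(\alpha)}(u)=\alpha^{|u|_1}\mathcal{A}(u)$, and each of $\vvv\cdot\vvv$, $\mathfrak{d}$, $\tr$, $\rho$ is homogeneous of degree one under multiplication by a nonnegative scalar, so multiplying the asserted chain for $\mathcal{A}(u)$ through by $\alpha^{|u|_1}\geq 0$ produces the chain for $\mathcal{A}^{(\alpha)}(u)$; hence it suffices to take $\alpha=1$, and I write $B:=\mathcal{A}(u)$. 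As a product of $A_0$ and $A_1$, the matrix $B$ has nonnegative integer entries and determinant $1$, and the contribution of the constant path (or an easy induction on $|u|$) shows $B_{11},B_{22}\geq 1$, so $\tr B\geq 2$. Then $\rho(B)\leq\vvv B\vvv$ for any submultiplicative norm; $\mathfrak{d}(B)=\min(B_{11},B_{22})\leq\frac12(B_{11}+B_{22})=\frac12\tr B$; and because $\det B=1$ and $\tr B\geq 2$ the eigenvalues of $B$ are a real pair $\lambda\geq 1\geq\lambda^{-1}$ with $\rho(B)=\lambda\geq\frac12(\lambda+\lambda^{-1})=\frac12\tr B$. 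This settles the three rightmost inequalities.

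For the leftmost inequality it is enough to prove $\max(B)\leq N^2\,\mathfrak{d}(B)$, where $\max(B)$ is the largest entry of $B$, because $\vvv B\vvv^2=\rho(B^*B)\leq\tr(B^*B)=\sum_{i,j}B_{ij}^2\leq 4\max(B)^2$ gives $\vvv B\vvv\leq 2\max(B)$. Conjugation by $J:=\left(\begin{smallmatrix}0&1\\1&0\end{smallmatrix}\right)$ interchanges $A_0$ and $A_1$, hence replaces $u$ by its complement while preserving $\vvv\cdot\vvv$, $\mathfrak{d}$ and the hypothesis $0^N,1^N\nprec u$; so we may assume $u$ begins with a run of $0$'s, and write $u=0^{x_1}1^{x_2}0^{x_3}\cdots$ as a concatenation of $\ell\geq 1$ maximal runs of lengths $x_1,\dots,x_\ell\in\{1,\dots,N-1\}$. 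Building the product run by run and recording the two rows of each partial product, a run $0^{x_i}$ adds $x_i$ times the bottom row to the top row while a run $1^{x_i}$ adds $x_i$ times the top row to the bottom row; a short induction then shows that after $i$ runs the two rows are $U_{i-1}$ and $U_i$, where $U_{-1}=(1,0)$, $U_0=(0,1)$ and $U_n=x_nU_{n-1}+U_{n-2}$.

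Comparing this with the continuant recursion $K(\,)=1$, $K(y_1,\dots,y_m)=y_1K(y_2,\dots,y_m)+K(y_3,\dots,y_m)=y_mK(y_1,\dots,y_{m-1})+K(y_1,\dots,y_{m-2})$, one obtains $U_n=(K(x_2,\dots,x_n),K(x_1,\dots,x_n))$ for $n\geq 1$, and so, according to the parity of $\ell$,
\begin{align*}
B&=\begin{pmatrix}K(x_2,\dots,x_{\ell-1})&K(x_1,\dots,x_{\ell-1})\\K(x_2,\dots,x_\ell)&K(x_1,\dots,x_\ell)\end{pmatrix}\quad(\ell\text{ even}),\\
B&=\begin{pmatrix}K(x_2,\dots,x_\ell)&K(x_1,\dots,x_\ell)\\K(x_2,\dots,x_{\ell-1})&K(x_1,\dots,x_{\ell-1})\end{pmatrix}\quad(\ell\text{ odd}),
\end{align*}
with the convention that a continuant of an empty block is $1$ (this covers $\ell=2$; for $\ell=1$ one has $B=A_0^{x_1}$ and the bound is immediate). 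Since continuants do not decrease when a block is adjoined, the largest entry of $B$ is $K(x_1,\dots,x_\ell)$ in both cases, whereas $\mathfrak{d}(B)=K(x_2,\dots,x_{\ell-1})$ when $\ell$ is even and $\mathfrak{d}(B)=\min\{K(x_2,\dots,x_\ell),K(x_1,\dots,x_{\ell-1})\}$ when $\ell$ is odd.

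Finally, the continuant identities together with monotonicity yield $K(y_1,\dots,y_m)\leq(y_1+1)K(y_2,\dots,y_m)$ and $K(y_1,\dots,y_m)\leq(y_m+1)K(y_1,\dots,y_{m-1})$, and since $x_i\leq N-1$ we get, for $\ell$ even,
\[
\max(B)=K(x_1,\dots,x_\ell)\leq(x_1+1)\,K(x_2,\dots,x_\ell)\leq(x_1+1)(x_\ell+1)\,\mathfrak{d}(B)\leq N^2\,\mathfrak{d}(B),
\]
and for $\ell$ odd, from $K(x_1,\dots,x_\ell)\leq(x_1+1)K(x_2,\dots,x_\ell)$ and $K(x_1,\dots,x_\ell)\leq(x_\ell+1)K(x_1,\dots,x_{\ell-1})$, that $\max(B)\leq N\,\mathfrak{d}(B)\leq N^2\,\mathfrak{d}(B)$. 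Hence $\vvv B\vvv\leq 2\max(B)\leq 2N^2\,\mathfrak{d}(B)$, completing the chain. I expect the only substantive point to be the combinatorial step identifying the entries of $\mathcal{A}(u)$ with continuants of blocks of $u$, and the parity bookkeeping it entails — this is exactly where the hypothesis that $u$ has no run of length $N$ enters and where the factor $N^2$ arises — after which the inequalities are routine.
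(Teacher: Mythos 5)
Your proof is correct and follows essentially the same route as the paper's: reduce to $\alpha=1$ by homogeneity, handle the three rightmost inequalities by elementary $2\times2$ algebra using $\det=1$ and $\tr\geq 2$, and then control $\max(B)/\mathfrak{d}(B)$ by $N^2$ via the continued-fraction structure of the product. The only cosmetic differences are bookkeeping ones: the paper writes the matrix entries as convergent numerators/denominators $p_k,q_k$ and normalizes so the last symbol of $u$ is $0$, whereas you use continuant notation $K(\cdot)$ and normalize so the first symbol is $0$ — the underlying inequalities $q_n\leq (a_n+1)q_{n-1}$ and $q_n\leq N p_n$ are exactly your $K(x_1,\dots,x_\ell)\leq (x_\ell+1)K(x_1,\dots,x_{\ell-1})$ and $K(x_1,\dots,x_\ell)\leq (x_1+1)K(x_2,\dots,x_\ell)$.
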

\begin{proof}
Let $\mathfrak{m}(B)$ denote the maximum of the entries of a non-negative matrix $B \in \Mat$. The inequalities
\[\vvv B\vvv = \sqrt{\rho(B^*B)} \leq \sqrt{\tr (B^*B)} \leq 2\mathfrak{m}(B)\]
and
\[\mathfrak{d}(B) \leq \frac{1}{2}\tr B \leq \rho(B) \leq \vvv B\vvv\]
are elementary. To prove the lemma, it therefore suffices to show that $\mathfrak{m}\left(\mathcal{A}^{(\alpha)}(u)\right) \leq N^2 \mathfrak{d}(\mathcal{A}^{(\alpha)}(u))$ whenever $0^N, 1^N \nprec u$. Since $\mathcal{A}^{(\alpha)}(u)\equiv \alpha^{|u|_1}\mathcal{A}(u)$ it is clearly sufficient to consider only the case $\alpha=1$.

Let us prove this inequality. We shall suppose that the final symbol occurring in $u$ is $0$, since the opposite case is easily dealt with by symmetry. Let $n \geq 1$ and $a_1,\ldots,a_n \geq 1$ be integers such that either $u=0^{a_n}1^{a_{n-1}}0^{a_{n-2}}\cdots1^{a_2}0^{a_1}$ with $n$ odd, or $u=1^{a_n}0^{a_{n-1}}1^{a_{n-2}}\cdots1^{a_2}0^{a_1}$ with $n$ even. By hypothesis we have $a_k \leq N-1$ for every $k$.

For $1 \leq k \leq n$ let us define
 \[\frac{p_k}{q_k}:=
\cfrac{1}{a_1+
\cfrac{1}{a_2+\dotsb+
\cfrac{1}{a_{k-1}+
\cfrac{1}{a_k}
}}}
\]
in least terms, and define also $p_0,q_{-1}:=0$ and $p_{-1},q_0:=1$. The integers $p_k, q_k$ then satisfy the recurrence relations $p_{k}=a_{k}p_{k-1}+p_{k-2}$ and $q_{k}=a_{k}q_{k-1}+q_{k-2}$ for all $k$ in the range $1\leq k\leq n$. A well-known formula for $p_k/q_k$ implies
\[
\mathcal{A}(u)=A_0^{a_n}A_1^{a_{n-1}}\cdots A_0^{a_1} = \left(\begin{array}{cc}1&a_n\\0&1\end{array}\right) \left(\begin{array}{cc}1&0\\a_{n-1}&1\end{array}\right)\cdots \left(\begin{array}{cc}1&a_1\\0&1\end{array}\right)= \left(\begin{array}{cc}p_n&q_n\\p_{n-1}&q_{n-1}\end{array} \right)\]
if $n$ is odd, and
\[
\mathcal{A}(u)=A_1^{a_n}A_0^{a_{n-1}}\cdots A_0^{a_1} = \left(\begin{array}{cc}1&0\\a_n&1\end{array}\right)
\left(\begin{array}{cc}1&a_{n-1}\\0&1\end{array}\right)\cdots \left(\begin{array}{cc}1&a_1\\0&1\end{array}\right)= \left(\begin{array}{cc}p_{n-1}&q_{n-1}\\p_{n}&q_{n} \end{array}\right)
\]
if $n$ is even (see, e.g., \cite{Frame}). If $n$ is odd then clearly $\mathfrak{d}(\mathcal{A}(u))=\min\{p_n,q_{n-1}\}$, and since $q_n=a_{n}q_{n-1}+q_{n-2} \leq (a_{n}+1)q_{n-1} \leq Nq_{n-1}$ and $p_{n}/q_{n} \geq 1/(a_1+1) \geq 1/N$ we obtain $\mathfrak{m}(\mathcal{A}(u))=q_n \leq \min\{Np_n,Nq_{n-1}\}< N^2 \mathfrak{d}(\mathcal{A}(u))$ as required. If $n$ is even then similarly $\mathfrak{m}(\mathcal{A}(u))=q_n \leq N q_{n-1} \leq N^2 p_{n-1}=N^2\mathfrak{d}(\mathcal{A}(u))$. The proof is complete.
\end{proof}

Let $a,w,b$ be nonempty finite words with $|a|=|b|$. We shall say that $(a,w,b)$ is a \emph{suboptimal triple} if either $\tilde a > b$ and $w > \tilde w$, or $\tilde b > a$ and $\tilde w > w$.
We require the following lemma due to V. Blondel, J. Theys and A. Vladimirov \cite[Lemma 4.2]{BTV}:
\begin{lemma}\label{beeteevee}
Let $w$ be a nonempty finite word. Then $\mathcal{A}(\tilde w) - \mathcal{A}(w) = k(w)J$, where $k(w) \in \mathbb{Z}$ and
\[J := A_0A_1 - A_1A_0 = \left(\begin{array}{cc}1&0\\0&-1\end{array}\right).\]
Moreover, $k(w)$ is positive if and only if $w > \tilde w$, and negative if and only if $w < \tilde w$.
\end{lemma}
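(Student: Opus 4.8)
The plan is to exploit the transpose–reversal symmetry of the matrices $A_0,A_1$ together with an induction on $|w|$. The basic observation is that $A_0^T = A_1$ and $A_1^T = A_0$, so that for any word $w=w_1\cdots w_n$ we have
\[
\mathcal{A}(w)^T = \bigl(A_{w_n}\cdots A_{w_1}\bigr)^T = A_{w_1}^T\cdots A_{w_n}^T = A_{1-w_1}\cdots A_{1-w_n}.
\]
This is not quite $\mathcal{A}(\tilde w)$, but it becomes so after the substitution $0\leftrightarrow 1$; to turn this into a useful identity I would conjugate by $J$, using that $JA_0J = A_1$ and $JA_1J=A_0$ (a one-line check, since $J=\mathrm{diag}(1,-1)$). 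Hence $J\mathcal{A}(w)^TJ = A_{w_1}\cdots A_{w_n} = \mathcal{A}(\tilde w)$. So the lemma is equivalent to the assertion that $\mathcal{A}(\tilde w)-\mathcal{A}(w) = J\mathcal{A}(w)^TJ - \mathcal{A}(w)$ is always an integer multiple of $J$, with the stated sign.

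First I would handle the "multiple of $J$" part. Every $\mathcal{A}(w)$ is an integer matrix of determinant $1$; writing $\mathcal{A}(w)=\left(\begin{smallmatrix}a&b\\c&d\end{smallmatrix}\right)$ with $ad-bc=1$, one computes directly that $J\mathcal{A}(w)^TJ = \left(\begin{smallmatrix}a&-c\\-b&d\end{smallmatrix}\right)$, so
\[
\mathcal{A}(\tilde w)-\mathcal{A}(w) = \begin{pmatrix}0 & -c-b\\ -b-c & 0\end{pmatrix}.
\]
Wait—that is off-diagonal, not a multiple of $J$. The resolution is that one must also use that $\mathcal{A}(\tilde w)$ and $\mathcal{A}(w)$ share the same diagonal (since $\tilde w$ and $w$ give products that are transposes-up-to-$J$-conjugation, the diagonal entries of $J M^T J$ equal those of $M$), together with the additional structural fact, provable by induction, that for these particular generators $b=c$ forces... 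Actually the cleaner route, and the one I would commit to, is a direct induction: show $\mathcal{A}(\tilde w)-\mathcal{A}(w)=k(w)J$ by inducting on $n=|w|$. The base case $n=1$ gives $\tilde w=w$ and $k(w)=0$. For the inductive step write $w=iw'$ where $i\in\{0,1\}$ and $w'$ has length $n-1$; then $\mathcal{A}(w)=\mathcal{A}(w')A_i$ and $\mathcal{A}(\tilde w)=A_i\mathcal{A}(\widetilde{w'})$. Peeling a symbol off the other end too (write $w=iw''j$) lets one set up the recursion so that the "error" $\mathcal{A}(\tilde w)-\mathcal{A}(w)$ is expressed via $\mathcal{A}(\widetilde{w''})-\mathcal{A}(w'')=k(w'')J$ plus correction terms of the form $A_i(\cdot)A_j - A_j(\cdot)A_i$; using $A_0XA_1-A_1XA_0$-type identities and that $J$ anticommutes appropriately with the off-diagonal part, the correction is again an integer multiple of $J$. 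Keeping track of integrality is automatic since all matrices involved are integer matrices. I expect the bookkeeping here—choosing the right peeling and verifying the correction terms land in $\mathbb{Z}J$—to be the main obstacle, though it is purely mechanical $2\times 2$ algebra.

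Finally, for the sign statement I would argue as follows. Having established $\mathcal{A}(\tilde w)-\mathcal{A}(w)=k(w)J$, note $k(w)=-k(\tilde w)$ (apply the identity to $\tilde w$), so it suffices to show $w>\tilde w \implies k(w)\geq 0$ and then rule out $k(w)=0$ when $w\neq\tilde w$ to get strict positivity — actually the cleanest formulation is: $k(w)>0 \iff w>\tilde w$, $k(w)=0\iff w=\tilde w$, $k(w)<0\iff w<\tilde w$, and these three cases are mutually exclusive and exhaustive, so it is enough to prove the first equivalence in one direction plus $w=\tilde w\Rightarrow k(w)=0$ (immediate) and a strictness check. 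For the implication $w>\tilde w\Rightarrow k(w)>0$, I would again induct, comparing $w$ and $\tilde w$ at their first point of difference: if $w=pv$ and $\tilde w=\tilde v\tilde p$ agree on a prefix, factor out the common structure and reduce to the case where $w$ begins $0\cdots$ and $\tilde w$ begins $1\cdots$, i.e. $w$ starts with $0$ and ends with $1$; then an explicit computation of the $(1,1)$-entry of $\mathcal{A}(\tilde w)$ minus that of $\mathcal{A}(w)$, using positivity of all entries of products of $A_0,A_1$ and the fact that prepending/appending $A_1$ versus $A_0$ strictly increases the relevant entry, yields $k(w)>0$. Since this can be found in \cite{BTV}, I would present the induction in compressed form and refer the reader there for the sign bookkeeping, emphasizing only the symmetry identity $J\mathcal{A}(w)^TJ=\mathcal{A}(\tilde w)$, which is the conceptual heart of the statement.
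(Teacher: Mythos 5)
The conceptual idea here is the right one --- exploit the symmetry that interchanges $\mathcal{A}(w)$ and $\mathcal{A}(\tilde w)$ via a transpose and a fixed conjugation --- but you have identified the wrong conjugating matrix, and the result is an identity that is simply false. You assert $JA_0J = A_1$; in fact $JA_0J = \left(\begin{smallmatrix}1 & -1\\0&1\end{smallmatrix}\right) = A_0^{-1}$, not $A_1$, since $J = \mathrm{diag}(1,-1)$ only changes off-diagonal signs. Consequently $J\mathcal{A}(w)^T J \neq \mathcal{A}(\tilde w)$, and the off-diagonal matrix you obtain at the ``Wait'' moment is not a near miss in need of extra structure --- it is the symptom of the wrong symmetry. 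The matrix that actually implements the $0\leftrightarrow 1$ swap is $R := \left(\begin{smallmatrix}0&1\\1&0\end{smallmatrix}\right)$ (used elsewhere in the paper, in the proof that $S(\gamma)=S(1-\gamma)$): one checks $RA_0R = A_1$ and $RA_1R=A_0$, and since $A_0^T = A_1$, $A_1^T=A_0$, this gives $RA_i^TR = A_i$ for $i=0,1$, hence $R\,\mathcal{A}(w)^T R = \mathcal{A}(\tilde w)$. Writing $\mathcal{A}(w) = \left(\begin{smallmatrix}a & b\\c & d\end{smallmatrix}\right)$ one then computes $R\,\mathcal{A}(w)^T R = \left(\begin{smallmatrix}d & b\\c & a\end{smallmatrix}\right)$, so $\mathcal{A}(\tilde w)-\mathcal{A}(w) = (d-a)J$ immediately, with $k(w)=d-a\in\mathbb{Z}$ because all matrices involved have integer entries. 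So the ``multiple of $J$'' part, which you were trying to push through a murky two-ended induction, actually falls out in one line once the correct matrix is used; the off-diagonal result you obtained should have been the signal to re-examine the conjugation, not to abandon the symmetry route.

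On the sign statement: as in your sketch, this still requires a separate argument (e.g.\ induction on the first position where $w$ and $\tilde w$ differ, combined with monotonicity of matrix entries under multiplication by $A_0$ versus $A_1$), and your description of it is too compressed to assess; but note that the paper itself gives no proof of this lemma --- it is quoted from Blondel--Theys--Vladimirov --- so there is no paper proof to compare against for that portion. If you do write out the sign argument, it is worth checking the direction carefully against the paper's conventions for $\mathcal{A}$ (products are taken right-to-left) and for lexicographic order, since the sign that naturally comes out of $k(w)=d-a$ in these conventions is easy to get backwards.
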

The following is a slightly strengthened version of \cite[Lemma~4.3]{BTV}:

\begin{lemma}\label{sot2}
Let $(a,w,b)$ be a suboptimal triple, let $B_1,B_2$ be non-negative matrices, and let $\alpha \in [0,1]$. Then
\[\tr\left(B_1 \mathcal{A}^{(\alpha)}(a \tilde w b)B_2\right)\geq \tr\left(B_1\mathcal{A}^{(\alpha)}(a w b)B_2\right) + \alpha^{|a w b|_1}\mathfrak{d}(B_1)\mathfrak{d}(B_2).\]
\end{lemma}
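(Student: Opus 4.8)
The plan is to reduce the statement to a direct application of Lemma~\ref{beeteevee}, which controls the difference $\mathcal{A}(w)-\mathcal{A}(\tilde w)$, together with the sign information $w>\tilde w \iff k(w)>0$ supplied there, and then to carefully account for the extra factor $\alpha^{|awb|_1}$ and the two flanking nonnegative matrices $B_1,B_2$. First I would factor out the scalar powers of $\alpha$: since $\mathcal{A}^{(\alpha)}(v)=\alpha^{|v|_1}\mathcal{A}(v)$ for any word $v$, and since $|a\tilde w b|_1=|awb|_1$ (reversal preserves the number of $1$'s), both products $\mathcal{A}^{(\alpha)}(a\tilde w b)$ and $\mathcal{A}^{(\alpha)}(awb)$ equal $\alpha^{|awb|_1}$ times the corresponding $\alpha=1$ matrix. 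Hence it suffices to prove
\[
\tr\bigl(B_1 \mathcal{A}(a\tilde w b) B_2\bigr)\geq \tr\bigl(B_1\mathcal{A}(awb) B_2\bigr)+\mathfrak{d}(B_1)\mathfrak{d}(B_2),
\]
and then multiply through by $\alpha^{|awb|_1}\geq 0$.

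Next I would compute the matrix difference $\mathcal{A}(a\tilde w b)-\mathcal{A}(awb)$. Using the definition $\mathcal{A}(uv)=\mathcal{A}(v)\cdot\text{(something)}$—more precisely, since $\mathcal{A}$ reverses the order of concatenation, $\mathcal{A}(awb)=\mathcal{A}(b)\mathcal{A}(w)\mathcal{A}(a)$ and $\mathcal{A}(a\tilde w b)=\mathcal{A}(b)\mathcal{A}(\tilde w)\mathcal{A}(a)$—we get
\[
\mathcal{A}(a\tilde w b)-\mathcal{A}(awb)=\mathcal{A}(b)\bigl(\mathcal{A}(\tilde w)-\mathcal{A}(w)\bigr)\mathcal{A}(a)=-k(w)\,\mathcal{A}(b)\,J\,\mathcal{A}(a)
\]
by Lemma~\ref{beeteevee}, where $J=\mathrm{diag}(1,-1)$. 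Therefore
\[
\tr\bigl(B_1\mathcal{A}(a\tilde w b)B_2\bigr)-\tr\bigl(B_1\mathcal{A}(awb)B_2\bigr)=-k(w)\,\tr\bigl(B_1\mathcal{A}(b)J\mathcal{A}(a)B_2\bigr).
\]
Now I would split the suboptimal triple hypothesis into its two cases. In the case $\tilde a>b$ and $w>\tilde w$: by Lemma~\ref{beeteevee}, $w>\tilde w$ gives $k(w)>0$, i.e.\ $k(w)\geq 1$; and applying Lemma~\ref{beeteevee} to the word $a$ (noting $\tilde a>b$ does not directly give a sign, so I instead need) — here is the subtle point — I should instead use $\tilde a>b$ to compare $\mathcal{A}(a)$ and $\mathcal{A}(b)$: write $\mathcal{A}(b)-\mathcal{A}(\tilde a)=-k'\,J$ type relations are not immediate since $a,b$ are unrelated by reversal. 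The right move is rather to estimate $\tr(B_1\mathcal{A}(b)J\mathcal{A}(a)B_2)$ directly in terms of the entries of the nonnegative matrices $\mathcal{A}(a),\mathcal{A}(b),B_1,B_2$, using only that $\tilde a>b$ forces a strict inequality between specific entries (first-column vs.\ reversed structure) of $\mathcal{A}(a)$ and $\mathcal{A}(b)$.

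The main obstacle, and the heart of the argument, is exactly this entrywise estimate: showing that $-k(w)\tr(B_1\mathcal{A}(b)J\mathcal{A}(a)B_2)\geq \mathfrak{d}(B_1)\mathfrak{d}(B_2)$ in each of the two cases. I expect the key identity to be that for nonnegative $2\times 2$ matrices $P,Q$ and $B_1,B_2$, $\tr(B_1 P J Q B_2)$ expands as a signed bilinear combination of entries, and the inequalities $\tilde a>b$ (resp.\ $\tilde b>a$) translate—via the continuant/Stern–Brocot representation of $\mathcal{A}(a)$ recalled in the proof of Lemma~\ref{rho-norm}, or via a short lexicographic argument—into sign conditions on the relevant $2\times 2$ minors, which combined with $|k(w)|\geq 1$ and $\mathfrak{d}(B_i)\leq$ (diagonal entries of $B_i$) yield the claimed lower bound $\mathfrak{d}(B_1)\mathfrak{d}(B_2)$. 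I would organize this as: (1) reduce to $\alpha=1$; (2) express the trace difference via Lemma~\ref{beeteevee} as $-k(w)\tr(B_1\mathcal{A}(b)J\mathcal{A}(a)B_2)$; (3) expand this trace entrywise and isolate the contribution of the diagonal entries of $B_1$ and $B_2$; (4) in each of the two cases of the suboptimal-triple definition, use the lexicographic inequalities on $a,b$ and the sign of $k(w)$ from Lemma~\ref{beeteevee} to show the bracketed quantity dominates $\mathfrak{d}(\mathcal{A}(b))\mathfrak{d}(\mathcal{A}(a))\geq 1$ in the appropriate corner, and conclude; (5) reinstate the factor $\alpha^{|awb|_1}$. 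Since this is stated as a "slight strengthening" of \cite[Lemma~4.3]{BTV}, I would follow the BTV computation closely, the strengthening being precisely the presence of the flanking $B_1,B_2$ and the explicit additive gap $\mathfrak{d}(B_1)\mathfrak{d}(B_2)$ rather than mere inequality.
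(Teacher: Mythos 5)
Your outline has the right shape: reduce to $\alpha=1$ by pulling out the scalar $\alpha^{|awb|_1}$; write the trace difference as $\tr\left(B_1\,\mathcal{A}(b)\bigl(\mathcal{A}(\tilde w)-\mathcal{A}(w)\bigr)\mathcal{A}(a)\,B_2\right)$; invoke Lemma~\ref{beeteevee} to replace $\mathcal{A}(\tilde w)-\mathcal{A}(w)$ by $k(w)J$; split into the two cases of the suboptimal-triple definition; and then lower-bound. That is indeed the route the paper takes. But the step you yourself flag as ``the heart of the argument'' --- your step~(4) --- is precisely the step you do not carry out, and the devices you gesture at there (an unspecified ``entrywise estimate'', ``sign conditions on the relevant $2\times 2$ minors'', the continuant representation from Lemma~\ref{rho-norm}) are not what makes the proof go.

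The paper's argument at that point is structural, not entrywise. Since $|a|=|b|$ and $\tilde a>b$, one matches the common lexicographic prefix and writes $a=u\,1\,c$, $b=\tilde c\,0\,\tilde v$ for possibly empty words $u,v,c$. The identities $A_0JA_0=A_1JA_1=J$ then yield the telescoping collapse $\mathcal{A}(\tilde c)\,J\,\mathcal{A}(c)=J$, which reduces $\mathcal{A}(b)\,J\,\mathcal{A}(a)$ to an explicit constant $2\times 2$ matrix --- $A_0JA_1=\left(\begin{smallmatrix}0&-1\\-1&-1\end{smallmatrix}\right)$ in this case, or $A_1JA_0=\left(\begin{smallmatrix}1&1\\1&0\end{smallmatrix}\right)$ in the other --- flanked by the non-negative unimodular integer matrices $\mathcal{A}(u)$ and $\mathcal{A}(\tilde v)$. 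Combined with the sign of $k(w)$ from Lemma~\ref{beeteevee}, which in each of the two cases pairs with the sign of that constant factor, this shows that $\mathcal{A}(b)\bigl(\mathcal{A}(\tilde w)-\mathcal{A}(w)\bigr)\mathcal{A}(a)$ is a \emph{non-negative} integer matrix whose trace dominates a product of two diagonal entries that are each at least $1$, hence is at least $1$. None of this appears in your sketch, and without it you have no control over even the sign of $\tr\bigl(B_1\mathcal{A}(b)J\mathcal{A}(a)B_2\bigr)$.

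The remaining ingredient, which you allude to with ``isolate the contribution of the diagonal entries'' but neither state nor prove, is the elementary inequality $\tr(B_1CB_2)\geq\mathfrak{d}(B_1)\mathfrak{d}(B_2)\tr(C)$, valid for any non-negative $C\in\Mat$ and checked by a one-line expansion. Applied with $C:=\mathcal{A}(b)\bigl(\mathcal{A}(\tilde w)-\mathcal{A}(w)\bigr)\mathcal{A}(a)$, which the structural argument shows is non-negative with $\tr C\geq 1$, this delivers exactly the additive gap $\mathfrak{d}(B_1)\mathfrak{d}(B_2)$. Your plan to ``follow the BTV computation closely'' is a promissory note rather than a proof: the non-negativity and the $\tr\geq 1$ bound are the entire content of the lemma, and as written they are asserted but not established, so the proposal has a genuine gap.
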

\begin{proof}
Since $\tr \mathcal{A}^{(\alpha)}(u)=\alpha^{|u|_1}\tr \mathcal{A}(u)$ for every finite word $u$ it is clearly sufficient to treat only the case $\alpha=1$. We shall deal first with the case where $\tilde a >b$ and $w > \tilde w$, the alternative case being similar. Since $\tilde a > b$ we may write $a=u 1 c$, $b=\tilde c 0 \tilde v$ for some finite words $c$, $u$ and $v$ (which may be empty). Note that $J$ satisfies the relations
\[A_1JA_1= A_0JA_0 = J,\qquad A_0JA_1 = \left(\begin{array}{cc}0 & -1\\ -1 & -1\end{array}\right),\qquad A_1JA_0 = \left(\begin{array}{cc}1 & 1\\ 1 & 0\end{array}\right),\]
and hence by Lemma~\ref{beeteevee},
\[\tr\left(\mathcal{A}(a)(\mathcal{A}(\tilde w) - \mathcal{A}(w))\mathcal{A}(b)\right)=k(w) \tr\left(\mathcal{A}(u)\left(\begin{array}{cc}1 & 1\\ 1 & 0\end{array}\right)\mathcal{A}(\tilde v)\right)  \geq 1.\]
Now, a direct calculation shows that for any non-negative matrix $C \in \mathbf{M}_2(\mathbb{R})$ we have $\tr(B_1CB_2) \geq \mathfrak{d}(B_1)\mathfrak{d}(B_2)\tr(C)$. Since the matrix $\mathcal{A}(a)(\mathcal{A}(\tilde w) - \mathcal{A}(w))\mathcal{A}(b)$ is non-negative, we deduce that
\begin{align*} \tr(B_1 \mathcal{A}(a \tilde w b)B_2)-  \tr(B_1\mathcal{A}(a w b)B_2) &=\tr \left(B_1 \mathcal{A}(a)(\mathcal{A}(\tilde w) - \mathcal{A}(w))\mathcal{A}(b) B_2\right)\\
&\geq \mathfrak{d}(B_1)\mathfrak{d}(B_2) \tr \left( \mathcal{A}(a)(\mathcal{A}(\tilde w) - \mathcal{A}(w))\mathcal{A}(b) \right)\\
& \geq \mathfrak{d}(B_1)\mathfrak{d}(B_2)\end{align*}
as required. In the case where $\tilde b > a$ and $\tilde w > w$, the integer $k(w)$ and the matrix $A_0JA_1$ each contribute a negative sign to the product $\mathcal{A}(a)(\mathcal{A}(\tilde w) - \mathcal{A}(w))\mathcal{A}(b)) $ and the same conclusion may be reached.
\end{proof}
We may now prove the following two results which will allow us to characterise extremal orbits in terms of balanced words:

\begin{lemma}\label{finite-balanced-best}
Let $0 \leq \frac{p}{q} \leq 1$, with the integers $p$ and $q$ not necessarily coprime. Suppose that $|u|=q$, $|u|_1=p$ and
\begin{equation}\label{jt}\rho(\mathcal{A}(u)) = \max\left\{\rho(\mathcal{A}(v)) \colon |v|=q\text{ and }|v|_1=p\right\}.\end{equation}
Then the infinite word $u^\infty$ is balanced.
\end{lemma}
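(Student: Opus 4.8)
The plan is to argue by contradiction, and the first move is to record that $\rho(\mathcal{A}(u))$ depends only on $u$ up to cyclic permutation: if $u=ab$ then $\mathcal{A}(ab)=\mathcal{A}(b)\mathcal{A}(a)$ and $\mathcal{A}(ba)=\mathcal{A}(a)\mathcal{A}(b)$ have the same characteristic polynomial, hence the same spectral radius, and cyclic permutation also preserves $|u|$ and $|u|_1$. Consequently every cyclic permutation $u'$ of $u$ again satisfies the maximality condition \eqref{jt}. Now suppose $u^\infty$ is \emph{not} balanced. By Lemma~\ref{cyclic-balanced} some cyclic permutation of $u$ fails to be balanced; replacing $u$ by that permutation, I may assume from the outset that $u$ itself is not balanced while still satisfying \eqref{jt}.

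The second move is to manufacture a strictly better competitor. By Lemma~\ref{theys2} there is a suboptimal triple $(a,w,b)$ with $awb\prec u$; write $u=c\,(awb)\,d$ for possibly empty words $c,d$ and set $v:=c\,(a\tilde w b)\,d$. Since reversal preserves length and $1$-count, $|v|=q$ and $|v|_1=p$, so $v$ is admissible in \eqref{jt}. Using $\mathcal{A}(v)=\mathcal{A}(d)\,\mathcal{A}(a\tilde w b)\,\mathcal{A}(c)$ and $\mathcal{A}(u)=\mathcal{A}(d)\,\mathcal{A}(awb)\,\mathcal{A}(c)$, the cyclic invariance of the trace, and Lemma~\ref{sot2} with $\alpha=1$, $B_1=\mathcal{A}(d)$, $B_2=\mathcal{A}(c)$ (interpreting $\mathcal{A}$ of the empty word as the identity), I would obtain
\[
\tr \mathcal{A}(v) \;\geq\; \tr \mathcal{A}(u) + \mathfrak{d}\big(\mathcal{A}(d)\big)\,\mathfrak{d}\big(\mathcal{A}(c)\big).
\]
Each matrix $\mathcal{A}(c)$ (and the identity) is a non-negative integer matrix of determinant $1$, so it cannot have a zero on its diagonal, whence $\mathfrak{d}(\mathcal{A}(c)),\mathfrak{d}(\mathcal{A}(d))\geq 1$ and therefore $\tr \mathcal{A}(v) > \tr \mathcal{A}(u)$.

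The final move converts this into $\rho(\mathcal{A}(v))>\rho(\mathcal{A}(u))$. For any nonempty word $w$, $\mathcal{A}(w)$ has determinant $1$ and, by the same diagonal-entry bound, trace at least $2$; hence its eigenvalues are the reals $\rho(\mathcal{A}(w))$ and $\rho(\mathcal{A}(w))^{-1}$, so $\tr \mathcal{A}(w)=\rho(\mathcal{A}(w))+\rho(\mathcal{A}(w))^{-1}$. Since $s\mapsto s+s^{-1}$ is strictly increasing on $[1,\infty)$, the strict trace inequality forces $\rho(\mathcal{A}(v))>\rho(\mathcal{A}(u))$, contradicting \eqref{jt}; hence $u^\infty$ is balanced.

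I do not expect a genuine obstacle here: the substantive content is packaged in Lemmas~\ref{theys2}, \ref{sot2} and \ref{cyclic-balanced}. The points requiring care are the legitimacy of the reduction to ``$u$ not balanced'' (needing that cyclic permutation preserves both the competitor class of \eqref{jt} and the value of $\rho$), the degenerate cases in which $c$ or $d$ is empty, and the trace-to-spectral-radius passage, which relies on $\det\mathcal{A}(\cdot)=1$ together with $\tr\mathcal{A}(\cdot)\geq 2$ — the latter being the same positivity of diagonal entries that makes the error term in Lemma~\ref{sot2} at least $1$ rather than merely positive.
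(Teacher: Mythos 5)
Your proof is correct and takes essentially the same route as the paper: you invoke Lemmas~\ref{theys2} and \ref{sot2} to manufacture a strictly better competitor via a trace comparison, convert the trace gap into a spectral-radius gap using $\det\mathcal{A}(\cdot)=1$ and $\tr\mathcal{A}(\cdot)\geq 2$ (the paper uses the explicit formula $\rho = \tfrac{1}{2}\bigl(\tr + \sqrt{\tr^2-4}\bigr)$, which is equivalent to your monotonicity-of-$s+s^{-1}$ argument), and close with the cyclic invariance of the spectral radius together with Lemma~\ref{cyclic-balanced}. The only cosmetic difference is the order of operations: you reduce at the outset to a non-balanced cyclic permutation, while the paper first proves balancedness of $u$ and then extends the conclusion to its cyclic permutations.
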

\begin{proof}
We shall begin by showing that if $u$ has the properties described then it is balanced. Let us assume for a contradiction that $u$ has these properties but is \emph{not} balanced. By Lemma~\ref{theys2}, there exists a suboptimal triple $(a,w,b)$ such that $a w b \prec u$. Let us write $u = s_1 a w b s_2$ and define $\hat u:=s_1 a \tilde w b s_2 $. By Lemma~\ref{sot2} we have $\tr(\mathcal{A}(\hat u)) > \tr(\mathcal{A}(u))$. Since $\mathcal{A}(\hat u)$ and $\mathcal{A}(u)$ are both non-negative matrices with unit determinant, it follows that
\begin{align*}
\rho(\mathcal{A}(\hat u)) &= \frac{1}{2}\left(\tr(\mathcal{A}(\hat u)) + \sqrt{\tr(\mathcal{A}(\hat u))^2  -4}\right)> \frac{1}{2}\left(\tr(\mathcal{A}(u)) + \sqrt{\tr(\mathcal{A}(u))^2  -4}\right) \\ &= \rho(\mathcal{A}(u)).
\end{align*}
Since clearly $|\hat u| = |u|$ and $|\hat u |_1 = |u|_1$ this is a contradiction, so $u$ must be balanced as required.

Now, suppose that $u$ satisfies \eqref{jt} with $|u|_1=p$ and $|u|=q$, and that $v$ is a cyclic permutation of $u$. It is a well-known property of the spectral radius that $\rho(B_1B_2)=\rho(B_2B_1)$ for any $B_1,B_2 \in \Mat$, and it follows from this that $\rho(\mathcal{A}(v))=\rho(\mathcal{A}(u))$. By applying the preceding argument to $v$ it follows that $v$ is also balanced.
We conclude that all of the cyclic permutations of $u$ are balanced, and by Lemma~\ref{cyclic-balanced} this implies that $u^\infty$ is balanced as required.
\end{proof}

\begin{proposition}\label{balanced1}
Let $\alpha \in (0,1]$ and suppose that $x \in Z_\alpha$. Then $x$ is balanced.
\end{proposition}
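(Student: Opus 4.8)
The plan is to reduce the statement about the infinite word $x \in Z_\alpha$ to a statement about its finite subwords, to which the machinery of Lemma~\ref{sot2} can be applied. Recall that $\|\mathcal{A}^{(\alpha)}(x,n)\|_\alpha = \varrho(\alpha)^n$ for every $n$, so every prefix of $x$ realises the maximal possible norm; moreover, since $TZ_\alpha \subseteq Z_\alpha$, the same is true of every shifted prefix $\mathcal{A}^{(\alpha)}(T^k x, n)$. First I would argue by contradiction: if $x$ is not balanced, then some finite subword of $x$ is not balanced, and hence by Lemma~\ref{theys2} there is a suboptimal triple $(a,w,b)$ with $awb \prec x$. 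Say $awb$ occupies positions $k+1,\ldots,k+|awb|$ in $x$, and fix a large $n > k + |awb|$. Writing $n' := k+|awb|$, we decompose
\[
\mathcal{A}^{(\alpha)}(x,n) = \mathcal{A}^{(\alpha)}(T^{n'}x, n-n')\,\mathcal{A}^{(\alpha)}(awb)\,\mathcal{A}^{(\alpha)}(x,k),
\]
where I abuse notation by reading the middle factor as built from the symbols of $awb$; the point is that $\mathcal{A}^{(\alpha)}(x,n)$ is a product $B_1 \mathcal{A}^{(\alpha)}(awb) B_2$ with $B_1, B_2$ non-negative matrices.

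The key step is then to replace $w$ by $\tilde w$ inside this product and use Lemma~\ref{sot2} to show this strictly increases the trace, while keeping the word length and $1$-count unchanged. Concretely, let $\hat x_n$ be the finite word $\pi_n(x)$ with the internal block $w$ reversed, i.e.\ $\pi_k(x)\, a\, \tilde w\, b\, \pi_{n-n'}(T^{n'}x)$ (reading symbols left to right so that the matrix product is taken in the correct order). Lemma~\ref{sot2}, applied with $B_1 := \mathcal{A}^{(\alpha)}(T^{n'}x,n-n')$ and $B_2 := \mathcal{A}^{(\alpha)}(x,k)$, yields
\[
\tr\!\left(\mathcal{A}^{(\alpha)}(\hat x_n)\right) \;\geq\; \tr\!\left(\mathcal{A}^{(\alpha)}(x,n)\right) + \alpha^{|awb|_1}\,\mathfrak{d}(B_1)\,\mathfrak{d}(B_2).
\]
Now I need a lower bound on $\mathfrak{d}(B_1)\mathfrak{d}(B_2)$ that does not degenerate as $n\to\infty$: here is where I expect the main obstacle to lie. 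Because $x \in Z_\alpha$, Lemma~\ref{nolongwords} combined with Theorem~\ref{Xphi} is not yet available (we have not shown $x$ is balanced — that is what we are proving), so I cannot directly assert that $0^N, 1^N \nprec x$. The cleanest fix is to first prove a preliminary statement: \emph{no word of the form $0^N$ or $1^N$ with $N$ large is a subword of any $x \in Z_\alpha$}. This follows from the estimate $\vvv A_0^m \vvv \to \infty$ linearly together with the extremal-norm bound of Lemma~\ref{extnorm} and the subexponential growth of $\|A_0^m\|_\alpha$ compared with $\varrho(\alpha)^m > 1$ (Lemma~\ref{gtr1}): a long run of a single symbol in $x$ forces a factor whose norm is $o(\varrho(\alpha)^m)$, contradicting $x \in Z_\alpha$. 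Once this is in hand, Lemma~\ref{rho-norm} gives $\mathfrak{d}(B_i) \geq (2N^2)^{-1}\vvv B_i\vvv \geq c_K \|B_i\|_\alpha$ for a constant $c_K$ depending only on a compact neighbourhood of $\alpha$, and $\|B_1\|_\alpha \|B_2\|_\alpha \geq \|B_1 \mathcal{A}^{(\alpha)}(awb) B_2\|_\alpha / \|\mathcal{A}^{(\alpha)}(awb)\|_\alpha = \varrho(\alpha)^n / \|\mathcal{A}^{(\alpha)}(awb)\|_\alpha$, so $\mathfrak{d}(B_1)\mathfrak{d}(B_2) \geq c\,\varrho(\alpha)^n$ for a constant $c>0$ independent of $n$.

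Finally, I combine everything. Since $\mathcal{A}^{(\alpha)}(\hat x_n)$ has the same $1$-count and length as $\mathcal{A}^{(\alpha)}(x,n)$, and both matrices are non-negative with the same determinant $\alpha^{2|awb|_1 + \cdots}$ (more precisely $\det \mathcal{A}^{(\alpha)}(\hat x_n) = \det \mathcal{A}^{(\alpha)}(x,n) = \alpha^{2p_n}$ where $p_n = |\pi_n(x)|_1$), the strict trace inequality upgrades via the formula $\rho(B) = \tfrac12(\tr B + \sqrt{(\tr B)^2 - 4\det B})$ used in the proof of Lemma~\ref{finite-balanced-best} to
\[
\rho\!\left(\mathcal{A}^{(\alpha)}(\hat x_n)\right) \;>\; \rho\!\left(\mathcal{A}^{(\alpha)}(x,n)\right) + c'\,\varrho(\alpha)^n
\]
for some $c'>0$. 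But $\rho(\mathcal{A}^{(\alpha)}(\hat x_n))^{1/n} \leq \varrho(\alpha)$ by Lemma~\ref{bowf}, so $\rho(\mathcal{A}^{(\alpha)}(\hat x_n)) \leq \varrho(\alpha)^n$, while $\rho(\mathcal{A}^{(\alpha)}(x,n)) \geq \mathfrak{d}(\mathcal{A}^{(\alpha)}(x,n)) \geq c'' \varrho(\alpha)^n$ by Lemma~\ref{rho-norm} and the preliminary no-long-runs claim. Choosing $n$ large enough that $c'' + c' > 1$ (which is possible since $c', c''$ are independent of $n$) gives $\varrho(\alpha)^n \geq \rho(\mathcal{A}^{(\alpha)}(\hat x_n)) > (c'' + c')\varrho(\alpha)^n > \varrho(\alpha)^n$, a contradiction. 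Hence $x$ is balanced. I would expect the write-up to spend most of its length on the no-long-runs lemma and on bookkeeping the left-to-right versus right-to-left conventions in the word/matrix correspondence; the rest is an application of the already-established Lemmas~\ref{sot2}, \ref{rho-norm}, \ref{bowf}, and \ref{extnorm}.
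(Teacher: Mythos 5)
Your argument assembles exactly the right ingredients --- the suboptimal triple of Lemma~\ref{theys2}, the trace increment of Lemma~\ref{sot2}, the no-long-runs observation, and the passage between norm, diagonal and spectral radius via Lemma~\ref{rho-norm} --- but the concluding step does not close. You establish, with constants $c',c''>0$ independent of $n$, the pair of inequalities
\[
\rho\!\left(\mathcal{A}^{(\alpha)}(\hat x_n)\right) \;\geq\; \rho\!\left(\mathcal{A}^{(\alpha)}(x,n)\right) + c'\varrho(\alpha)^n,
\qquad
\rho\!\left(\mathcal{A}^{(\alpha)}(x,n)\right) \;\geq\; c''\varrho(\alpha)^n,
\]
and then propose to contradict $\rho\!\left(\mathcal{A}^{(\alpha)}(\hat x_n)\right)\leq\varrho(\alpha)^n$ by ``choosing $n$ large enough that $c''+c'>1$.'' But since $c'$ and $c''$ are, as you note yourself, independent of $n$, that inequality either holds for every $n$ or for none; increasing $n$ gains nothing. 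Tracing your estimates, $c''$ is of order $(N^2 M_K)^{-1}$ while $c'$ carries an extra factor of $\alpha^{|awb|_1}c_K^2\bigl\|\mathcal{A}^{(\alpha)}(awb)\bigr\|_\alpha^{-1}$, and there is no reason for their sum to exceed $1$. The only thing your chain proves is the harmless bound $c''+c'\leq 1$; a single reversal of $w$ simply does not move the trace far enough.

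The missing ingredient is an accumulation device, and this is where the paper's proof invokes \emph{recurrence} (the paper's proof explicitly assumes $x$ recurrent, which is all that is used: in the proof of Theorem~\ref{technical} the proposition is only applied to recurrent points of $Z_\alpha$, obtained from compactness and from Poincar\'e recurrence). A recurrent unbalanced $x$ has infinitely many occurrences of $awb$, so one can find a subword $u^{(0)}=s_1(awb)s_2(awb)\cdots s_K(awb)s_{K+1}\prec x$ containing $K$ disjoint copies, and then reverse the $w$'s one at a time to obtain $u^{(1)},\ldots,u^{(K)}$. Precisely your estimates --- the two flanking factors playing the roles of $B_1,B_2$, with $\mathfrak{d}$ controlled from below by Lemma~\ref{rho-norm} and the trace of every intermediate $u^{(i)}$ controlled from above by $2\varrho(\alpha)^{|u^{(0)}|}$ --- now read \emph{multiplicatively}: each reversal multiplies the trace by a fixed factor $1+\varepsilon$, hence $\tr\mathcal{A}^{(\alpha)}(u^{(K)})\geq(1+\varepsilon)^K\tr\mathcal{A}^{(\alpha)}(u^{(0)})$. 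Choosing $K$ so that $(1+\varepsilon)^K$ exceeds the fixed comparison constant between $\tr\mathcal{A}^{(\alpha)}(u^{(0)})$ and $\varrho(\alpha)^{|u^{(0)}|}$ then yields the contradiction that a lone flip cannot. This is not a cosmetic refinement: there is no $n$ for which your one-flip estimate is self-contradictory, so without recurrence (or some substitute for it) the argument genuinely fails.
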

\begin{proof}
To prove the proposition, let us suppose that there exists a recurrent infinite word $x \in Z_\alpha$ which is not balanced. We shall then be able to deduce a contradiction, and the result follows. The general principle of the proof is that if $x$ is recurrent and not balanced, then we can construct a word based on $x$ along which the trace of the product $\mathcal{A}^{(\alpha)}(x,n)$ grows ``too rapidly''.

Fix a real number $C_\alpha>1$ such that $C_\alpha^{-1}\|B\|_\alpha \leq \|B\| \leq  C_\alpha \|B\|_\alpha$ for all $B\in \Mat$. By Lemma~\ref{gtr1} we have $\varrho(\alpha)>1$, and by Gelfand's formula we have $\left\|\mathcal{A}^{(\alpha)}\left(0^n\right)\right\|^{1/n}_\alpha \to 1$ as $n \to \infty$. It follows in particular that there is an integer $N_0 \geq 2$ such that $\left\|\mathcal{A}^{(\alpha)}\left(0^{N_0}\right)\right\|_\alpha < \varrho(\alpha)^{N_0}$ and therefore $0^{N_0} \nprec z$ for every $z \in Z_\alpha$. Similarly we may choose $N_1 \geq 2$ such that $1^{N_1} \nprec z$ for every $z \in Z_\alpha$. Let $N:= \max\{N_0,N_1\}$, and choose a further integer $M \geq 2$ such that
\[\max\left\{\left\|\mathcal{A}^{(\alpha)}\left(0^M\right)\right\|_\alpha, \left\|\mathcal{A}^{(\alpha)}\left(1^M\right)\right\|_\alpha\right\} <\frac{\varrho(\alpha)^M }{ 2C_\alpha N^2}.\]
If $v$ is any subword of $x$, then there exists $n \geq 0$ such that $\mathcal{A}^{(\alpha)}(v)=\mathcal{A}^{(\alpha)}(T^nx,|v|)$, and since $T^nx \in Z_\alpha$ this implies
\begin{equation}\label{pfbal1} \mathfrak{d}\left(\mathcal{A}^{(\alpha)}(v)\right)\geq \frac{1}{2N^2}\Lvvv\mathcal{A}^{(\alpha)}(v)\Rvvv \geq \frac{1}{2C_\alpha N^2}\left\|\mathcal{A}^{(\alpha)}\left(T^nx,|v|\right)\right\|_\alpha = \frac{\varrho(\alpha)^{|v|}}{2C_\alpha N^2},\end{equation}
where we have used Lemma~\ref{rho-norm}. On the other hand, for any nonempty finite word $u$,
\begin{equation}\label{pfbal4}\tr \mathcal{A}^{(\alpha)}(u) \leq 2\rho\left(\mathcal{A}^{(\alpha)}(u)\right) \leq 2 \left\|\mathcal{A}^{(\alpha)}(u)\right\|_\alpha \leq 2\varrho(\alpha)^{|u|}.\end{equation}
Now, since $x$ is not balanced, it by definition has a subword which is not balanced. Applying Lemma~\ref{sot2} to this subword we deduce that there exists a suboptimal triple $(a, w,b)$ such that $a w b \prec x$. Define $\ell:=|a w b|$, and fix an integer $K \geq 1$ such that
\[\left(1+\frac{\alpha^\ell}{16C_\alpha^2 N^4 M^2 \varrho(\alpha)^\ell}\right)^K> 2C_\alpha N^2.\]
Since $x$ is recurrent there are infinitely many occurrences of the word $a w b$ as a subword of $x$, and so we may choose words $s_1,\ldots,s_{K+1}$ such that the word
\[u^{(0)}:= s_1 (a w b) s_2 (a w b) s_3 \ldots\ldots s_K (a w b) s_{K+1}\]
is a subword of $x$. Let $L:=|u^{(0)}|$, and for $i=1,\ldots,K$ define a new word $u^{(i)}$ by reversing the first $i$ explicit instances of the word $w$ in $u^{(0)}$; that is,
\[u^{(1)}:= s_1 (a \tilde w b) s_2 (a w b) s_3 \ldots\ldots s_K (a w b) s_{K+1},\]
\[u^{(2)}:= s_1 (a \tilde w b) s_2 (a \tilde w b) s_3 \ldots\ldots s_K (a w b) s_{K+1},\]
and so forth, up to
\[u^{(K)}:= s_1 (a \tilde w b) s_2 (a \tilde w b) s_3 \ldots\ldots s_K (a \tilde w b) s_{K+1}.\]
Note that for each $i$ we have, by applying Lemma~\ref{sot2} $i$ times and using \eqref{pfbal1},
\begin{equation}\label{pfbal5}\tr \mathcal{A}^{(\alpha)}(u^{(i)}) \geq \tr \mathcal{A}^{(\alpha)}(u^{(0)}) \geq 2\mathfrak{d}\left(\mathcal{A}^{(\alpha)}(u^{(0)}) \right)\geq \frac{\varrho(\alpha)^L}{C_\alpha N^{2}},\end{equation}
since $u^{(0)}$ is a subword of $x$. As a consequence we observe that $0^M\nprec u^{(i)}$ for every $i$, since if we were to have $0^M \prec u^{(i)}$ for some $i$ then we could obtain
\begin{align*}\frac{\varrho(\alpha)^L}{2C_\alpha N^2} \leq \frac{1}{2}\tr \mathcal{A}^{(\alpha)}(u^{(i)}) &\leq \rho\left(\mathcal{A}^{(\alpha)}(u^{(i)})\right) \leq \left\|\mathcal{A}^{(\alpha)}(u^{(i)})\right\|_\alpha \\&\leq \left\|\mathcal{A}^{(\alpha)}\left(0^M\right)\right\|_\alpha . \varrho(\alpha)^{L-M} <\frac{\varrho(\alpha)^L}{2C_\alpha N^2},\end{align*}
a contradiction. Clearly an analogous contradiction would arise if we were to have $1^M \prec u^{(i)}$ and we conclude that $1^M \nprec u^{(i)}$ also.

Now, for $i=1,\ldots,K$ let $c^{(i)}$, $d^{(i)}$ be those words such that
$u^{(i-1)}=c^{(i)} a w b d^{(i)}$ and $u_{i}=c^{(i)} a \tilde w b d^{(i)}$. Note that $|c^{(i)}|+|d^{(i)}|+\ell = L$ for each $i$. Making $i$ applications of Lemma~\ref{sot2} and using \eqref{pfbal1} yields
\begin{align*}\tr \mathcal{A}^{(\alpha)}(c^{(i)}) &=\tr \mathcal{A}^{(\alpha)}\left(s_1(a\tilde w b)s_2 \ldots s_{i-1} (a\tilde w b)s_i\right) \\
&\geq \tr \mathcal{A}^{(\alpha)}\left(s_1(a w b)s_2 \ldots s_{i-1} (a w b)s_i\right) \geq \frac{\varrho(\alpha)^{|c^{(i)}|}}{2C_\alpha N^2},\end{align*}
since the last of these words is a subword of $u^{(0)}$, and $u^{(0)}$ is a subword of $x$. Since $c^{(i)} \prec u^{(i)}$ and $0^M, 1^M\nprec u^{(i)}$ we have $0^M , 1^M \nprec c^{(i)}$, and by Lemma~\ref{rho-norm} in combination with the preceding inequality this implies
\begin{equation}\label{pfbal2} \mathfrak{d}\left(\mathcal{A}^{(\alpha)}(c^{(i)})\right) \geq \frac{1}{4M^2}\tr \mathcal{A}^{(\alpha)}(c^{(i)}) \geq \frac{\varrho(\alpha)^{|c^{(i)}|}}{4C_\alpha N^2 M^2}.\end{equation}
Equally, since $d^{(i)} \prec u^{(0)}$ and $u^{(0)}$ is a subword of $x$, we may apply \eqref{pfbal1} to obtain
\begin{equation}\label{pfbal3}\mathfrak{d} \left(\mathcal{A}^{(\alpha)}(d^{(i)}) \right)\geq \frac{\varrho(\alpha)^{|d^{(i)}|}}{2C_\alpha N^2}.\end{equation}
We may now complete the proof. Combining \eqref{pfbal2}, \eqref{pfbal3}, and \eqref{pfbal4} we obtain for each $i$
\[\alpha^{|a w b|_1}\mathfrak{d}\left( \mathcal{A}^{(\alpha)}(c^{(i)})\right) \mathfrak{d}\left( \mathcal{A}^{(\alpha)}(d^{(i)})\right) \geq \frac{\alpha^\ell\varrho(\alpha)^{L-\ell}}{8C_\alpha^2 N^4 M^2}\geq \frac{\alpha^\ell}{16C_\alpha^2 N^4 M^2\varrho(\alpha)^\ell} \tr \mathcal{A}^{(\alpha)}(u^{(i-1)}),\]
and hence by Lemma~\ref{sot2},
\[\tr \mathcal{A}^{(\alpha)}(u^{(i)}) \geq \left(1+\frac{\alpha^\ell}{16C_\alpha^2 N^4M^2\varrho(\alpha)^\ell}\right)\tr \mathcal{A}^{(\alpha)}(u^{(i-1)}).\]
In combination with \eqref{pfbal4} and \eqref{pfbal5} this yields
\begin{align*}
2\varrho(\alpha)^L \geq \tr \mathcal{A}^{(\alpha)}(u^{(K)}) &\geq \left(1+\frac{\alpha^\ell}{16C_\alpha^2 N^4M^2\varrho(\alpha)^\ell}\right)^K\tr \mathcal{A}^{(\alpha)}(u^{(0)}) \\&\geq \left(1+\frac{\alpha^\ell}{16C_\alpha^2 N^4M^2\varrho(\alpha)^\ell}\right)^K \cdot\frac{\varrho(\alpha)^L}{C_\alpha N^2},
\end{align*}
contradicting our choice of $K$. The proof is complete.
\end{proof}

\section{Study of the growth of matrix products along balanced words}\label{section6}

In this section we analyse in detail the exponential growth rate of $\mathcal{A}(x,n)$ in the limit as $n \to \infty$ for $x \in X_\gamma$, investigating in particular the manner in which this value depends on $\gamma$. A construction with similar properties is discussed briefly in \cite[\S4.3]{BM}. The results of this section are summarised in the following proposition:

\begin{proposition}\label{Sproposition}
\begin{itemize}
\item There exists a continuous concave function $S \colon [0,1] \to \mathbb{R}$ such that for each $\gamma \in [0,1]$,
\[\lim_{n \to \infty} \frac{1}{n}\log\Lvvv\mathcal{A}(x,n)\Rvvv = \lim_{n \to \infty} \frac{1}{n}\log\rho(\mathcal{A}(x,n)) = S(\gamma)\]
uniformly for $x \in X_\gamma$.
\item If $\gamma=p/q \in [0,1]\cap\mathbb{Q}$ then $S(\gamma)=q^{-1}\log \rho(\mathcal{A}(x,q))$ for every $x \in X_\gamma$.
    \item The function $S$ also satisfies $\inf_{\gamma\in[0,1]} S = S(0)=S(1)=0$, $\sup S = S(1/2)=\log \varrho(1)$, and $S(\gamma)=S(1-\gamma)$ for all $\gamma \in [0,1]$.
\item The function $S$ is non-decreasing on $\left[0,\frac12\right]$.
    \end{itemize}
\end{proposition}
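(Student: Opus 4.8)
The plan is to reduce everything to the scalar quantities $\tr\mathcal{A}(u)$, $\rho(\mathcal{A}(u))$, $\mathfrak{d}(\mathcal{A}(u))$ and $\vvv\mathcal{A}(u)\vvv$, which by Lemma~\ref{rho-norm} are mutually comparable up to a factor $2N^2$ along any word avoiding the blocks $0^N$ and $1^N$; by Lemma~\ref{nolongwords} one such $N$ serves for all $x \in X_\gamma$ with $\gamma$ in a fixed compact subinterval of $(0,1)$. To define $S$, I first treat $\gamma = p/q$ in lowest terms: there $X_\gamma$ is a single periodic $T$-orbit, so $\mathcal{A}(x,kq) = \mathcal{A}(x,q)^k$, and Gelfand's formula gives $\tfrac1n\log\vvv\mathcal{A}(x,n)\vvv \to \tfrac1q\log\rho(\mathcal{A}(x,q))$ uniformly over the finite set $X_\gamma$, this value being independent of $x\in X_\gamma$ because the matrices $\mathcal{A}(x,q)$ are cyclic permutations and hence conjugate; call it $S(p/q)$ (this already gives the second bullet and the values $S(0)=S(1)=0$). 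For irrational $\gamma$, $(X_\gamma,T)$ is uniquely ergodic by Theorem~\ref{Xphi}(ii); since $\mathfrak{d}(BC) \ge \mathfrak{d}(B)\mathfrak{d}(C)$ and $\vvv BC\vvv \le \vvv B\vvv\,\vvv C\vvv$ for non-negative $2\times 2$ matrices, the cocycle identity makes both $g_n := -\log\mathfrak{d}(\mathcal{A}(\cdot,n))$ and $c-g_n$ (with $c := 2\log 2N^2$) subadditive sequences of continuous functions, so a blocking argument with the uniform Birkhoff theorem bounds $\limsup_n\sup_x\tfrac1n g_n$ from above by $L:=\inf_n\tfrac1n\int g_n\,d\mu$ and, applied to $c-g_n$, bounds $\liminf_n\inf_x\tfrac1n g_n$ from below by the same $L$; hence $\tfrac1n g_n \to L$ uniformly, and setting $S(\gamma):=-L$ and invoking comparability of $\mathfrak{d}$, $\vvv\cdot\vvv$ and $\rho$ yields the first bullet. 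I expect this last uniform convergence to be the main obstacle, and it is precisely where two-sided comparability --- i.e.\ the non-negativity of the matrices --- is essential, since uniform convergence of a subadditive cocycle can fail for general continuous matrix cocycles over a minimal system.

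The next step is the identity $S(p/q) = \tfrac1q\log\max\{\rho(\mathcal{A}(v)) : |v|=q,\ |v|_1 = p\}$, valid for all $0\le p\le q$: ``$\le$'' holds because $\pi_q(x)$ is such a word for any $x\in X_{p/q}$, and ``$\ge$'' holds because by Lemma~\ref{finite-balanced-best} the maximum is attained at some $v$ with $v^\infty$ balanced, so $v^\infty\in X_{p/q}$ and $\rho(\mathcal{A}(v))=e^{qS(p/q)}$ (reducing the non-coprime case to the period of $X_{p/q}$). Concavity then follows by concatenating extremal words: given rationals $\gamma_1,\gamma_2$ in a compact subinterval of $(0,1)$, choose representations $\gamma_i = p_i/q_i$ realising any desired rational weight $q_1/(q_1+q_2)$, and let $v_1,v_2$ be extremal for $(p_1,q_1)$ and $(p_2,q_2)$; these are balanced with runs bounded by a uniform $N$, so for each $k$, using $\tr(B_1C)\ge\mathfrak{d}(B_1)\tr(C)$ for non-negative $B_1,C$ together with Lemma~\ref{rho-norm}, $\rho(\mathcal{A}(v_1^k v_2^k)) = \rho\bigl(\mathcal{A}(v_2)^k\mathcal{A}(v_1)^k\bigr) \ge (4N^2)^{-1}e^{k(q_1 S(\gamma_1) + q_2 S(\gamma_2))}$, while the identity bounds this above by $e^{k(q_1+q_2)S((p_1+p_2)/(q_1+q_2))}$; letting $k\to\infty$ removes the constant and gives $S(\lambda\gamma_1+(1-\lambda)\gamma_2) \ge \lambda S(\gamma_1) + (1-\lambda)S(\gamma_2)$ for all rational $\lambda,\gamma_1,\gamma_2$ in the interval. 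Because $0\le S\le\log\varrho(1)$ --- the upper bound from Lemma~\ref{bowf} --- this rational concavity upgrades to continuity and concavity of $S$ on $(0,1)$, and an elementary estimate on products containing few $1$'s shows $S(\gamma)\to 0$ as $\gamma\to 0$ (and, via the symmetry below, as $\gamma\to 1$), so with $S(0)=S(1)=0$ the function $S$ is continuous and concave on all of $[0,1]$.

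The remaining assertions are short. The symmetry $S(\gamma)=S(1-\gamma)$ follows from $A_1 = PA_0P$ with $P=\bigl(\begin{smallmatrix}0&1\\1&0\end{smallmatrix}\bigr)$, since exchanging $0$ and $1$ in a word conjugates $\mathcal{A}(v)$ by $P$ and swaps $|v|_1$ with $|v|-|v|_1$, so the two versions of the max-identity coincide. Since $S\ge 0$ with $S(0)=S(1)=0$ we get $\inf S=0$; and Lemma~\ref{bowf} together with the max-identity gives $\varrho(1) = \sup_n\max_{0\le p\le n}e^{S(p/n)} = e^{\sup_{[0,1]}S}$ by continuity, so $\sup S = \log\varrho(1)$, and this supremum is attained at $\tfrac12$ because concavity and symmetry give $S(\tfrac12) = S\bigl(\tfrac{\gamma+(1-\gamma)}{2}\bigr) \ge \tfrac12 S(\gamma) + \tfrac12 S(1-\gamma) = S(\gamma)$ for every $\gamma$. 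Finally, the last bullet is immediate: for $0\le\gamma_1<\gamma_2\le\tfrac12$ one has $\gamma_2\in[\gamma_1,1-\gamma_1]$, so writing $\gamma_2 = (1-t)\gamma_1 + t(1-\gamma_1)$, concavity and symmetry give $S(\gamma_2) \ge (1-t)S(\gamma_1) + tS(1-\gamma_1) = S(\gamma_1)$, i.e.\ $S$ is non-decreasing on $[0,\tfrac12]$.
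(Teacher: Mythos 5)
Your approach to the first bullet is genuinely different from the paper's, and it works. The paper invokes Furman's theorem on cocycles over uniquely ergodic homeomorphisms, which forces it to pass to a two-sided natural extension $\hat{X}_\gamma$ since $T$ on $X_\gamma$ is not invertible. You instead note that $g_n := -\log\mathfrak{d}(\mathcal{A}(\cdot,n))$ is subadditive (from $\mathfrak{d}(BC)\ge\mathfrak{d}(B)\mathfrak{d}(C)$ for non-negative matrices), and that $c-g_n$ with $c=2\log 2N^2$ is \emph{also} subadditive once Lemma~\ref{rho-norm} and Lemma~\ref{nolongwords} pin the norm and $\mathfrak{d}$ within a bounded ratio along words in $X_\gamma$. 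Applying the classical one-sided bound $\limsup_n\sup_x \frac1n g_n \le \inf_n\frac1n\int g_n\,d\mu$ (valid over any uniquely ergodic, not necessarily invertible, system) to both $g_n$ and $c-g_n$ then gives uniform convergence from both sides. This is cleaner: it stays on the one-sided shift and isolates exactly why non-negativity is decisive. The max-identity, the symmetry via $P$, $\sup S=S(\frac12)$ from concavity and symmetry, and monotonicity on $[0,\frac12]$ are all handled correctly and in essentially the same spirit as the paper. Your concavity proof by concatenating extremal words is also a correct variant of Lemma~\ref{Sconcave}.

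There is, however, a real gap where you write ``this rational concavity upgrades to continuity and concavity of $S$ on $(0,1)$.'' Boundedness plus concavity on $(0,1)\cap\mathbb{Q}$ does produce a unique continuous concave extension $\widetilde S$ to $(0,1)$; but $S$ is already \emph{defined} at every irrational $\gamma$ by the first bullet, and nothing you have proved identifies that value with $\widetilde S(\gamma)$. The max-identity gives $\frac1n\log\rho(\mathcal{A}(x,n)) \le S(\varsigma(\pi_n(x))) = \widetilde S(\varsigma(\pi_n(x)))$ for $x\in X_\gamma$, whence $S(\gamma) \le \widetilde S(\gamma)$; but the reverse inequality is not automatic, because the truncations $\pi_q(x)$ of a Sturmian word of slope $\gamma$ do not in general generate periodic points of $X_{p/q}$, so you cannot feed them directly into the second bullet. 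The paper closes this by using the standard-word structure: for the characteristic word $x$ of $\gamma$, infinitely many truncations $\pi_{q_n}(x)$ are standard (Lemma~\ref{standard}), hence $[\pi_{q_n}(x)]^\infty\in X_{p_n/q_n}$, which gives $S(p_n/q_n)=\frac1{q_n}\log\rho(\mathcal{A}(x,q_n))\to S(\gamma)$ and forces $\widetilde S(\gamma)=S(\gamma)$. Some such approximation argument is needed before you may treat $S$ as the concave extension of its rational restriction, and without it the remaining assertions about $\sup S$, continuity at $0$ and $1$, and monotonicity --- all of which you derive from concavity and continuity on $[0,1]$ --- are not yet justified for irrational arguments.
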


The proof of Proposition \ref{Sproposition} is given in the form of a sequence  of lemmas. Specifically, the result follows by combining Lemmas \ref{Sfunction}--\ref{Sbounds} and Lemma~\ref{Smain} below.

\begin{lemma}\label{Sfunction}
Let $\gamma \in [0,1]$. Then there exists a real number $S(\gamma)$ such that
\[\lim_{n \to \infty}\frac{1}{n}\log \Lvvv\mathcal{A}(x,n)\Rvvv=\lim_{n \to \infty}\frac{1}{n}\log\rho( \mathcal{A}(x,n))=S(\gamma)\]
uniformly over $x \in X_\gamma$.
\end{lemma}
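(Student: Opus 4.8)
The plan is to establish the existence of $S(\gamma)$ and the uniform double limit via a subadditive cocycle argument over the uniquely ergodic system $(X_\gamma, T)$. First I would fix $\gamma \in [0,1]$ and recall from Theorem~\ref{Xphi} that $X_\gamma$ is compact, nonempty, and that $T$ restricted to $X_\gamma$ is continuous, minimal and uniquely ergodic; let $\mu = \mu_\gamma$ be its unique invariant measure. For $x \in X_\gamma$ set $f_n(x) := \log\Lvvv \mathcal{A}(x,n)\Rvvv$. Using the cocycle relation $\mathcal{A}(x,n+m) = \mathcal{A}(T^n x, m)\mathcal{A}(x,n)$ and submultiplicativity of $\vvv\cdot\vvv$, the sequence $(f_n)$ is a subadditive cocycle: $f_{n+m}(x) \le f_n(T^m x) + f_m(x)$ — care is needed here because the cocycle is applied on the right, so one either writes the estimate as $f_{n+m}(x) \le f_m(T^n x) + f_n(x)$ or notes that the roles of $n,m$ are interchangeable for the existence of the limit. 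Each $f_n$ is continuous on $X_\gamma$ since $x \mapsto \mathcal{A}(x,n)$ is locally constant. By the Kingman subadditive ergodic theorem combined with unique ergodicity (or by the standard fact that for a uniquely ergodic system a continuous subadditive cocycle has $\frac{1}{n}f_n \to \inf_m \frac1m \int f_m\,d\mu$ \emph{uniformly} — this is the natural analogue of the classical result for additive cocycles, see e.g. Furman's work, or it can be derived directly), we obtain a real number $S(\gamma) := \inf_{n} \frac1n \int f_n\,d\mu = \lim_n \frac1n f_n(x)$ with the convergence uniform in $x \in X_\gamma$.

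Next I would show that the same limit value is obtained using $\rho(\mathcal{A}(x,n))$ in place of $\Lvvv\mathcal{A}(x,n)\Rvvv$. The inequality $\rho(\mathcal{A}(x,n)) \le \Lvvv\mathcal{A}(x,n)\Rvvv$ is immediate, so $\limsup_n \frac1n \log\rho(\mathcal{A}(x,n)) \le S(\gamma)$. For the reverse bound, I would invoke Lemma~\ref{nolongwords}: for $\gamma \in (0,1)$, fixing $N > \max\{\lceil\gamma^{-1}\rceil, \lceil(1-\gamma)^{-1}\rceil\}$, no $x \in X_\gamma$ contains $0^N$ or $1^N$ as a subword, hence neither does any $\pi_n(x)$; Lemma~\ref{rho-norm} (applied with $\alpha = 1$) then gives $\frac{1}{2N^2}\Lvvv\mathcal{A}(x,n)\Rvvv \le \rho(\mathcal{A}(x,n))$ for all $n$ and all $x \in X_\gamma$. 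Taking logarithms, dividing by $n$ and letting $n \to \infty$ shows $\lim_n \frac1n \log\rho(\mathcal{A}(x,n)) = S(\gamma)$, again uniformly over $x \in X_\gamma$; and the factor $\frac{1}{2N^2}$ being independent of $x$ is exactly what makes the uniformity transfer. The boundary cases $\gamma = 0$ and $\gamma = 1$ are trivial and handled separately: $X_0 = \{0^\infty\}$ with $\mathcal{A}(0^\infty,n) = A_0^n = \left(\begin{smallmatrix}1&n\\0&1\end{smallmatrix}\right)$, whose norm grows polynomially, so $S(0) = 0$, and symmetrically $S(1) = 0$.

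The main obstacle I anticipate is securing the \emph{uniform} convergence rather than merely pointwise convergence, since Kingman's theorem in its bare form only gives almost-everywhere and $L^1$ convergence. The cleanest route is to exploit unique ergodicity directly: for the additive cocycle $g_m := f_m - \inf_k \frac{k^{-1}}\cdot$(no — rather) one uses that $f_m(x) - f_m(Tx)$ is bounded and that for each fixed $m$, $\frac{1}{n}\sum_{j=0}^{n-1} \frac{1}{m} f_m(T^j x) \to \frac1m\int f_m\,d\mu$ uniformly (the standard uniform ergodic theorem for continuous functions on a uniquely ergodic system), together with the subadditivity bound $\frac{1}{nm} f_{nm}(x) \le \frac{1}{n}\sum_{j=0}^{n-1}\frac{1}{m}f_m(T^{jm}x)$; combined with a bound controlling $f_\ell$ for $\ell$ not a multiple of $m$ (using that $\|A_0\|, \|A_1\| \le$ const, so $|f_{n+1}(x) - f_n(x)|$ is bounded), this yields $\limsup_n \sup_{x\in X_\gamma} \frac1n f_n(x) \le \frac1m \int f_m\,d\mu$ for every $m$, hence $\le S(\gamma)$. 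A matching lower bound comes from Lemma~\ref{schr} applied on $X_\gamma$, which gives $\liminf_n \frac1n f_n(x) \ge \liminf_n \frac{1}{nm}\sum_{k=0}^{n-1} f_m(T^k x) = \frac1m\int f_m\,d\mu$ uniformly in the relevant sense, and then optimizing over $m$. Once uniformity is in hand for $\Lvvv\cdot\Rvvv$, the transfer to $\rho$ via Lemma~\ref{rho-norm} is routine as described above.
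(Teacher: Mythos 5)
Your proof has a genuine gap in the lower bound, rooted in a misstatement of the relevant ergodic theory. The assertion that ``for a uniquely ergodic system a continuous subadditive cocycle has $\frac1n f_n \to \inf_m \frac1m\int f_m\,d\mu$ \emph{uniformly}'' is not a standard fact --- in general it is \emph{false}, and this is precisely the subtlety that Furman's paper (cited as \cite{F} in this article) is about. Over a uniquely ergodic base, one always gets $\limsup_n \sup_{x}\frac1n f_n(x) \le \inf_m\frac1m\int f_m\,d\mu$ uniformly, but the matching lower bound $\liminf_n \inf_x \frac1n f_n(x)\ge\inf_m\frac1m\int f_m\,d\mu$ can fail, even for continuous $\mathrm{GL}_d$-cocycles: there may be exceptional points with strictly slower growth. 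Furman's Theorem~3, which the paper invokes, gives uniformity only under an \emph{additional} hypothesis on the cocycle, and it is to verify that hypothesis that the paper needs Lemma~\ref{nolongwords}: for $\gamma\in(0,1)$ and $N$ large, the matrix $\mathcal{A}(x,N)$ has strictly \emph{positive} entries for every $x\in X_\gamma$, and this positivity is exactly what rules out exceptional points. Your proposal uses Lemma~\ref{nolongwords} only later, to transfer from $\vvv\cdot\vvv$ to $\rho$, and never deploys the positivity where it is actually load-bearing.

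The concrete place where your sketch breaks down is the invocation of Lemma~\ref{schr} for the ``matching lower bound.'' That lemma states
$\liminf_{n\to\infty}\frac{1}{nm}\sum_{k=0}^{n-1}f_m(T^kx)\ge \liminf_{n\to\infty}\frac1n f_n(x)$,
i.e.\ the Birkhoff average of $f_m$ dominates the cocycle growth rate, so combined with unique ergodicity it yields $\liminf_n \frac1n f_n(x)\le \frac1m\int f_m\,d\mu$ --- this is the \emph{upper} bound again, in the direction opposite to what you wrote. You never actually obtain $\liminf_n\inf_x\frac1n f_n(x)\ge S(\gamma)$. A further technical point the paper has to address, and which your proposal glosses over, is that Furman's uniform multiplicative ergodic theorem is stated for cocycles over \emph{homeomorphisms}, whereas $T|_{X_\gamma}$ is generally $q$-to-one on its image (or uncountable-to-one for irrational $\gamma$); the paper remedies this by passing to the natural two-sided extension $\hat X_\gamma\subset\{0,1\}^{\mathbb Z}$ and the invertible shift $\hat T$, verifying positivity of $\hat{\mathcal A}(x,N)$ there, and then pulling the uniform limit back. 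The boundary cases $\gamma=0,1$ and the passage from $\vvv\cdot\vvv$ to $\rho$ via Lemma~\ref{rho-norm} are handled correctly in your proposal and match the paper.
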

\begin{proof}
In the cases $\gamma=0$, $\gamma=1$ the lemma is trivial, since by Theorem~\ref{Xphi} the set $X_\gamma$ consists of a single point which is fixed under $T$, and the result follows by Gelfand's formula. To prove the lemma in the nontrivial cases we use a result due to A. Furman \cite{F} on uniform convergence for linear cocycles over homeomorphisms. Since in general the transformations $T \colon X_\gamma \to X_\gamma$ are not homeomorphisms, this is achieved via an auxiliary construction.

Let us fix $\gamma \in (0,1)$. Define a space of two-sided sequences ${\hat X}_\gamma \subset \{0,1\}^{\mathbb{Z}}$ as follows: the sequence $x=(x_n)_{n \in \mathbb{Z}} \in \{0,1\}^{\mathbb{Z}}$ belongs to ${\hat{X}}_{\gamma}$ if and only if there exists $\delta \in [0,1]$ such that either $x_n \equiv  \lceil (n+1)\gamma + \delta \rceil - \lceil n\gamma + \delta \rceil$ for all $n \in \mathbb{Z}$, or $x_n \equiv  \lfloor (n+1)\gamma + \delta \rfloor - \lfloor n\gamma + \delta \rfloor$ for all $n \in \mathbb{Z}$. 

It follows from the discussion subsequent to the statement of Theorem \ref{Xphi} that the two-sided sequence $(x_i)_{i \in \mathbb{Z}}$ belongs to ${\hat{X}}_\gamma$ if and only if the one-sided sequence $(x_{i+k})_{i =1}^\infty$ belongs to $X_\gamma$ for every $k \in \mathbb{Z}$. We equip ${\hat{X}}_\gamma$ with the topology it inherits from the infinite product topology on $\{0,1\}^{\mathbb{Z}}$, and define $\hat{T} \colon {\hat{X}}_\gamma \to {\hat{X}}_\gamma$ by $\hat{T}[(x_i)_{i \in \mathbb{Z}}]:=(x_{i+1})_{i \in \mathbb{Z}}$ analogously to the definition of $T$. In the same manner as for the transformation $T \colon X_\gamma \to X_\gamma$, one may show that $\hat T \colon {\hat{X}}_\gamma \to {\hat{X}}_\gamma$ is a continuous, uniquely ergodic transformation of a compact metrisable space.

Finally, we define $\hat{\mathcal{A}} \colon {\hat{X}}_\gamma \times \mathbb{Z} \to \Mat$ in the following manner: given $x=(x_i)_{i \in \mathbb{Z}} \in {\hat{X}}_\gamma$ and $n \geq 1$, we define $\hat{\mathcal{A}}(x,n):=A_{x_n}\cdots A_{x_1}$, $\hat{\mathcal{A}}(x,-n):= A_{x_{-(n-1)}}^{-1}A_{x_{-(n-2)}}^{-1}\cdots A_{x_{0}}^{-1} = \hat{\mathcal{A}}(\hat{T}^{-n}x,n)^{-1}$, and $\hat{\mathcal{A}}(x,0)=I$. It may be directly verified that $\hat{\mathcal{A}}$ is continuous and satisfies the following cocycle relation: for all $x \in {\hat{X}}_\gamma$ and $n,m \in \mathbb{Z}$, we have $\hat{\mathcal{A}}(x,n+m)= \hat{\mathcal{A}}(\hat{T}^nx,m)\hat{\mathcal{A}}(x,n)$.

Now let $N\geq 1$ be as given by Lemma~ \ref{nolongwords}. For each $x \in X_\gamma$ we have $0^N, 1^N \nprec x$. Since for each $x = (x_i)_{i \in \mathbb{Z}} \in \hat{X}_\gamma$ we have $(x_i)_{i=1}^\infty \in X_\gamma$, it follows from this that the matrix product which defines $\hat{\mathcal{A}}(x,N)$ is a product of mixed powers of $A_0$ and $A_1$, and does not simply equal $A_0^N$ or $A_1^N$. A simple calculation shows that this implies that for each $x \in {\hat{X}}_\gamma$, all of the entries of the matrix $\hat{\mathcal{A}}(x,N)$ are strictly positive. We may therefore apply \cite[Theorem 3]{F} to deduce that there exists a real number $S(\gamma)$ such that $\frac{1}{n}\log \|\hat{\mathcal{A}}(x,n)\|$ converges uniformly to $S(\gamma)$ for $x \in {\hat{X}}_\gamma$. Since clearly for each $n \geq 1$,
\[\left\{\hat{\mathcal{A}}(x,n) \colon x \in {\hat{X}}_\gamma\right\} = \left\{\mathcal{A}(x,n)\colon x \in X_\gamma\right\},\]
this implies that $\frac{1}{n}\log \|\mathcal{A}(x,n)\|$ converges uniformly to $S(\gamma)$ for $x \in X_\gamma$. Since as previously noted we have $0^N, 1^N \nprec x$ for all $x \in X_\gamma$, it follows immediately from Lemma~\ref{rho-norm} that also $\frac{1}{n}\log \rho(\mathcal{A}(x,n)) \to S(\gamma)$ uniformly over $x \in X_\gamma$. The proof is complete.
\end{proof}

\begin{lemma}\label{Sestimates}
The function $S$ has the following properties:
\begin{enumerate}[(i)]
\item
Let $\gamma=p/q \in [0,1]$, not necessarily in least terms: then $S(\gamma)=q^{-1}\log \rho(\mathcal{A}(x,q))$ for every $x \in X_{p/q}$.
\item
Let $u$ be a finite word such that $|u|=q$, $|u|_1=p$. Then $S(p/q) \geq q^{-1}\log\rho(\mathcal{A}(u))$.
\item
Let $\gamma \in [0,1]$ be irrational. Then there exist $x \in X_\gamma$ and a sequence of rational numbers $(p_n/q_n)_{n=1}^\infty$ converging to $\gamma$ such that $S(p_n/q_n)=q_n^{-1}\log \rho(\mathcal{A}(x,q_n))$ for every $n \geq 1$.
\item
For every $\gamma \in [0,1]$ we have $S(\gamma)=S(1-\gamma)$.
\end{enumerate}
\end{lemma}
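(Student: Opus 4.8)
The plan is to obtain all four parts from Lemma~\ref{Sfunction} together with the combinatorial facts assembled in Sections~\ref{sec4} and~\ref{sec5}, with (ii) and (iii) both reduced to (i). For (i), I would first note that the set $X_{p/q}$ depends only on the real number $p/q$, so on writing $p/q=p'/q'$ in lowest terms Theorem~\ref{Xphi}(iii) shows that the $T$-orbit of any $x\in X_{p/q}$ has exactly $q'$ points; hence $T^{q'}x=x$ and therefore $T^qx=x$, since $q'\mid q$. The cocycle relation then gives $\mathcal{A}(x,nq)=\mathcal{A}(x,q)^n$ for every $n\ge1$, so $\frac1{nq}\log\rho(\mathcal{A}(x,nq))=\frac1q\log\rho(\mathcal{A}(x,q))$ is independent of $n$; since by Lemma~\ref{Sfunction} the full sequence $\frac1m\log\rho(\mathcal{A}(x,m))$ converges to $S(p/q)$, restricting to $m=nq$ yields $S(p/q)=q^{-1}\log\rho(\mathcal{A}(x,q))$.

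For (ii), I would fix $p$ and $q$ and choose, among the finitely many words $v$ with $|v|=q$ and $|v|_1=p$, one maximising $\rho(\mathcal{A}(v))$. By Lemma~\ref{finite-balanced-best} the infinite word $v^\infty$ is balanced; being periodic it is recurrent, and it has $1$-ratio $|v|_1/|v|=p/q$, so $v^\infty\in X_{p/q}$. Applying (i) to $v^\infty$, together with $\mathcal{A}(v^\infty,q)=\mathcal{A}(v)$, gives $S(p/q)=q^{-1}\log\rho(\mathcal{A}(v))\ge q^{-1}\log\rho(\mathcal{A}(u))$ for the given word $u$, by the choice of $v$. For (iii), with $\gamma$ irrational, I would invoke Lemma~\ref{standard}(ii) to obtain $x\in X_\gamma$ for which $\pi_q(x)$ is a standard word for infinitely many $q$, list these as $q_1<q_2<\cdots$, and set $p_n:=|\pi_{q_n}(x)|_1$. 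Lemma~\ref{standard}(i) gives $(\pi_{q_n}(x))^\infty\in X_{p_n/q_n}$, and applying (i) to this periodic word yields $S(p_n/q_n)=q_n^{-1}\log\rho(\mathcal{A}(\pi_{q_n}(x)))=q_n^{-1}\log\rho(\mathcal{A}(x,q_n))$; finally $p_n/q_n=\varsigma(\pi_{q_n}(x))\to\varsigma(x)=\gamma$ since $q_n\to\infty$ and $x$ is balanced.

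For (iv), the plan is to exploit the symmetry interchanging $0$ and $1$. Writing $\bar\omega$ for the word (finite or infinite) obtained from $\omega$ by swapping the two symbols, and putting $P:=\left(\begin{smallmatrix}0&1\\1&0\end{smallmatrix}\right)$, a direct computation gives $PA_0P^{-1}=A_1$ and $PA_1P^{-1}=A_0$, whence $\mathcal{A}(\bar w)=P\mathcal{A}(w)P^{-1}$ for every finite word $w$ and in particular $\rho(\mathcal{A}(\bar w))=\rho(\mathcal{A}(w))$. The involution $x\mapsto\bar x$ preserves balancedness and recurrence and replaces the $1$-ratio $\gamma$ by $1-\gamma$, so it maps $X_\gamma$ bijectively onto $X_{1-\gamma}$, both of which are nonempty by Theorem~\ref{Xphi}(i). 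Choosing $x\in X_\gamma$ and letting $n\to\infty$ in the identity $\frac1n\log\rho(\mathcal{A}(\bar x,n))=\frac1n\log\rho(\mathcal{A}(x,n))$, while applying Lemma~\ref{Sfunction} to $\bar x\in X_{1-\gamma}$ on one side and to $x\in X_\gamma$ on the other, gives $S(1-\gamma)=S(\gamma)$.

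I do not expect a serious obstacle: the one genuinely substantive ingredient is Lemma~\ref{finite-balanced-best} in part (ii), which is exactly what guarantees that the periodic word of optimal spectral radius (for a prescribed length and number of ones) lies in $X_{p/q}$, so that (i) can be used to evaluate $S(p/q)$ along it. The only points requiring care are the routine verifications that $v^\infty$ and $(\pi_{q_n}(x))^\infty$ satisfy all three defining conditions of the relevant $X_\gamma$, and the consistent application of (i) with $q$ the given, possibly unreduced, denominator.
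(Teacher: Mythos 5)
Your proposal is correct and follows essentially the same route as the paper: (i) via $T$-periodicity of $x$ and the cocycle relation, (ii) by maximising $\rho(\mathcal{A}(v))$ over words of length $q$ with $p$ ones and invoking Lemma~\ref{finite-balanced-best} together with part (i), (iii) via the characteristic word and standard prefixes from Lemma~\ref{standard}, and (iv) via conjugation by $\left(\begin{smallmatrix}0&1\\1&0\end{smallmatrix}\right)$. The only cosmetic difference is in (i), where you work directly with $\rho$ along the constant subsequence $m=nq$, whereas the paper passes through $\vvv\cdot\vvv$ and applies Gelfand's formula at the end; both are equivalent given Lemma~\ref{Sfunction}.
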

\begin{proof}
(i). By Theorem~\ref{Xphi} we have $T^qx=x$ for every $x \in X_{p/q}$, and so for every $x\in X_{p/q}$,
\[S(p/q) = \lim_{n \to \infty} \frac{1}{kq}\log\Lvvv\mathcal{A}(x,kq)\Rvvv = \lim_{k \to \infty}\frac{1}{kq}\log \Lvvv\mathcal{A}(x,q)^k\Rvvv = \frac{1}{q}\log \rho\left(\mathcal{A}(x,q)\right).\]

(ii). Clearly the set of all words $v$ such that $|v|=q$ and $|v|_1=p$ is finite, so there exists a word $v$ which attains the maximum value of $\rho(\mathcal{A}(v))$ within this set. In particular we have $\rho(\mathcal{A}(v))\geq \rho(\mathcal{A}(u))$. By Lemma~\ref{finite-balanced-best} the infinite word $v^\infty \in \Sigma$ is balanced, and since it is clearly recurrent we have $v^\infty \in X_{p/q}$ by Theorem~\ref{Xphi}. By part (i) this implies $q^{-1}\log \rho(\mathcal{A}(v))=S(p/q)$ as required.

(iii)
Let $x \in X_\gamma$ be as given by Lemma~\ref{standard}(ii), and let $(q_n)_{n=1}^\infty$ be a strictly increasing sequence of natural numbers such that $\pi_{q_n}(x)$ is a standard word for every $n$. Define $p_n:=|\pi_{q_n}(x)|_1$ for each $n \geq 1$. By the definition of $X_\gamma$ we have $p_n/q_n \to \gamma$. Since each $\pi_{q_n}(x)$ is standard, $[\pi_{q_n}(x)]^\infty \in X_{p_n/q_n}$ for each $n$ by Lemma~\ref{standard}(i), and by part (i) of the present lemma this implies $S(p_n/q_n)=q_n^{-1}\log \rho(\mathcal{A}(x,q_n))$.

(iv)
For each finite or infinite word $\omega$, define $\overline{\omega}$ to be the \emph{mirror image} of $\omega$, i.e., the unique word such that $\overline{\omega}_i=1$ if and only if $\omega_i=0$. It is clear that $x \in X_\gamma$ if and only if $\overline{x}\in X_{1-\gamma}$. Define $R=\left(\begin{smallmatrix}0&1\\1&0\end{smallmatrix}\right)$ and note that $R^{-1}A_0R=A_1$ and $R^{-1}A_1R=A_0$. If $x \in X_\gamma$ and $n \geq 1$, then
\[R^{-1}\mathcal{A}(x,n)R=(R^{-1}A_{x_n}R)\cdots (R^{-1}A_{x_2}R)(R^{-1}A_{x_1}R) = \mathcal{A}(\overline{x},n)\]
and in particular $\rho(\mathcal{A}(x,n))=\rho(\mathcal{A}(\overline{x},n))$. It follows easily that $S(\gamma)=S(1-\gamma)$.
\end{proof}

\begin{lemma}\label{Sbounds}
The function $S$ satisfies $S(0)=\inf S = 0$ and $S(\frac{1}{2})=\sup S = \log \varrho(1)$.
\end{lemma}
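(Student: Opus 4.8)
The plan is to split the assertion into four pieces — $S(0)=0$, $\inf S=0$, $S(\tfrac12)=\log\varrho(1)$, and $\sup S=\log\varrho(1)$ — and to dispatch the first two by soft arguments, reducing everything else to an explicit evaluation of $\varrho(1)$.

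First I would record $S(0)=0$: by Theorem~\ref{Xphi} the set $X_0$ consists of the single fixed point $0^\infty$, and $\mathcal{A}(0^\infty,n)=A_0^n=\left(\begin{smallmatrix}1&n\\0&1\end{smallmatrix}\right)$ has spectral radius $1$, so $S(0)=0$ by Lemma~\ref{Sfunction} (equivalently by Lemma~\ref{Sestimates}(i) with $p/q=0/1$). For $\inf S=0$ it is enough to check $S(\gamma)\ge 0$ for every $\gamma$, and this follows from the determinant-one property of $A_0$ and $A_1$: every matrix $\mathcal{A}(x,n)$ has determinant $1$, and a real $2\times2$ matrix of determinant $1$ has either two real eigenvalues whose product is $1$ (so the one of larger modulus is $\ge1$) or a conjugate pair of modulus $1$; hence $\rho(\mathcal{A}(x,n))\ge1$ in all cases. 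Taking $\tfrac1n\log(\cdot)$ and letting $n\to\infty$ in Lemma~\ref{Sfunction} gives $S(\gamma)\ge0$, which together with $S(0)=0$ yields $\inf S=0$.

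Next I would evaluate $S(\tfrac12)$ using Lemma~\ref{Sestimates}(i) with $\gamma=1/2$, $q=2$: by Theorem~\ref{Xphi}(iii) one has $X_{1/2}=\{(01)^\infty,(10)^\infty\}$, and for $x=(01)^\infty$ we get $\mathcal{A}(x,2)=A_1A_0=\left(\begin{smallmatrix}1&1\\1&2\end{smallmatrix}\right)$, whence
\[
S\!\left(\tfrac12\right)=\tfrac12\log\rho(A_1A_0)=\tfrac12\log\frac{3+\sqrt5}{2}=\log\frac{1+\sqrt5}{2}.
\]
Then I would show $\varrho(1)=\tfrac{1+\sqrt5}{2}$. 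The lower bound is immediate from Lemma~\ref{bowf}, which gives $\varrho(1)\ge\rho(A_1A_0)^{1/2}=\tfrac{1+\sqrt5}{2}$. The upper bound is the one step that needs an idea, and I expect it to be the only non-routine point: the trick is that the Euclidean operator norm is already sharp. Since $A_0^*A_0=\left(\begin{smallmatrix}1&1\\1&2\end{smallmatrix}\right)$ and $A_1^*A_1=\left(\begin{smallmatrix}2&1\\1&1\end{smallmatrix}\right)$ both have spectral radius $\tfrac{3+\sqrt5}{2}$, we obtain $\vvv A_0\vvv=\vvv A_1\vvv=\tfrac{1+\sqrt5}{2}$, so submultiplicativity of $\vvv\cdot\vvv$ gives $\vvv\mathcal{A}(w)\vvv\le\bigl(\tfrac{1+\sqrt5}{2}\bigr)^{|w|}$ for every finite word $w$; since $\varrho(1)$ is independent of the norm used to compute it, $\varrho(1)\le\tfrac{1+\sqrt5}{2}$.

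Finally I would assemble the pieces: the two previous paragraphs give $\log\varrho(1)=\log\tfrac{1+\sqrt5}{2}=S(\tfrac12)$. On the other hand, for every $\gamma\in[0,1]$ and every $x\in X_\gamma$, Lemma~\ref{bowf} gives $\rho(\mathcal{A}(x,n))^{1/n}\le\varrho(1)$ for all $n$, so Lemma~\ref{Sfunction} yields $S(\gamma)\le\log\varrho(1)$ and hence $\sup S\le\log\varrho(1)$. Combining with $S(\tfrac12)=\log\varrho(1)$ gives $\log\varrho(1)=S(\tfrac12)\le\sup S\le\log\varrho(1)$, so $S(\tfrac12)=\sup S=\log\varrho(1)$, completing the proof. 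The only genuinely substantive point in the whole argument is the identity $\vvv A_0\vvv=\vvv A_1\vvv=\tfrac{1+\sqrt5}{2}$; everything else is bookkeeping with results already established.
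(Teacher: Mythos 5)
Your proof is correct, and in the key step it is genuinely simpler than the paper's. Both arguments begin by recording the identity $\vvv A_0\vvv=\vvv A_1\vvv=\rho(A_0A_1)^{1/2}=\tfrac{1+\sqrt5}{2}$, but the paper then proves the upper bound $\varrho(1)\le\tfrac{1+\sqrt5}{2}$ by a combinatorial replacement argument: it takes an arbitrary infinite word $x$ with a tail different from $(01)^\infty$, observes that $x$ must contain one of the subwords $11(01)^n1$, $11(01)^n00$, $00(10)^n0$, $00(10)^n11$, and shows via explicit Fibonacci-matrix computations that swapping each of these for a subword of $(01)^\infty$ of the same length increases $\mathcal A(\cdot)$ entry-by-entry, so no word can beat $(01)^\infty$. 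You instead observe that once $\vvv A_0\vvv=\vvv A_1\vvv=\tfrac{1+\sqrt5}{2}$ is known, submultiplicativity of the operator norm gives $\vvv\mathcal A(x,n)\vvv\le\bigl(\tfrac{1+\sqrt5}{2}\bigr)^n$ for every $x$ and $n$, and hence $\varrho(1)\le\tfrac{1+\sqrt5}{2}$ directly from the definition (or from the $n=1$ term of the infimum formula in Lemma~\ref{bowf}). This one-line observation replaces the entire subword-swapping argument, so in effect you show that the Euclidean norm is already an extremal (Barabanov-type) norm for $\mathsf A_1$. The remaining pieces — $S(0)=0$ via $X_0=\{0^\infty\}$, $S(\tfrac12)=\log\rho(A_1A_0)^{1/2}$ via Lemma~\ref{Sestimates}(i), $\inf S\ge 0$ (you from $\det=1\Rightarrow\rho\ge1$, the paper from "nonzero integer matrix has $\vvv\cdot\vvv\ge1$"), and $\sup S\le\log\varrho(1)$ from Lemma~\ref{bowf} — agree with the paper up to cosmetic variations. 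Net effect: same conclusion, but your route shortcuts the only laborious part of the paper's proof.
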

\begin{proof}
The reader may easily verify that
\begin{equation}\label{S1}\vvv A_1\vvv=\vvv A_0\vvv=\vvv A_0A_1\vvv^{\frac{1}{2}}=\vvv A_1A_0\vvv^{\frac{1}{2}}=\rho(A_0A_1)^{1/2}= \frac{1+\sqrt{5}}{2}.
\end{equation}
By Theorem~\ref{Xphi}, we have $X_{1/2}=\{(01)^\infty,(10)^\infty\}$, so by Gelfand's formula we have
\[\lim_{n \to \infty}\vvv\mathcal{A}(x,n)\vvv^{1/n} = \rho(A_0A_1)^{\frac{1}{2}} =  \rho(A_1A_0)^{\frac{1}{2}} = \frac{1+\sqrt{5}}{2}\]
when $x \in X_{1/2}$. Let us show that $\varrho(1)=\frac{1+\sqrt{5}}{2}$. In other words, we will prove that
\[
\sup\left\{\vvv\mathcal{A}(x,n)\vvv^{1/n}\colon x\in\Sigma\right\} = \lim_{n\to\infty}\vvv\mathcal{A}((01)^\infty,n)\vvv^{1/n} =\frac{1+\sqrt{5}}{2}.
\]
Suppose $x$ has a tail different from $(01)^\infty$. Then it must contain one of the following subwords: $w_1=11(01)^n1,\ w_2=11(01)^n00, w_3=00(10)^n0, w_4=00(10)^n11$ with $n\ge0$. In view of mirror symmetry, it suffices to deal with $w_1$ and $w_2$. We will show that it is possible to replace them with subwords of $(01)^\infty$,  $w_1'$ and $w_2'$ respectively, in such a way that the corresponding growth exponent does not decrease.

Namely, put $w_1'=(10)^{n+1}1$ and $w_2'=(10)^{n+2}$. It is easy to see that for $n\ge1$,
\begin{align*}
(A_0A_1)^n &= \begin{pmatrix} F_{2n} & F_{2n-1} \\ F_{2n-1} & F_{2n-2}  \end{pmatrix} \\
(A_1A_0)^n &= \begin{pmatrix} F_{2n-2} & F_{2n-1} \\ F_{2n-1} & F_{2n}  \end{pmatrix},
\end{align*}
where, as above, $(F_n)_{n=0}^\infty$ is the Fibonacci sequence (with $F_0=F_1=1$). Hence
\[
A_1^2(A_0A_1)^nA_1 = \begin{pmatrix} F_{2n+1} & F_{2n-1} \\ F_{2n+3} & F_{2n+1}  \end{pmatrix},
\]
whereas
\[
(A_1A_0)^{n+1}A_1 = \begin{pmatrix} F_{2n+2} & F_{2n+1} \\ F_{2n+3} & F_{2n+2}  \end{pmatrix},
\]
i.e., $\mathcal A(w_1')$ dominates $\mathcal A(w_1)$ entry-by-entry. Similarly,
\[
A_1^2(A_0A_1)^nA_0^2 = \begin{pmatrix} F_{2n} & F_{2n+2} \\ F_{2n+2} & F_{2n+4}  \end{pmatrix}
\]
and
\[
(A_1A_0)^{n+2} = \begin{pmatrix} F_{2n+2} & F_{2n+3} \\ F_{2n+3} & F_{2n+4}  \end{pmatrix}.
\]
Thus, $\varrho(1)= \frac{1+\sqrt{5}}{2}=e^{S(\frac{1}{2})}$, and since clearly $S(\gamma)\leq \log\varrho(1)$ for every $\gamma \in [0,1]$ this implies that $\sup S = S(1/2)$. On the other hand, it is clear that $X_0$ contains a single point $x$ corresponding to an infinite sequence of zeroes, and for this $x$ we have $S(0)=\log \rho(A_0)=0$. Finally, since every matrix $\mathcal{A}(x,n)$ is an integer matrix which has determinant one and is hence nonzero, every $x \in \Sigma$ has $\frac{1}{n}\log \vvv\mathcal{A}(x,n)\vvv \geq 0$ for all $n$ and therefore $S(\gamma)\geq 0$ for every $\gamma$.
\end{proof}

\begin{lemma}\label{Sconcave}
The restriction of $S$ to $(0,1)\cap\mathbb{Q}$ is concave in the following sense: if $\gamma_1,\gamma_2,\lambda \in (0,1) \cap \mathbb{Q}$ then $S(\lambda \gamma_1+(1-\lambda)\gamma_2) \geq \lambda S(\gamma_1)+(1-\lambda)S(\gamma_2)$.
\end{lemma}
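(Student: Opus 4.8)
The plan is to reduce the claimed concavity to a single uniform estimate on spectral radii of mixed matrix products, which we prove via non-negativity, and then to remove an unwanted additive constant by an iteration. First I would introduce common denominators: write $\gamma_1=p_1/q$, $\gamma_2=p_2/q$ with $q\geq2$ and $0<p_1,p_2<q$, and $\lambda=m/n$ with $0<m<n$. Choose any $x_1\in X_{p_1/q}$ and $x_2\in X_{p_2/q}$, which are nonempty by Theorem~\ref{Xphi}; since these sets consist of $q$-periodic words, $x_i=u_i^\infty$ where $u_i:=\pi_q(x_i)$. Put $B:=\mathcal A(u_1)$ and $C:=\mathcal A(u_2)$, which are non-negative integer matrices of determinant one; by Lemma~\ref{Sestimates}(i), $S(\gamma_1)=q^{-1}\log\rho(B)$ and $S(\gamma_2)=q^{-1}\log\rho(C)$. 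Using Lemma~\ref{nolongwords}, fix an integer $N\geq2$ such that neither $0^N$ nor $1^N$ is a subword of $x_1$ or of $x_2$, and hence of any power $u_1^k$ or $u_2^k$.

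The heart of the matter is the estimate $\rho(C^jB^i)\geq(4N^2)^{-1}\rho(B)^i\rho(C)^j$ for all $i,j\geq1$. Since $C^jB^i$ is non-negative, $\rho(C^jB^i)\geq\tfrac12\tr(C^jB^i)$; expanding the trace of the product of the non-negative matrices $C^j$ and $B^i$ and keeping only the diagonal contributions gives $\tr(C^jB^i)\geq\mathfrak{d}(C^j)\tr(B^i)$. Now $C^j=\mathcal A(u_2^j)$ contains no run $0^N$ or $1^N$, so Lemma~\ref{rho-norm} gives $\mathfrak{d}(C^j)\geq(2N^2)^{-1}\vvv C^j\vvv\geq(2N^2)^{-1}\rho(C)^j$; and $B$ is a non-negative $2\times2$ matrix of determinant one, so its eigenvalues are real and positive with product $1$, whence $\tr(B^i)=\rho(B)^i+\rho(B)^{-i}\geq\rho(B)^i$. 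Multiplying gives the claim. I expect this estimate — in particular the fact that the constant may be chosen independent of $i$ and $j$ — to be the main obstacle; it is precisely where one exploits that the matrix products along balanced words have all diagonal entries comparable to the operator norm (Lemma~\ref{rho-norm}) together with the run-length bound.

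Finally I would amplify to finish. For each $K\geq1$, the word $w:=u_1^{Km}u_2^{K(n-m)}$ has length $Knq$, $1$-ratio $(mp_1+(n-m)p_2)/(nq)=\lambda\gamma_1+(1-\lambda)\gamma_2=:\gamma_*$, and $\mathcal A(w)=C^{K(n-m)}B^{Km}$. Applying Lemma~\ref{Sestimates}(ii) to $w$ and then the estimate above,
\[
S(\gamma_*)\ \geq\ \frac{1}{Knq}\log\rho(\mathcal A(w))\ \geq\ -\frac{\log(4N^2)}{Knq}+\frac{m}{nq}\log\rho(B)+\frac{n-m}{nq}\log\rho(C),
\]
and the last two terms are precisely $\lambda S(\gamma_1)+(1-\lambda)S(\gamma_2)$. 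Letting $K\to\infty$ eliminates the error term, giving $S(\gamma_*)\geq\lambda S(\gamma_1)+(1-\lambda)S(\gamma_2)$, as required; the sole role of $K$ is to dilute the constant $\log(4N^2)$, which one must incur at finite length.
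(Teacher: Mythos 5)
Your proof is correct and follows essentially the same route as the paper: bound $\rho$ of a mixed product $C^jB^i$ from below by a uniform constant times $\rho(B)^i\rho(C)^j$ using non-negativity and the diagonal-entry control from Lemma~\ref{rho-norm}, then amplify via Lemma~\ref{Sestimates}(ii) and let the power tend to infinity to kill the additive constant. The only differences are cosmetic — you pass to a common denominator rather than least terms, and you save a factor by using $\tr(B^i)\geq\rho(B)^i$ for the determinant-one matrix $B$ where the paper applies the $\mathfrak{d}\gtrsim\rho$ bound to both factors.
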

\begin{proof}
For $i=1,2$ let $\gamma_i = p_i/q_i$ in least terms, and let $\lambda =k/m$. Let $M=\max\{q_1,q_2\}$. As a consequence of Lemma~\ref{Sestimates}(i) there exist finite words $u^{(1)},u^{(2)} \in \Omega$ such that $|u^{(i)}|_1=p_i$, $|u^{(i)}|=q_i$ and $S(\gamma_i) = q_i^{-1}\log \mathcal{A}(u^{(i)})$ for each $i$.

 Since $0<\gamma_1,\gamma_2<1$ we have $0<|p_i|<|q_i|$ and therefore $0^M, 1^M \nprec (u^{(i)})^\ell$ for $i=1,2$ and every $\ell \geq 1$. In particular, for each $\ell_1,\ell_2 \geq 1$ the word $(u^{(1)})^{\ell_1} (u^{(2)})^{\ell_2}$ does not have $0^{2M}$ or $1^{2M}$ as a subword, and hence by Lemma~\ref{rho-norm},
 \begin{align*}
\rho\bigl(\mathcal{A}(u^{(1)})^{\ell_1}(u^{(2)})^{\ell_2}\bigr) & \geq \mathfrak{d}\left(\mathcal{A}(u^{(1)})^{\ell_1}(u^{(2)})^{\ell_2}\right) \geq \mathfrak{d}(\mathcal{A}(u^{(1)})^{\ell_1})\mathfrak{d}(\mathcal{A} ((u^{(2)})^{\ell_2}) \\ &\geq \frac{1}{64M^4}\rho\left(\mathcal{A}(u^{(1)})^{\ell_1}\right) \rho\left(\mathcal{A}(u^{(2)})^{\ell_2}\right).
\end{align*}
Applying this inequality together with Lemma~\ref{Sestimates}(ii), for each $n \geq 1$ we obtain
\begin{align*}
S(\lambda \gamma_1 + (1-\lambda)\gamma_2) &= S\left(\frac{kp_1q_2 + (m-k)q_1p_2}{mq_1q_2}\right)\\
&\geq \frac{1}{nmq_1q_2}\log \rho(\mathcal{A}((u^{(1)})^{nkq_2}(u^{(2)})^{n(m-k)q_1}))\\
&\geq \frac{1}{nmq_1q_2}\left(\log \rho(\mathcal{A}((u^{(1)})^{nkq_2})) + \log \rho(\mathcal{A}((u^{(2)})^{n(m-k)q_1})) - \log 64M^4\right)\\
&= \frac{k}{mq_1}\log \rho(\mathcal{A}(u^{(1)})) + \frac{m-k}{mq_2}\log \rho(\mathcal{A}(u^{(2)})) - \frac{\log 64M^4}{nmq_1q_2}\\
&= \lambda S(\gamma_1) + (1-\lambda)S(\gamma_2) -\frac{\log 64M^4}{nmq_1q_2}.
\end{align*}
Taking the limit as $n \to \infty$ we obtain the desired result.
\end{proof}

\begin{lemma}\label{Smain}
The function $S \colon [0,1]\to \mathbb{R}$ is continuous and concave.
\end{lemma}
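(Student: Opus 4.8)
I would first establish that $S$ is continuous and concave on the open interval $(0,1)$, using Lemma~\ref{Sconcave} together with the boundedness from Lemma~\ref{Sbounds}, and then handle the endpoints $0$ and $1$ separately. For the first part, note that by Lemma~\ref{Sbounds} we have $0\le S(\gamma)\le\log\varrho(1)$ for every $\gamma$, and by Lemma~\ref{Sconcave} the restriction $g:=S|_{(0,1)\cap\mathbb{Q}}$ is \emph{rationally concave}, i.e. $g(\lambda\gamma_1+(1-\lambda)\gamma_2)\ge\lambda g(\gamma_1)+(1-\lambda)g(\gamma_2)$ for all rational $\gamma_1,\gamma_2,\lambda$ in $(0,1)$. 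I would then invoke the routine real-analysis fact that a bounded rationally concave function on $(0,1)\cap\mathbb{Q}$ is Lipschitz on $[a,b]\cap\mathbb{Q}$ for every $[a,b]\subset(0,1)$: for rationals $a'<a\le r<s\le b<b'$ one writes $s$ as a rational convex combination of $r$ and $b'$ and $r$ as a rational convex combination of $a'$ and $s$, and applies rational concavity together with $0\le g\le\log\varrho(1)$ to bound $|g(s)-g(r)|/(s-r)$ by a constant depending only on $a',a,b,b'$. Hence $g$ extends to a unique continuous function $\tilde S$ on $(0,1)$, and letting rationals tend to arbitrary reals in the defining inequality shows $\tilde S$ is concave on $(0,1)$.

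Next I would show $\tilde S=S$ on $(0,1)$. On rational points this is immediate. For irrational $\gamma\in(0,1)$ I would use Lemma~\ref{Sestimates}(iii) to produce $x\in X_\gamma$ and rationals $p_n/q_n\to\gamma$ with $S(p_n/q_n)=q_n^{-1}\log\rho(\mathcal{A}(x,q_n))$ for all $n$. Since $x\in X_\gamma$, Lemma~\ref{Sfunction} gives $q_n^{-1}\log\rho(\mathcal{A}(x,q_n))\to S(\gamma)$, so $S(p_n/q_n)\to S(\gamma)$; on the other hand $S(p_n/q_n)=\tilde S(p_n/q_n)\to\tilde S(\gamma)$ by continuity of $\tilde S$. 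Hence $S(\gamma)=\tilde S(\gamma)$, and $S$ coincides with the continuous concave function $\tilde S$ throughout $(0,1)$.

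It then remains to treat the endpoints. By Lemma~\ref{Sbounds} we have $S(0)=0$, and $S(1)=S(0)=0$ by Lemma~\ref{Sestimates}(iv). Since $S$ is bounded and concave on $(0,1)$, the limit $\ell:=\lim_{\gamma\to0^+}S(\gamma)$ exists, with $\ell\ge0$ because $S\ge0$. To identify $\ell$ I would evaluate $S$ along $\gamma=1/q$: by Lemma~\ref{Sestimates}(i) applied to the word $x=(10^{q-1})^\infty\in X_{1/q}$ we get $S(1/q)=q^{-1}\log\rho(\mathcal{A}(u))$ with $u=10^{q-1}$, where $\mathcal{A}(u)=A_0^{q-1}A_1=\left(\begin{smallmatrix}q&q-1\\1&1\end{smallmatrix}\right)$, so $0\le S(1/q)\le q^{-1}\log\vvv\mathcal{A}(u)\vvv\le q^{-1}\log(2q)\to0$; since $1/q\to0$ this forces $\ell=0=S(0)$, so $S$ is continuous at $0$, and at $1$ by the symmetry $S(\gamma)=S(1-\gamma)$. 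Finally, $S$ is continuous on $[0,1]$ and concave on $(0,1)$, so approximating arbitrary $\gamma_1,\gamma_2\in[0,1]$ by interior points and passing to the limit via continuity yields $S(\lambda\gamma_1+(1-\lambda)\gamma_2)\ge\lambda S(\gamma_1)+(1-\lambda)S(\gamma_2)$ for all $\lambda\in[0,1]$, completing the proof.

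The delicate point is the middle step. The uniform convergence in Lemma~\ref{Sfunction} holds over $X_\gamma$ only for each \emph{fixed} $\gamma$ and gives no control as $\gamma$ varies, so continuity of $S$ at irrational arguments cannot be read off directly from it; what makes the argument go through is the special structure supplied by Lemma~\ref{Sestimates}(iii) -- a single orbit in $X_\gamma$ whose standard prefixes of lengths $q_n$ simultaneously realise the extremal values $S(p_n/q_n)$ -- which is exactly what lets the rational data pin down the irrational values. The remaining ingredients (the fact about bounded rationally concave functions, and the elementary endpoint estimate) are routine.
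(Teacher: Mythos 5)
Your argument is correct and follows essentially the same path as the paper's: extend $S$ from $(0,1)\cap\mathbb{Q}$ to a continuous concave function on $(0,1)$ using the rational concavity of Lemma~\ref{Sconcave} and the bounds of Lemma~\ref{Sbounds}, identify the extension with $S$ at irrationals via Lemma~\ref{Sestimates}(iii) and Lemma~\ref{Sfunction}, and treat the endpoints by computing $S(1/q)\to 0$. The only cosmetic difference is that the paper defines the auxiliary function $\widetilde S$ upfront as a limsup of nearby rational values (then proves concavity and continuity), whereas you first invoke the standard fact that a bounded rationally concave function is locally Lipschitz on compact subintervals to obtain the continuous extension directly; both routes rest on the same convexity estimates.
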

\begin{proof}
By Lemma~\ref{Sconcave}, the restriction of $S$ to $(0,1)\cap\mathbb{Q}$ is concave. Define a function $\widetilde S \colon [0,1] \to \mathbb{R}$ by
\[\widetilde S(\gamma) := \lim_{\varepsilon \to 0}\sup\left\{S(\gamma_*) \colon \gamma_* \in (0,1) \cap \mathbb{Q} \text{ and }|\gamma_*-\gamma|<\varepsilon\right\}.\]
Note that $\widetilde S$ is well-defined since $S$ is bounded by Lemma~ \ref{Sbounds}. We shall show in several stages that $\widetilde S$ is continuous, concave, and equal to $S$ throughout $[0,1]$. 

We first shall show that $\widetilde S$ is concave. Let $\gamma_1,\gamma_2,\lambda \in [0,1]$, and choose sequences of rationals $\left(\gamma_1^{(n)}\right)$, $\left(\gamma_2^{(n)}\right)$ and $\left(\lambda_n\right)$ belonging to $(0,1)$,  converging respectively to $\gamma_1, \gamma_2$ and $\lambda$, such that $\lim_{n \to \infty} S\left(\gamma_i^{(n)}\right) = \widetilde S(\gamma_i)$ for $i=1,2$. We then have
\begin{align*}\widetilde S\left(\lambda \gamma_1 + \left(1-\lambda\right)\gamma_2\right) &\geq \limsup_{n \to \infty} S\left(\lambda_n \gamma_1^{(n)} + \left(1-\lambda_n\right)\gamma_2^{(n)}\right)\\
&\geq \limsup_{n \to \infty} \lambda_n S\left(\gamma_1^{(n)}\right) + \left(1-\lambda_n\right)S\left(\gamma_2^{(n)}\right)\\
&= \lim_{n \to \infty}\lambda_n S\left(\gamma_1^{(n)}\right) + \left(1-\lambda_n\right)S\left(\gamma_2^{(n)}\right)\\& = \lambda \widetilde S(\gamma_1)+\left(1-\lambda\right)\widetilde S(\gamma_2)\end{align*}
using Lemma~\ref{Sconcave}, and $\widetilde S$ is concave as claimed. In particular the restriction of $\widetilde S$ to the interval $(0,1)$ is continuous (see for example \cite[Thm 10.3]{Rock}).

We next claim that $\widetilde S(\gamma)=S(\gamma)$ for rational values $0<\gamma<1$. Given $\gamma \in (0,1) \cap \mathbb{Q}$, choose a sequence of rationals $(\gamma_n)$ such that $\gamma_n \to \gamma$ and $S(\gamma_n) \to \widetilde S(\gamma)$. If $0<\gamma \leq \gamma_n$ for some $n$ then
\[S(\gamma) \geq  \left(1-\frac{\gamma}{\gamma_n}\right)S(0)+ \frac{\gamma}{\gamma_n}S(\gamma_n)= \frac{\gamma}{\gamma_n}S(\gamma_n),\] and similarly if $\gamma_n<\gamma < 1$ then
\[S(\gamma) \geq \left(\frac{1-\gamma}{1-\gamma_n}\right)S(\gamma_n) +\left(\frac{\gamma-\gamma_n}{1-\gamma_n}\right)S(1) \geq \left(\frac{1-\gamma}{1-\gamma_n}\right)S(\gamma_n).\]
It follows that by taking the limit as $n \to \infty$ we may obtain $S(\gamma) \geq \widetilde S(\gamma)$, and the converse inequality $\widetilde S(\gamma) \geq S(\gamma)$ is obvious from the definition of $\widetilde S$. This proves the claim.

We now claim that $\lim_{\gamma \to 0} \widetilde S(\gamma)=\widetilde S(0)=0=S(0)$ and $\lim_{\gamma \to 1}\widetilde S(\gamma)=\widetilde S(1)=0=S(1)$. Since $S(\gamma)=S(1-\gamma)$ for every $\gamma \in [0,1]$ by Lemma~\ref{Sestimates}(iv) it is sufficient to prove only the first assertion. By Lemma~\ref{Sbounds} we have $S(0)=\inf S =0 $ and therefore $\inf \widetilde S \geq 0$. Since $\widetilde S$ is concave there must exist $\delta>0$ such that the restriction of $\widetilde S$ to $[0,\delta)$ is monotone, and so if we can show that $\lim_{n \to \infty} \widetilde S(1/n)=0$ then the desired result will follow. By the preceding claim it is sufficient to show that $\lim_{n \to \infty} S(1/n)=0$. For each $n \geq 1$ it is easily verified using Lemma~\ref{cyclic-balanced} that $(0^n1)^\infty \in X_{1/n}$, so using Lemma~\ref{Sestimates}(i) we may estimate
\[0 \leq S\left(\frac{1}{n}\right)=\frac{1}{n+1}\log \rho(A_0^n A_1)  \leq \frac{1}{n+1} \log \tr (A_0^n A_1) = \frac{\log (n+2)}{n+1}\]
and therefore $S(1/n) \to 0$. This completes the proof of the claim.

To complete the proof of the lemma it suffices to show that in fact $\widetilde S(\gamma) = S(\gamma)$ when $\gamma$ is irrational. Given $\gamma \in [0,1] \setminus \mathbb{Q}$, let $x \in X_\gamma$ and $(p_n/q_n)_{n=1}^\infty$ be as given by Lemma~\ref{Sestimates}(iii). Since $\widetilde S$ is continuous and agrees with $S$ on the rationals, we may apply parts (iii) and (i) of Lemma~\ref{Sestimates} to obtain
\[S(\gamma)= \lim_{n \to \infty} \frac{1}{q_n}\log \rho(\mathcal{A}(x,q_n)) = \lim_{n \to \infty} S\left(\frac{p_n}{q_n}\right) = \lim_{n \to \infty}\widetilde S\left(\frac{p_n}{q_n}\right) = \widetilde S(\gamma),\]
and we conclude that $\widetilde S \equiv S$ as desired.
\end{proof}

To conclude the proof of Proposition~\ref{Sproposition}, we note that the function $S$ being non-decreasing on $\left[0,\frac12\right]$ follows from its concavity and the fact that $\max\limits_{\gamma\in[0,1/2]} S(\gamma)=S(1/2)$.

\section{Proof of Theorem~\ref{technical}}
\label{sec7}

Before commencing the proof of Theorem~\ref{technical}, we require the following simple lemma:
\begin{lemma}\label{techpf}
For each $\alpha \in [0,1]$ we have $\varrho(\alpha) \geq e^{S(\gamma)}\alpha^\gamma$ for all $\gamma \in [0,1]$. If $\alpha \in (0,1]$ and $X_\gamma \cap Z_\alpha \neq \emptyset$, then $X_\gamma \subseteq Z_\alpha$ and $\varrho(\alpha)=e^{S(\gamma)}\alpha^\gamma$.
\end{lemma}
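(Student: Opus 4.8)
The plan is to prove the two assertions in turn: the inequality $\varrho(\alpha)\ge e^{S(\gamma)}\alpha^\gamma$ follows immediately from the lower estimate for $\varrho$ in Lemma~\ref{bowf} together with the growth rate identified in Proposition~\ref{Sproposition}, while the equality and the inclusion $X_\gamma\subseteq Z_\alpha$ need in addition a matching upper bound and an application of minimality.

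For the first assertion, fix $\gamma\in[0,1]$ and, by Theorem~\ref{Xphi}(i), choose any $x\in X_\gamma$. Since $\mathcal{A}^{(\alpha)}(x,n)=\alpha^{|\pi_n(x)|_1}\mathcal{A}(x,n)$ and $|\pi_n(x)|_1=n\,\varsigma(\pi_n(x))$ with $\varsigma(\pi_n(x))\to\gamma$, Proposition~\ref{Sproposition} gives
\[
\lim_{n\to\infty}\rho\bigl(\mathcal{A}^{(\alpha)}(x,n)\bigr)^{1/n}=\alpha^\gamma e^{S(\gamma)}
\]
(with the convention $0^0:=1$; the case $\alpha=0$ is in any event elementary). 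As Lemma~\ref{bowf} yields $\varrho(\alpha)\ge\rho(\mathcal{A}^{(\alpha)}(x,n))^{1/n}$ for every $n$, letting $n\to\infty$ gives $\varrho(\alpha)\ge e^{S(\gamma)}\alpha^\gamma$.

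For the second assertion, assume $\alpha\in(0,1]$ and $X_\gamma\cap Z_\alpha\ne\emptyset$, and fix $x\in X_\gamma\cap Z_\alpha$. First I would observe that necessarily $\gamma\in(0,1)$: otherwise $X_\gamma$ is the single fixed point $0^\infty$ or $1^\infty$, for which $\|\mathcal{A}^{(\alpha)}(x,n)\|_\alpha^{1/n}$ tends to $\rho(A_0)=1$ or to $\alpha\rho(A_1)=\alpha\le1$, contradicting $\varrho(\alpha)>1$ (Lemma~\ref{gtr1}) in view of $x\in Z_\alpha$. Since $\gamma\in(0,1)$, Lemma~\ref{nolongwords} supplies $N\ge2$ with $0^N,1^N\nprec x$, hence $0^N,1^N\nprec\pi_n(x)$ for all $n$; Lemma~\ref{rho-norm} together with the equivalence of the norms $\vvv\cdot\vvv$ and $\|\cdot\|_\alpha$ then gives a constant $c>0$ with $\rho(\mathcal{A}^{(\alpha)}(x,n))\ge c\,\|\mathcal{A}^{(\alpha)}(x,n)\|_\alpha=c\,\varrho(\alpha)^n$ for all $n$, the last step because $x\in Z_\alpha$. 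Taking $n$th roots, letting $n\to\infty$, and comparing with the limit found above gives $\alpha^\gamma e^{S(\gamma)}\ge\varrho(\alpha)$; with the first assertion this yields $\varrho(\alpha)=e^{S(\gamma)}\alpha^\gamma$. Finally, to obtain $X_\gamma\subseteq Z_\alpha$ I would combine Lemma~\ref{Zset} (so that the forward orbit $\{T^nx:n\ge0\}$ lies in $Z_\alpha$ and $Z_\alpha$ is closed) with the minimality of $T|_{X_\gamma}$ from Theorem~\ref{Xphi}(ii): the closure of that forward orbit is all of $X_\gamma$, whence $X_\gamma\subseteq Z_\alpha$.

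I expect the only delicate point to be the lower bound $\rho(\mathcal{A}^{(\alpha)}(x,n))\ge c\,\varrho(\alpha)^n$ in the second part — it is precisely here that balancedness of $x$ enters in an essential way, via Lemmas~\ref{nolongwords} and~\ref{rho-norm}, and one must first dispose of the degenerate ratios $\gamma\in\{0,1\}$. The remaining steps are a direct assembly of Lemma~\ref{bowf}, Proposition~\ref{Sproposition}, Lemma~\ref{Zset}, and Theorem~\ref{Xphi}.
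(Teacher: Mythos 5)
Your proof is correct and follows the same overall strategy as the paper: bound $\varrho(\alpha)$ from below by the growth rate along a single $x\in X_\gamma$, obtain the matching upper bound from $x\in X_\gamma\cap Z_\alpha$, and deduce $X_\gamma\subseteq Z_\alpha$ via minimality together with $TZ_\alpha\subseteq Z_\alpha$ and closedness of $Z_\alpha$. The one place you diverge is the middle step. The paper notes that the definition of $Z_\alpha$ gives $\left\|\mathcal{A}^{(\alpha)}(x,n)\right\|_\alpha=\varrho(\alpha)^n$ exactly, and since Proposition~\ref{Sproposition} also characterises $S(\gamma)$ as $\lim_n\frac1n\log\Lvvv\mathcal{A}(x,n)\Rvvv$, norm equivalence gives $S(\gamma)+\gamma\log\alpha=\log\varrho(\alpha)$ directly with no further machinery. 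You instead route through the spectral radius, which forces you to invoke Lemma~\ref{rho-norm}, and that in turn requires first excluding the degenerate ratios $\gamma\in\{0,1\}$ via Lemma~\ref{gtr1}. This detour is logically sound, but it is unnecessary: Proposition~\ref{Sproposition} already states the norm version and the spectral-radius version of the limit as equal, so using the norm version lets you bypass both Lemma~\ref{rho-norm} and the case analysis. In short, same argument, with a small amount of superfluous work in the upper-bound step.
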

\begin{proof}
In the case $\alpha=0$, an easy calculation using Proposition \ref{Sproposition} and the definition of $\varrho$ shows that $\varrho(\alpha)=\rho(A_0)=1=e^{S(0)}$. It is therefore clear in this case that $\varrho(\alpha)=e^{S(\gamma)}\alpha^\gamma$ if and only if $\gamma=0$. For the rest of the proof let us fix $\alpha \in (0,1]$ and $\gamma \in [0,1]$. For each $x \in X_\gamma$, we have
\begin{align*}\log \varrho(\alpha) &=\limsup_{n \to \infty} \sup\left\{\frac{1}{n}\log \Lvvv\mathcal{A}^{(\alpha)}(z,n)\Rvvv \colon z \in \Sigma\right\}\geq \lim_{n \to \infty} \frac{1}{n}\log \Lvvv\mathcal{A}^{(\alpha)}(x,n)\Rvvv \\&= \lim_{n \to \infty} \left(\frac{1}{n}\log \vvv\mathcal{A}(x,n)\vvv + \varsigma(\pi_n(x))\log \alpha\right) = S(\gamma)+\gamma \log \alpha\end{align*}
so that $\varrho(\alpha) \geq e^{S(\gamma)}\alpha^\gamma$. If $x \in X_\gamma \cap Z_\alpha$ then by the definition of $Z_\alpha$ we have
\[S(\gamma)+\gamma\log\alpha=\lim_{n \to \infty} \frac{1}{n}\log \Lvvv\mathcal{A}^{(\alpha)}(x,n)\Rvvv = \lim_{n \to \infty} \frac{1}{n}\log \left\|\mathcal{A}^{(\alpha)}(x,n)\right\|_\alpha = \log \varrho(\alpha)\]
so that $\varrho(\alpha)=e^{S(\gamma)}\alpha^\gamma$, and since by Theorem~\ref{Xphi} the restriction of $T$ to $X_\gamma$ is minimal it is clear that $X_\gamma \subseteq Z_\alpha$.
\end{proof}
We also require the following lemma, which is an easy consequence of a result in \cite{BTV}:
\begin{lemma}\label{varsi}
Let $\alpha \in [0,1]$ and let $u,v$ be nonempty finite words such that $\rho(\mathcal{A}^{(\alpha)}(u))^{1/|u|} =\rho(\mathcal{A}^{(\alpha)}(v))^{1/|v|} =\varrho(\alpha)$. Then $\varsigma(u)=\varsigma(v)$.
\end{lemma}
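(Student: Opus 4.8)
The plan is to deduce the statement from the analysis of the family $\mathsf{A}_\alpha$ already carried out by Blondel, Theys and Vladimirov, the only genuine work being a change of notation. First I would record the trivial identity $\mathcal{A}^{(\alpha)}(w)=\alpha^{|w|_1}\mathcal{A}(w)$, valid for every finite word $w$, so that the hypothesis rewrites as
\[\alpha^{\varsigma(u)}\rho\!\left(\mathcal{A}(u)\right)^{1/|u|}=\alpha^{\varsigma(v)}\rho\!\left(\mathcal{A}(v)\right)^{1/|v|}=\varrho(\alpha).\]
The case $\alpha=0$ is immediate, since $\mathcal{A}^{(0)}(w)=0$ as soon as $|w|_1\geq 1$ while $\rho(A_0^n)=1=\varrho(0)$, so the hypothesis forces $u=0^{|u|}$, $v=0^{|v|}$ and $\varsigma(u)=\varsigma(v)=0$. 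For $\alpha\in(0,1]$, Lemma~\ref{gtr1} gives $\varrho(\alpha)>1\geq\alpha$, which rules out $u$ or $v$ being a power of $A_0$ (then $\rho(\mathcal{A}^{(\alpha)}(u))=1$) or of $\alpha A_1$ (then $\rho(\mathcal{A}^{(\alpha)}(u))=\alpha^{|u|}$); hence $\varsigma(u),\varsigma(v)\in(0,1)$. This is exactly the setting of \cite{BTV}, and the lemma is an immediate consequence of their description of the optimal periodic products of $\mathsf{A}_\alpha$: matching their normalisation of $\{A_0,\alpha A_1\}$, and checking that their notion of an extremal periodic product coincides with $\rho(\mathcal{A}^{(\alpha)}(u))^{1/|u|}=\varrho(\alpha)$ and that their ratio is our $\varsigma$, is routine and finishes the proof.

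It is illuminating to record what is actually being used, since it also indicates the shortest self-contained route. Combining Lemma~\ref{Sestimates}(ii) with the hypothesis gives
\[\log\varrho(\alpha)=\varsigma(u)\log\alpha+\tfrac1{|u|}\log\rho(\mathcal{A}(u))\leq\varsigma(u)\log\alpha+S(\varsigma(u)),\]
while Lemma~\ref{techpf} supplies the reverse inequality $\log\varrho(\alpha)\geq S(\gamma)+\gamma\log\alpha$ for every $\gamma\in[0,1]$. Hence both $\varsigma(u)$ and $\varsigma(v)$ maximise the concave function $\gamma\mapsto S(\gamma)+\gamma\log\alpha$ on $[0,1]$, and the lemma is precisely the assertion that this maximiser is unique, i.e.\ that the graph of $S$ contains no line segment of slope $-\log\alpha$ between $\varsigma(u)$ and $\varsigma(v)$. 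Excluding such a segment is exactly the information that the reversal identity $\mathcal{A}(\tilde w)-\mathcal{A}(w)=k(w)J$ of Lemma~\ref{beeteevee} and the resulting strict gain in Lemma~\ref{sot2} are designed to provide.

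I expect the main obstacle to be obtaining this exclusion without essentially reproving part of \cite{BTV}. A self-contained argument would imitate the iteration in the proof of Proposition~\ref{balanced1}, applied not to a single recurrent word but to a long word obtained by alternately concatenating blocks drawn from extremal words of $1$-ratios $\varsigma(u)$ and $\varsigma(v)$: such a word is unbalanced near every seam, yet because $u^\infty,v^\infty\in Z_\alpha$ (which follows readily from the hypothesis and submultiplicativity), Lemma~\ref{rho-norm} still bounds the quantity $\mathfrak{d}$ of its matrix product below by a near-optimal exponential, so repeated reversals via Lemma~\ref{sot2} ought to push its trace above $\varrho(\alpha)^{n}$, where $n$ is its length, contradicting submultiplicativity. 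The delicate point is that the gain per reversal must not decay faster than the number of available reversals grows, and controlling this is precisely the bookkeeping already performed in \cite{BTV}; accordingly I would present the proof as the short reduction above together with a precise pointer to their statement.
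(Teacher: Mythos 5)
Your primary route matches the paper: the paper also reduces this to \cite{BTV}, specifically to their Lemma~4.4, which shows that if $\alpha\in J_u\cap J_v$ (where $J_\omega:=\{\alpha:\rho(\mathcal{A}^{(\alpha)}(\omega))=\varrho(\alpha)^{|\omega|}\}$, exactly the sets your hypothesis places $\alpha$ in) then $u$ and $v$ are ``essentially equal'' and hence have the same $1$-ratio, so the translation you worry might be delicate is in fact immediate. Your auxiliary reformulation via maximisers of $\gamma\mapsto S(\gamma)+\gamma\log\alpha$ is a nice gloss, but as you yourself note it cannot be made self-contained here without circularity (strict concavity of $S$ is only derived later, after Theorem~\ref{technical}, which itself uses this lemma), so the appeal to \cite{BTV} is the right call.
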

\begin{proof}
In \cite{BTV}, Blondel, Theys and Vladimirov define two nonempty finite words $u,v$ to be \emph{essentially equal} if there exist finite words $a,b$ such that $au^\infty = bv^\infty$. In particular it is clear that if $u$ and $v$ are essentially equal then necessarily $\varsigma(u)=\varsigma(v)$. Blondel \emph{et al.} then associate to each nonempty finite word $\omega$ the set $J_\omega=\{\alpha \in [0,1] \colon \mathcal{A}^{(\alpha)}(\omega) = \varrho(\alpha)^{|\omega|}\}$. In \cite[Lemma ~4.4]{BTV} it is shown that if $J_u \cap J_v \neq \emptyset$ then $u$ and $v$ are essentially equal. We deduce from this that if $u$ and $v$ are nonempty finite words which satisfy $\rho(\mathcal{A}^{(\alpha)}(u))^{1/|u|} =\rho(\mathcal{A}^{(\alpha)}(v))^{1/|v|} =\varrho(\alpha)$ for some fixed $\alpha \in [0,1]$, then $\alpha \in J_u \cap J_v$ by definition; this implies that $u$ and $v$ are essentially equal, and therefore $\varsigma(u)=\varsigma(v)$.
\end{proof}
Now we are ready to prove Theorem~\ref{technical}.

\medskip\noindent \textbf{1. Existence of $\mathfrak r$.}
We shall begin by showing that for each $\alpha \in (0,1]$ there exists a unique $\gamma \in [0,1]$ such that $X_{\gamma} \cap Z_\alpha \neq \emptyset$. Let $\alpha \in (0,1]$. By Lemma~\ref{Zset} the set $Z_\alpha$ is compact and invariant under $T$, and this implies that it contains a recurrent point (see e.g. \cite[p.130]{KH}). It follows by Proposition~\ref{balanced1} that $Z_\alpha$ contains a recurrent balanced infinite word, and hence there exists $\gamma_\alpha \in [0,1]$ such that $X_{\gamma_\alpha} \cap Z_\alpha \neq \emptyset$. By Lemma~ \ref{techpf} it follows that $e^{S(\gamma_\alpha)}\alpha^{\gamma_\alpha}=\varrho(\alpha)$. We claim that $\gamma_\alpha$ is the unique element of $ [0,1]$ with this property. By Lemma~\ref{techpf} this further implies that $X_\gamma \cap Z_\alpha = \emptyset$ when $\gamma\neq\gamma_\alpha$.

To prove this claim, let us suppose that $0 \leq \gamma_1 < \gamma_2 \leq 1$ with $e^{S(\gamma_1)}\alpha^{\gamma_1} =e^{S(\gamma_2)}\alpha^{\gamma_2} =\varrho(\alpha)$, and derive a contradiction. Choose $\lambda_1,\lambda_2 \in [0,1]$ such that $\tilde\gamma_1:=\lambda_1 \gamma_1 + (1-\lambda_1)\gamma_2$ and $\tilde\gamma_2:=\lambda_2 \gamma_1 + (1-\lambda_2)\gamma_2$ are both rational with $\gamma_1\leq \tilde \gamma_1 <\tilde \gamma_2 \leq \gamma_2$. Applying Proposition~\ref{Sproposition} we deduce
\begin{align*}
S(\tilde\gamma_i)+\tilde\gamma_i\log\alpha &= S(\lambda_i\gamma_1+(1-\lambda_i)\gamma_2)+ (\lambda_i\gamma_1+(1-\lambda_i)\gamma_2)\log\alpha\\
&\geq \lambda_i(S(\gamma_1)+\gamma_1\log\alpha) +(1-\lambda_i)(S(\gamma_2)+\gamma_2\log\alpha) = \log\varrho(\alpha),
\end{align*}
and hence $e^{S(\tilde\gamma_i)}\alpha^{\tilde\gamma_i} \geq \varrho(\alpha)$, for $i=1,2$. Applying Lemma~\ref{techpf} it follows that $e^{S(\tilde\gamma_1)}\alpha^{\tilde\gamma_1} =e^{S(\tilde\gamma_2)}\alpha^{\tilde\gamma_2}=\varrho(\alpha)$. Let $x \in X_{\tilde\gamma_1}$ and $y \in X_{\tilde\gamma_2}$, and let $u:=\pi_{q_1}(x)$ and $v:=\pi_{q_2}(y)$. By Proposition~\ref{Sproposition} we have $\varrho(\alpha)=\rho(\mathcal{A}^{\alpha}(u))^{1/|u|} = \rho(\mathcal{A}^{\alpha}(v))^{1/|v|}$, and since $\varsigma(u)=\tilde\gamma_1<\tilde\gamma_2=\varsigma(v)$ this contradicts Lemma~\ref{varsi}. The claim is proved.

Let us define $\mathfrak{r}(\alpha):=\gamma_\alpha$ for all $\alpha \in (0,1]$, and $\mathfrak{r}(0):=0$. Note that $\varrho(0)=\rho(A_0)=1=e^{S(0)}$ as a consequence of Lemma~\ref{bowf} and Proposition \ref{Sproposition}. It follows from this and the previous arguments that for all $\alpha,\gamma \in [0,1]$ we have $\varrho(\alpha) \geq e^{S(\gamma)}\alpha^\gamma$ with equality if and only if $\gamma=\mathfrak{r}(\alpha)$, and for all $\alpha \in (0,1]$ we have $X_\gamma \cap Z_\alpha \neq \emptyset$ precisely when $\gamma = \mathfrak{r}(\alpha)$, in which case $X_{\mathfrak{r}(\alpha)}\subseteq Z_\alpha$.

\medskip\noindent \textbf{2. Monotonicity of $\mathfrak r$.}
We now show that the function $\mathfrak{r}$ thus defined is non-decreasing.  Let us suppose that $\alpha_1,\alpha_2 \in [0,1]$ with $\mathfrak{r}(\alpha_1)<\mathfrak{r}(\alpha_2)$; this implies in particular that $\alpha_2$ is nonzero. By the preceding result we have $\varrho(\alpha_1)= e^{S(\mathfrak{r}(\alpha_1))}\alpha_1^{\mathfrak{r}(\alpha_1)}> e^{S(\mathfrak{r}(\alpha_2))}\alpha_1^{\mathfrak{r}(\alpha_2)}$ and similarly $\varrho(\alpha_2)= e^{S(\mathfrak{r}(\alpha_2))}\alpha_2^{\mathfrak{r}(\alpha_2)}> e^{S(\mathfrak{r}(\alpha_1))}\alpha_2^{\mathfrak{r}(\alpha_1)}$. Consequently $\alpha_1^{\mathfrak{r}(\alpha_2)-\mathfrak{r}(\alpha_1)} <e^{S(\mathfrak{r}(\alpha_1))-S(\mathfrak{r}(\alpha_2))} < \alpha_2^{\mathfrak{r}(\alpha_2)-\mathfrak{r}(\alpha_1)}$, and since $\mathfrak{r}(\alpha_2)-\mathfrak{r}(\alpha_1)>0$ we deduce that $\alpha_1<\alpha_2$. We conclude that if $0\leq \alpha_1<\alpha_2\leq 1$ then necessarily $\mathfrak{r}(\alpha_1)\leq \mathfrak{r}(\alpha_2)$ and therefore $\mathfrak{r}$ is non-decreasing as required.

\medskip\noindent \textbf{3. Continuity of $\mathfrak r$.}
We may now show that $\mathfrak{r}$ is continuous. Given $\alpha_0 \in (0,1]$ let $\mathfrak{r}_-$ be the limit of $\mathfrak{r}(\alpha)$ as $\alpha \to \alpha_0$ from the left, which exists since $\mathfrak{r}$ is monotone. For every $\alpha \in (0,1]$ we have $\varrho(\alpha)= e^{S(\mathfrak{r}(\alpha))}\alpha^{\mathfrak{r}(\alpha)}$. By Lemma~\ref{rhocts} and Proposition \ref{Sproposition}, $\varrho$ and $S$ are continuous, so taking the left limit at $\alpha_0$ yields $e^{S(\mathfrak{r}(\alpha_0))}\alpha_0^{\mathfrak{r}(\alpha_0)}= \varrho(\alpha_0)=e^{S(\mathfrak{r}_-)}\alpha_0^{\mathfrak{r}_-}$. Since $\mathfrak{r}(\alpha)$ is the unique value for which this equality may hold we deduce that $\mathfrak{r}(\alpha_0)= \mathfrak{r}_-$ as required. Similarly for every $\alpha_0 \in [0,1)$ the limit of $\mathfrak{r}(\alpha)$ as $\alpha \to\alpha_0$ from the right is equal to $\mathfrak{r}(\alpha_0)$, and we conclude that $\mathfrak{r}$ is continuous. Since $\mathfrak{r}(0)=0$ and $\mathfrak{r}(1)=1/2$ as a consequence of Proposition \ref{Sproposition}, and we have shown that $\mathfrak{r}$ is continuous and monotone, we deduce that $\mathfrak{r}$ maps $[0,1]$ surjectively onto $[0,\frac{1}{2}]$ as claimed.

\medskip\noindent \textbf{4. 1-ratio and characterisation of extremal orbits.}
It remains to show that for each $\alpha$ the extremal orbits of $\mathsf{A}_\alpha$ may be characterised in terms of $X_{\mathfrak{r}(\alpha)}$ in the manner described by the Theorem, and that $\mathfrak{r}(\alpha)$ is the unique optimal $1$-ratio of $\mathsf{A}_\alpha$. In the case $\alpha=0$ it is obvious that $x \in \Sigma$ is weakly extremal if and only if it is strongly extremal, if and only if $x =0^\infty \in X_0$, and in this case the proof is then complete. For each $\alpha \in (0,1]$, Lemma~\ref{strongex} shows that every recurrent strongly extremal infinite word belongs to $Z_\alpha$, and therefore belongs to $X_{\mathfrak{r}(\alpha)}$ by Proposition \ref{balanced1} and the uniqueness property of $\mathfrak{r}(\alpha)$.

To show that weakly extremal infinite words accumulate on $X_{\mathfrak{r}(\alpha)}$ in the desired manner we require an additional claim. Given $\alpha \in (0,1]$, we assert that there is a unique $T$-invariant Borel probability measure whose support is contained in $Z_\alpha$, and that this support is equal to $X_{\mathfrak{r}(\alpha)}$. Indeed, let $\mu_{\mathfrak{r}(\alpha)}$ be the unique $T$-invariant measure with support equal to $X_{\mathfrak{r}(\alpha)}$, the existence of which is given by Theorem~\ref{Xphi}. If $\nu$ is a $T$-invariant Borel probability measure with $\supp \nu \subseteq Z_\alpha$, define $\widetilde X := \{x \in \supp \nu \colon x \text{ is recurrent}\}$. It follows from the Poincar\'e recurrence theorem that $\widetilde X$ is dense in $\supp \nu$ (see e.g. \cite[Prop. 4.1.18]{KH}). By Proposition \ref{balanced1} every element of $\widetilde X$ is balanced, and since $\mathfrak{r}(\alpha)$ is the unique $\gamma \in [0,1]$ for which $X_\gamma \cap Z_\alpha \neq \emptyset$, it follows that $\widetilde X \subseteq X_{\mathfrak{r}(\alpha)}$. We conclude that $\supp \nu \subseteq X_{\mathfrak{r}(\alpha)}$ and therefore $\nu=\mu_{\mathfrak{r}(\alpha)}$ since the restriction of $T$ to $X_{\mathfrak{r}(\alpha)}$ is known to be uniquely ergodic, which proves the claim. By Theorem~\ref{Xphi} we have $\mu_{\mathfrak{r}(\alpha)}(\{x \in \Sigma \colon x_1=1\}) =\mathfrak{r}(\alpha)$, and we may now apply Lemma~ \ref{weakex} to see that if $x \in \Sigma$ is weakly extremal for $\mathsf{A}_\alpha$, then $(1/n) \sum_{k=0}^{n-1} \mathrm{dist}(x,X_{\mathfrak{r}(\alpha)}) \to 0$ and $\varsigma(\pi_n(x)) \to \mathfrak{r}(\alpha)$ as required.

It remains only to show that for each $\alpha \in (0,1]$, every $x \in X_{\mathfrak{r}(\alpha)}$ is strongly extremal in the strict fashion described by \eqref{niceformula}. Given any compact set $K \subseteq (0,1]$, choose an integer $N_K$ large enough that $N_K>\max\{\lceil \mathfrak{r}(\alpha)^{-1}\rceil, \lceil (1-\mathfrak{r}(\alpha))^{-1}\rceil\}$ for every $\alpha \in K$, and let $M_K>1$ be the constant given by Lemma~\ref{extnorm}. Let $\alpha \in K$ and $x \in X_{\mathfrak{r}(\alpha)}$. By Lemma~\ref{nolongwords} we have $0^{N_K}, 1^{N_K} \nprec x$, and since $X \subseteq Z_\alpha$ we have $\|\mathcal{A}^{(\alpha)}(x,n)\|_\alpha=\varrho(\alpha)^n$ for all $n \geq 1$. Applying Lemma~\ref{rho-norm} and Lemma~ \ref{extnorm},
\begin{align*}\frac{\varrho(\alpha)^n}{2M_KN_K^2} &= \frac{1}{2M_KN_K^2}\left\|\mathcal{A}^{(\alpha)}(x,n) \right\|_\alpha \leq \frac{1}{2N_K^2}\Lvvv\mathcal{A}^{(\alpha)}(x,n)\Rvvv \leq \rho\left(\mathcal{A}^{(\alpha)}(x,n)\right)\\& \leq \Lvvv\mathcal{A}^{(\alpha)}(x,n)\Rvvv \leq M_K\left\|\mathcal{A}^{(\alpha)}(x,n)\right\|_\alpha \leq M_K \varrho(\alpha)^n<2M_KN_K^2\varrho(\alpha)^n\end{align*}
so that \eqref{niceformula} holds with $C_K:=2M_K N^2_K$. In particular this shows that for each $\alpha \in (0,1]$, every $x \in X_{\mathfrak{r}(\alpha)}$ is strongly extremal. The proof of the Theorem is complete.

\section{Proof of Theorem~\ref{counter}}
\label{sec8}

Recall from Proposition~\ref{Sproposition} that there exists a continuous concave function $S \colon [0,1] \to \mathbb{R}$ such that for each $\gamma \in [0,1]$,
\[S(\gamma)= \lim_{n \to \infty} \frac{1}{n} \log \vvv\mathcal{A}(x,n)\vvv =\lim_{n \to \infty} \frac{1}{n} \log \rho(\mathcal{A}(x,n))\]
uniformly for $x \in X_\gamma$. We saw in the course of the proof of Theorem~\ref{technical} that the function $\mathfrak{r} \colon [0,1] \to [0,\frac{1}{2}]$ is characterised by the fact that $\varrho(\alpha) \geq e^{S(\gamma)}\alpha^\gamma$ for all $\alpha,\gamma \in [0,1]$ with equality if and only if $\gamma=\mathfrak{r}(\alpha)$. Readers who have skipped the proof of Theorem~\ref{technical} may note that this characterisation can be deduced easily from the definition of $S$ and the statement of Theorem~\ref{technical}.

The proof of Theorem~\ref{counter} operates by exploiting the concavity of $S$ and the above relationship between $S$ and $\mathfrak{r}$ to compute a value $\alpha_* \in [0,1]$ such that $\mathfrak{r}(\alpha_*) \notin \mathbb{Q}$ as the limit of a series of approximations. We begin with a result  from convex analysis.
\begin{lemma}\label{deriv}
For each $\gamma \in \left(0,\frac{1}{2}\right)$, we have $\mathfrak{r}^{-1}(\gamma)=\{\alpha_0\}$ with $\alpha_0 \in (0,1]$ if and only if $S$ is differentiable at $\gamma$ and $S'(\gamma)=-\log \alpha_0$.
\end{lemma}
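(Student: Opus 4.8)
\emph{Proof proposal.} The plan is to translate the statement into the language of convex analysis. Recall from the proof of Theorem~\ref{technical} (or, for the reader who skipped it, from the discussion at the start of this section) that for every $\alpha\in(0,1]$ the function $g_\alpha\colon[0,1]\to\RR$ defined by $g_\alpha(\gamma):=S(\gamma)+\gamma\log\alpha$ is concave (by Proposition~\ref{Sproposition} and $\log\alpha\le 0$) and attains its maximum value $\log\varrho(\alpha)$ at the point $\gamma=\mathfrak{r}(\alpha)$ and \emph{only} there. Consequently, for $\alpha\in(0,1]$ and $\gamma_0\in[0,1]$ we have $\mathfrak{r}(\alpha)=\gamma_0$ if and only if $\gamma_0$ is a maximiser of $g_\alpha$: the ``only if'' is the definition of $\mathfrak{r}(\alpha)$, and the ``if'' holds because the maximiser is unique.

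Now fix $\gamma_0\in\left(0,\frac12\right)$, which is an interior point of $[0,1]$. The first-order optimality criterion for a finite concave function (see \cite[\S23, \S25]{Rock}) says that $\gamma_0$ maximises $g_\alpha$ if and only if $0$ lies in the superdifferential $\partial g_\alpha(\gamma_0)=\partial S(\gamma_0)+\log\alpha$, i.e. if and only if $-\log\alpha\in\partial S(\gamma_0)$, where $\partial S(\gamma_0)=[S'_+(\gamma_0),S'_-(\gamma_0)]$ is the nonempty compact interval of supergradients of $S$ at $\gamma_0$. Since $S$ is non-decreasing on $\left[0,\frac12\right]$ by Proposition~\ref{Sproposition} and $\gamma_0$ is interior to this interval, every supergradient of $S$ at $\gamma_0$ is non-negative, so $\partial S(\gamma_0)\subseteq[0,\infty)$. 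As $t\mapsto e^{-t}$ is a decreasing homeomorphism of $[0,\infty)$ onto $(0,1]$, combining the last three sentences yields
\[
\mathfrak{r}^{-1}(\gamma_0)=\left\{e^{-t}\colon t\in\partial S(\gamma_0)\right\}=\left[e^{-S'_-(\gamma_0)},\,e^{-S'_+(\gamma_0)}\right],
\]
a closed subinterval of $(0,1]$. (Note $0\notin\mathfrak{r}^{-1}(\gamma_0)$ since $\mathfrak{r}(0)=0\neq\gamma_0$, so the requirement $\alpha_0\in(0,1]$ in the statement is automatic.)

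It then remains only to read off the two implications. The interval $\mathfrak{r}^{-1}(\gamma_0)$ degenerates to a single point $\{\alpha_0\}$ if and only if $S'_-(\gamma_0)=S'_+(\gamma_0)$, which for a concave function is exactly the assertion that $S$ is differentiable at $\gamma_0$; and in that case the single point is $\alpha_0=e^{-S'(\gamma_0)}$, equivalently $S'(\gamma_0)=-\log\alpha_0$. This establishes both directions at once. I expect the argument to be essentially routine bookkeeping with the superdifferential of $S$; the one place needing a little care is the inclusion $\partial S(\gamma_0)\subseteq[0,\infty)$, since this is what guarantees that \emph{every} supergradient $t$ of $S$ at $\gamma_0$ is realised as $-\log\alpha$ for some admissible $\alpha\in(0,1]$, so that no part of the level set $\mathfrak{r}^{-1}(\gamma_0)$ is missed.
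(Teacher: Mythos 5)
Your proposal is correct and follows essentially the same route as the paper: both arguments reduce the lemma to the standard convex-analytic fact that differentiability of a concave function at an interior point is equivalent to the supergradient set being a singleton, and both identify membership of $-\log\alpha$ in $\partial S(\gamma_0)$ with the condition $\mathfrak{r}(\alpha)=\gamma_0$ via the optimality characterisation $\varrho(\alpha)\geq e^{S(\gamma)}\alpha^\gamma$. Your version is a slightly more systematic repackaging (computing the full preimage $\mathfrak{r}^{-1}(\gamma_0)=\left[e^{-S'_-(\gamma_0)},e^{-S'_+(\gamma_0)}\right]$ before reading off the conclusion, and passing explicitly through $\partial g_\alpha(\gamma_0)=\partial S(\gamma_0)+\log\alpha$), but the underlying content — including the use of the monotonicity of $S$ on $\left[0,\tfrac12\right]$ to get $\partial S(\gamma_0)\subseteq[0,\infty)$ so that no supergradient is lost under $t\mapsto e^{-t}$ — is identical to the paper's proof.
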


See Figure~\ref{fig:S(gamma)} for a graph of $S(\gamma)$ along with the tangent line of slope~$\alpha_*$.

\begin{figure}[H]
\[ \includegraphics[width=250pt,height=250pt]{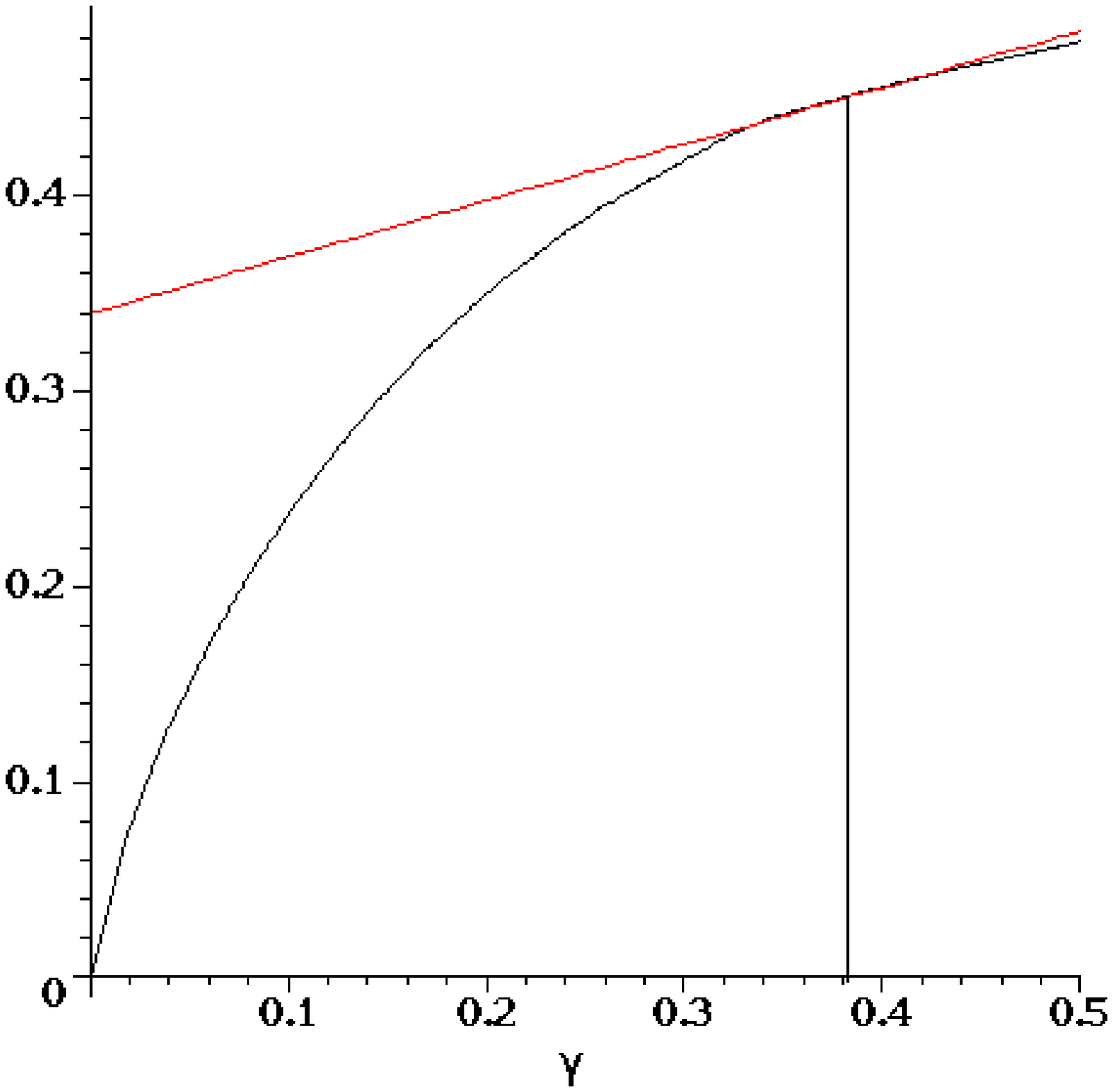} \]
\caption{Graph of $S(\gamma)$, and tangent line at $\gamma \approx 0.3819660\dots$ of slope $-\log(0.74932\dots)$}
\label{fig:S(gamma)}
\end{figure}

\begin{proof}
Recall that if $f \colon [a,b] \to \mathbb{R}$ is a concave function then $\eta \in \mathbb{R}$ is called a \emph{subgradient} of $f$ at $z \in [a,b]$ if $f(y) \leq f(z) + \eta(y-z)$ for all $y \in [a,b]$. Furthermore, $f$ is differentiable at $z \in (a,b)$ with $f'(z)=\eta$ if and only if $\eta$ is the unique subgradient of $f$ at $z$ (see for example \cite[Thm 25.1]{Rock}).
To prove the lemma it therefore suffices to show that for each $\gamma \in (0,\frac{1}{2})$, $\eta\in \mathbb{R}$ is a subgradient of $S$ at $\gamma$ if and only if $e^{-\eta} \in (0,1]$ and $\mathfrak{r}(e^{-\eta})=\gamma$.

Let us prove that this is the case. For every $\alpha,\gamma\in [0,1]$ we have $e^{S(\mathfrak{r}(\alpha))}\alpha^{\mathfrak{r}(\alpha)} \geq e^{S(\gamma)}\alpha^\gamma$ with equality if and only if $\gamma=\mathfrak{r}(\alpha)$. For each fixed $\alpha \in (0,1)$ it now follows by a simple rearrangement that $-\log \alpha$ is a subgradient of $S$ at $\mathfrak{r}(\alpha)$. Conversely, suppose that $\eta \in \mathbb{R}$ is a subgradient of $S$ at some $\gamma_0 \in (0,\frac{1}{2})$. By Proposition~\ref{Sproposition}, $S$ is monotone increasing on the interval $[0,\frac{1}{2}]$ and therefore we must have $\eta \geq 0$. Since $\eta$ is a subgradient we have $e^{S(\gamma_0)-\eta \gamma_0} \geq e^{S( \gamma)-\eta \gamma}$ for all $\gamma \in [0,1]$, and since $e^{-\eta} \in (0,1]$ it follows that $\gamma_0=\mathfrak{r}(e^{-\eta})$ as required.
\end{proof}

The following corollary is not needed in this paper but since it is straightforward, we believe it's worth mentioning.

\begin{corollary}The function $S$ is strictly concave on $[0,1]$ and strictly increasing on $[0,1/2]$.
\end{corollary}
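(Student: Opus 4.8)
The plan is to derive both assertions from Lemma~\ref{deriv} together with the properties of $S$ recorded in Proposition~\ref{Sproposition}: that $S$ is concave, non-decreasing on $[0,\frac12]$, and symmetric under $\gamma \mapsto 1-\gamma$.

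First I would invoke the standard fact that a concave function on an interval is strictly concave if and only if it is not affine on any nondegenerate subinterval; indeed, if a concave function agrees with one of its chords at an interior point of the corresponding interval, then a short convexity argument shows it agrees with that chord on the whole interval. Hence it suffices to prove that $S$ is not affine on any interval $[a,b]$ with $a<b$. Suppose for a contradiction that it is. Since $S(\gamma)=S(1-\gamma)$ for all $\gamma\in[0,1]$, the function $S$ is then also affine on $[1-b,1-a]$; and since the interior of any nondegenerate subinterval of $[0,1]$ must meet either $(0,\frac12)$ or $(\frac12,1)$, after possibly applying the reflection $\gamma\mapsto 1-\gamma$ we may assume that $S$ is affine on some nondegenerate closed interval $[c,d]\subseteq(0,\frac12)$.

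On $(c,d)$ the function $S$ is then differentiable with a constant derivative $\eta$, and since $S$ is non-decreasing on $[0,\frac12]$ we have $\eta\geq 0$, so that $e^{-\eta}\in(0,1]$. For each $\gamma\in(c,d)$, Lemma~\ref{deriv} now applies and yields $\mathfrak{r}^{-1}(\gamma)=\{e^{-\eta}\}$, i.e. $\mathfrak{r}(e^{-\eta})=\gamma$. This would force the single real number $\mathfrak{r}(e^{-\eta})$ to equal every element of the nondegenerate interval $(c,d)$, which is absurd. This contradiction shows that $S$ is strictly concave on $[0,1]$.

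Strict monotonicity on $[0,\frac12]$ then follows immediately: $S$ is non-decreasing there by Proposition~\ref{Sproposition}, and if $S(\gamma_1)=S(\gamma_2)$ held for some $0\leq\gamma_1<\gamma_2\leq\frac12$, then, being non-decreasing, $S$ would be constant -- hence affine -- on $[\gamma_1,\gamma_2]$, contradicting strict concavity. The one point requiring a little care is the reduction to an affine subinterval contained in the open interval $(0,\frac12)$, which is exactly where Lemma~\ref{deriv} is available; the rest is routine bookkeeping with concave functions.
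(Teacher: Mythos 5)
Your proof is correct and follows essentially the same route as the paper's: assume $S$ is affine on some nondegenerate subinterval, conclude via Lemma~\ref{deriv} that $\mathfrak{r}^{-1}$ (equivalently $\mathfrak{r}$) would have to be constant there, contradict the fact that $\mathfrak{r}$ is a well-defined single-valued function, and then deduce strict monotonicity on $[0,\tfrac12]$ from strict concavity together with the non-decreasing property. Your version is actually a little more careful than the paper's terse argument, since you explicitly use the symmetry $S(\gamma)=S(1-\gamma)$ to place the affine subinterval inside $(0,\tfrac12)$ where Lemma~\ref{deriv} applies, and you verify $\eta\ge 0$ so that $e^{-\eta}\in(0,1]$.
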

\begin{proof}If $S$ were not strictly concave, there would be an interval $(\gamma_1,\gamma_2)$ such that $S$ would be linear on this interval. Hence $S'$ would be constant on $(\gamma_1,\gamma_2)$ which would mean, in view of the previous lemma, that $\mathfrak r^{-1}(\gamma)$ would be constant for all $\gamma\in (\gamma_1,\gamma_2)$. This contradicts $\mathfrak r$ being well defined (Theorem~\ref{technical}), whence $S$ is strictly concave.

Since $S$ is non-decreasing, continuous and strictly concave on $[0,1/2]$, it is strictly increasing.
\end{proof}

Throughout this section we let $\phi:=\frac{1+\sqrt{5}}{2}$ denote the golden ratio. Recall that a real number $\gamma$ is said to be \emph{Liouville} if for every $k>0$ there exist integers $p,q$ such that $0<|\gamma-p/q|< 1/q^k$. A classical theorem of Liouville asserts that no algebraic number can be Liouville (see, e.g., \cite[Theorem~191]{HW}). In particular $\phi^{-2}$ is not Liouville.
\begin{lemma}\label{FIM}
Let $\gamma \in [0,\frac{1}{2}]$ and suppose that $\gamma$ is an irrational number which is not Liouville. Then there exists a unique $\alpha \in [0,1]$ such that $\mathfrak{r}(\alpha)=\gamma$.
\end{lemma}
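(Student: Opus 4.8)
The plan is as follows. Existence of at least one $\alpha$ is immediate: since $\gamma$ is irrational it lies in $(0,\tfrac12)$, and by Theorem~\ref{technical} the map $\mathfrak r\colon[0,1]\to[0,\tfrac12]$ is a continuous surjection. For uniqueness, note that $\mathfrak r^{-1}(\gamma)\subseteq(0,1]$ because $\gamma>0=\mathfrak r(0)$, so by Lemma~\ref{deriv} the fibre $\mathfrak r^{-1}(\gamma)$ is a single point if and only if $S$ is differentiable at $\gamma$. I shall therefore assume $S$ is \emph{not} differentiable at $\gamma$, write $\eta_1:=S'_-(\gamma)>S'_+(\gamma)=:\eta_2$ and $\delta:=\eta_1-\eta_2>0$, and deduce that $\gamma$ must then be a Liouville number, contrary to hypothesis.

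First I fix the combinatorial data. Let $(p_n/q_n)$ be the convergents of $\gamma$, with partial quotients $a_n$ and $d_n:=|q_n\gamma-p_n|$, and let $(w_n)$ be the associated standard (Sturmian) words, so that $|w_n|=q_n$, $|w_n|_1=p_n$, and $w_{n+1}$ is a cyclic permutation of $w_n^{a_{n+1}}w_{n-1}$; then $\mathcal A(w_{n+1})$ is conjugate to $\mathcal A(w_{n-1})\mathcal A(w_n)^{a_{n+1}}$ (Lemma~\ref{standard}), and by Lemma~\ref{Sestimates}(i) we have $T_n:=\log\rho(\mathcal A(w_n))=q_nS(p_n/q_n)$. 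Since for large $n$ no $w_n$ contains a constant block of length $N$, Lemma~\ref{rho-norm} and submultiplicativity of $\vvv\cdot\vvv$ give the three-term relation $T_{n+1}=a_{n+1}T_n+T_{n-1}+e_{n+1}$ with $\sup_n|e_n|<\infty$. Writing $P_n$ for the rank-one spectral projection of $\mathcal A(w_n)$ onto its dominant eigenline and $\lambda_n:=\rho(\mathcal A(w_n))$, a direct computation gives $e_{n+1}=\log\tr(P_{n-1}P_n)+O(\lambda_{n-1}^{-2})$. The cocycle generated by $A_0,A_1$ is uniformly hyperbolic along balanced words — over blocks of bounded length it is built from strictly positive matrices which uniformly contract the positive cone in the Hilbert metric — so the projections $P_n$ converge, at a rate exponential in $q_n$, to the rank-one idempotent $P_\infty$ onto the unstable Oseledets direction along the stable one; since $\tr(P_\infty^2)=\tr P_\infty=1$ this yields $|e_{n+1}|=O(e^{-cq_{n-1}})$ for some $c>0$.

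Now I bring in the corner. Put $\bar\eta:=\tfrac12(\eta_1+\eta_2)$, $A:=e^{S(\gamma)-\bar\eta\gamma}$, $\beta:=e^{\bar\eta}$, and $G_n:=T_n-q_n\log A-p_n\log\beta=\log\rho(\mathcal A(w_n))-\log\bigl(A^{q_n}\beta^{p_n}\bigr)$. Since $(q_n)$ and $(p_n)$ satisfy the same three-term recursion as $(T_n)$, we get $G_{n+1}=a_{n+1}G_n+G_{n-1}+e_{n+1}$. On the other hand $G_n=q_n\bigl(S(p_n/q_n)-S(\gamma)\bigr)+\bar\eta(q_n\gamma-p_n)$, so, using that even convergents approach $\gamma$ from below and odd ones from above together with the concavity of $S$ (whence $\tfrac{S(\gamma)-S(r)}{\gamma-r}\to\eta_1$ as $r\uparrow\gamma$ and $\tfrac{S(r)-S(\gamma)}{r-\gamma}\to\eta_2$ as $r\downarrow\gamma$), one finds $G_n=-\tfrac{\delta}{2}d_n+o(d_n)$. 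Substituting into the recursion for $G_n$ and using the elementary identity $d_{n+1}+a_{n+1}d_n+d_{n-1}=2d_{n-1}$ (which follows from $d_{n-1}=a_{n+1}d_n+d_{n+1}$, itself a consequence of the sign alternation of $q_k\gamma-p_k$), we obtain $e_{n+1}=-\delta d_{n-1}+o(d_{n-1})$, so that $\delta d_{n-1}\sim|e_{n+1}|$. Combining this with the exponential bound $|e_{n+1}|=O(e^{-cq_{n-1}})$ of the previous paragraph and with $d_{n-1}>(q_{n-1}+q_n)^{-1}$ forces $q_n\ge e^{cq_{n-1}/2}$ for all large $n$; hence $\bigl|\gamma-p_{n-1}/q_{n-1}\bigr|<(q_{n-1}q_n)^{-1}<q_{n-1}^{-k}$ eventually, for every $k$, so $\gamma$ is Liouville. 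This contradiction proves that $S$ is differentiable at $\gamma$, and therefore $\mathfrak r^{-1}(\gamma)$ is a singleton.

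The step I expect to be the main obstacle is the estimate $|e_{n+1}|=O(e^{-cq_{n-1}})$: it is easy to see via Lemma~\ref{rho-norm} that the $e_n$ are bounded, but the argument depends essentially on their decaying exponentially fast in the continued-fraction denominators, and this is precisely where one must use the uniform hyperbolicity of the balanced-word cocycle and the resulting exponential convergence of the eigenlines of $\mathcal A(w_n)$. The remainder — the continued-fraction identities, the consequences of the concavity of $S$, and the estimates for matrix products along balanced words — is bookkeeping built on results already established.
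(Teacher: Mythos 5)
Your route is genuinely different from the paper's. The paper assumes $\mathfrak r^{-1}(\gamma)\supseteq[e^{-\varepsilon}\alpha_0,e^{\varepsilon}\alpha_0]$ and invokes an external quantitative result (Morris, \cite[Thm~1.2]{QBWF}): for every $r>0$ the best periodic product of length $\le n$ approximates $\varrho(\alpha_0)$ to within $O(n^{-r})$. One then picks a near-optimal word of length $m\le n$, observes that its $1$-ratio $p/q$ satisfies $|p/q-\gamma|>n^{-k}$ by the non-Liouville hypothesis, and checks that replacing $\alpha_0$ by $e^{\pm\varepsilon}\alpha_0$ would make that word's spectral radius overshoot $\varrho(e^{\pm\varepsilon}\alpha_0)$, a contradiction. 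Your argument instead passes through Lemma~\ref{deriv} to reduce uniqueness to differentiability of $S$ at $\gamma$, and then extracts the corner $\delta=\eta_1-\eta_2$ from the three-term recursion $T_{n+1}=a_{n+1}T_n+T_{n-1}+e_{n+1}$ along the convergents. Your reduction of the corner to a lower bound $|e_{n+1}|\gtrsim\tfrac{\delta}{2}d_{n-1}$ via the continued-fraction identity $d_{n-1}=a_{n+1}d_n+d_{n+1}$ and the one-sided-derivative asymptotics $G_n\sim-\tfrac{\delta}{2}d_n$ is correct (I have checked the algebra, including the lower bound $d_{n-1}-d_{n+1}>d_{n-1}/2$). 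This buys a proof that does not rely on \cite{QBWF}, at the cost of having to establish the exponential-decay estimate on $e_n$ from scratch. The paper's approach is shorter given the reference; yours is more self-contained and, if completed, shows directly the quantitative sharpness of the corner in $S$.

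The one genuine gap is the one you flag yourself: the estimate $|e_{n+1}|=O(e^{-cq_{n-1}})$. Two points deserve care there. First, the formula $e_{n+1}=\log\tr(P_{n-1}P_n)+O(\lambda_{n-1}^{-2})$ presupposes that $\mathcal{A}(w_{n+1})$, $\mathcal{A}(w_n)$ and $\mathcal{A}(w_{n-1})$ are taken in \emph{compatible} conjugacy representatives; the identity $w_{n+1}=w_n^{a_{n+1}}w_{n-1}$ holds for the standard words only up to cyclic rotation, so you must fix a consistent family of representatives (nested prefixes of a single characteristic word, say) before $P_{n-1}$ and $P_n$ can be compared. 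Second, after fixing representatives, the convergence of the eigenprojections at rate exponential in $q_n$ does follow from Birkhoff cone contraction along the Sturmian word (here Lemma~\ref{nolongwords} guarantees strictly positive blocks of bounded length), but you should note that the left and right eigendirections of $\mathcal{A}(w_n)$ are pinned down by the \emph{tail} and \emph{head} of the product respectively, so their convergence is governed by $q_{n-1}$ rather than $q_n$ depending on how the recursion is oriented; the weaker rate $e^{-cq_{n-1}}$ is still more than enough for the Liouville conclusion. With these points filled in, the proof is complete.
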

\begin{proof}
By Theorem~\ref{technical} the function $\mathfrak{r}$ is surjective and monotone, so the set $\mathfrak{r}^{-1}(\gamma)$ is either a point or an interval. To show that this set cannot be an interval, we shall suppose that there exist $\alpha_0 \in (0,1)$ and $\varepsilon>0$ such that $\mathfrak{r}(\alpha)=\gamma$ for all $\alpha \in [e^{-\varepsilon}\alpha_0, e^{\varepsilon}\alpha_0]$, and derive a contradiction.

Since $\gamma$ is irrational but not Liouville, we may choose an integer $k>0$ such that for all integers $p,q$ with $q$ nonzero we have $\left|\gamma - p/q\right| > 1/q^k$. A theorem due to the second named author \cite[Thm 1.2]{QBWF} implies that for every $r>0$,
\[\max\left\{\rho\left(\mathcal{A}^{(\alpha_0)}(x,m)\right)^{\frac{1}{m}} \colon x \in \Sigma \text{ and }1 \leq m \leq n\right\} = \varrho(\alpha_0)+O\left(\frac{1}{n^r}\right)\]
in the limit as $n \to \infty$. In particular it follows that if $n$ is some sufficiently large integer, then there exist an integer $m$ and an infinite word $x \in \Sigma$ such that $1 \leq m \leq n$ and
\begin{equation}\label{rpoint2}\rho\left(\mathcal{A}^{(\alpha_0)}(x,m)\right)^{1/m} >\left(1-\frac{1}{n^{k+1}}\right)\varrho(\alpha)>e^{-\varepsilon n^{-k}}\varrho(\alpha).\end{equation}
Let $\varsigma(\pi_m(x))=p/q$ in least terms; we shall suppose firstly that $\frac{p}{q}-\gamma>0$, the opposite case being similar. By hypothesis we have $\varrho(\lambda\alpha_0)= e^{S(\mathfrak{r}(\lambda\alpha_0))}(\lambda\alpha_0)^{\mathfrak{r}(\alpha_0)} = e^{S(\gamma)}(\lambda\alpha_0)^\gamma=\lambda^{\gamma}\varrho(\alpha_0)$ for every $\lambda \in [e^{-\varepsilon},e^{\varepsilon}]$, and also $\frac{p}{q}-\gamma = |\frac{p}{q}-\gamma| > q^{-k} \geq n^{-k}$. Combining this with \eqref{rpoint2} and Lemma~ \ref{bowf} we obtain
\begin{align*}\varrho\left(e^{\varepsilon}\alpha_0\right) \geq \rho\left(\mathcal{A}^{\left(e^{\varepsilon}\alpha_0\right)}(x,m)\right)^{1/m} &=e^{\varepsilon p/q} \rho\left(\mathcal{A}^{(\alpha_0)}(x,m)\right)^{1/m}\\&> e^{\varepsilon p/q-\varepsilon n^{-k}}\varrho(\alpha_0) = e^{\varepsilon\left(p/q - \gamma- n^{-k}\right)}\varrho\left(e^{\varepsilon}\alpha_0\right)\\&> \varrho\left(e^{\varepsilon}\alpha_0\right),\end{align*}
a contradiction. In the case $\frac{p}{q}-\gamma<0$ we may similarly arrive at the expression
\[\varrho\left(e^{-\varepsilon}\alpha_0\right) >e^{-\varepsilon p/q - \varepsilon n^{-k}}\varrho(\alpha_0)= e^{\varepsilon\left(\gamma-p/q-n^{-k}\right)} \varrho\left(e^{-\varepsilon}\alpha_0\right) >\varrho\left(e^{-\varepsilon}\alpha_0\right)\]
which is also a contradiction. The proof is complete.
\end{proof}

Let $(F_n)_{n=0}^\infty$ denote the Fibonacci sequence, which is defined by $F_0:=0$, $F_1:=1$ together with the recurrence relation $F_{n+2}:=F_{n+1}+F_n$, and recall that $F_n = (\phi^n - (-1/\phi)^n)/\sqrt{5}$ for every $n \geq 0$. Define a sequence of integers $(\tau_n)_{n=0}^\infty$ by $\tau_0:=1$, $\tau_1=\tau_2:=2$, and $\tau_{n+1}:=\tau_{n}\tau_{n-1}-\tau_{n-2}$ for every $n \geq 2$. Finally, define a sequence of matrices $(B_n)_{n=1}^\infty$ by $B_1:=A_1$, $B_2:=A_0$ and $B_{n+1}:=B_{n}B_{n-1}$ for every $n \geq 2$. The key properties of $F_n$, $B_n$ and $\tau_n$ are summarised in the following three lemmas.
\begin{lemma}\label{bn-matrices}
For each $n \geq 2$ the identities $S(F_{n-2}/F_n)=F_n^{-1}\log \rho(B_n)$ and $F_{n}F_{n-1}-F_{n+1}F_{n-2}=(-1)^n$ hold, and the value $\phi^{-2}$ lies strictly between $F_{n-2}/F_{n}$ and $F_{n-1}/F_{n+1}$.
\end{lemma}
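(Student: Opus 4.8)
The plan is to prove the three assertions in turn: the two Fibonacci identities are elementary, while the value of $S$ at $F_{n-2}/F_n$ is obtained by combining the structural results of Section~\ref{section6} with the combinatorics of standard words. For the identity $F_nF_{n-1}-F_{n+1}F_{n-2}=(-1)^n$, I would start from Cassini's identity $F_{k+1}F_{k-1}-F_k^2=(-1)^k$, valid for all $k\ge1$ (for instance as the determinant of $\left(\begin{smallmatrix}1&1\\1&0\end{smallmatrix}\right)^k$). Substituting $F_{n+1}=F_n+F_{n-1}$ and $F_{n-2}=F_n-F_{n-1}$ gives $F_{n+1}F_{n-2}=F_n^2-F_{n-1}^2$, so that $F_nF_{n-1}-F_{n+1}F_{n-2}=F_{n-1}^2-F_nF_{n-2}=-(F_nF_{n-2}-F_{n-1}^2)=-(-1)^{n-1}=(-1)^n$, where in the last step Cassini's identity is used again with index $n-1$. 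This is valid for $n\ge2$.

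For the betweenness claim, the identity just proved gives $\frac{F_{n-2}}{F_n}-\frac{F_{n-1}}{F_{n+1}}=\frac{F_{n-2}F_{n+1}-F_{n-1}F_n}{F_nF_{n+1}}=\frac{(-1)^{n+1}}{F_nF_{n+1}}$. Since $F_m=(\phi^m-(-\phi)^{-m})/\sqrt5$, both $F_{n-2}/F_n$ and $F_{n-1}/F_{n+1}$ tend to $\phi^{-2}$; and this displayed identity, together with the fact (easily checked from it) that $(F_{2k-2}/F_{2k})_k$ is strictly increasing and $(F_{2k-1}/F_{2k+1})_k$ is strictly decreasing, shows that $F_{m-2}/F_m<\phi^{-2}$ when $m$ is even and $F_{m-2}/F_m>\phi^{-2}$ when $m$ is odd. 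Hence exactly one of $F_{n-2}/F_n$, $F_{n-1}/F_{n+1}$ lies below $\phi^{-2}$ and the other above it, and since $\phi^{-2}$ is irrational the value $\phi^{-2}$ lies strictly between them. (Equivalently, $F_{n-2}/F_n$ and $F_{n-1}/F_{n+1}$ are consecutive convergents of the continued fraction $\phi^{-2}=[0;2,1,1,1,\dots]$, and consecutive convergents of an irrational number always straddle it.)

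For the equality $S(F_{n-2}/F_n)=F_n^{-1}\log\rho(B_n)$, recall from the Introduction the finite Fibonacci words $u_{(1)}=1$, $u_{(2)}=0$, $u_{(n+1)}=u_{(n)}u_{(n-1)}$, for which $B_n=h(u_{(n)})$. Using the identities $\mathcal{A}(ab)=\mathcal{A}(b)\mathcal{A}(a)$ and $\widetilde{ab}=\tilde b\tilde a$ one checks by induction that $B_n=\mathcal{A}(\widetilde{u_{(n)}})$, and a second easy induction gives $|u_{(n)}|=F_n$ and $|u_{(n)}|_1=F_{n-2}$ for $n\ge2$. Put $v_n:=\widetilde{u_{(n)}}$, so that $v_1=1$, $v_2=0$ and $v_{n+1}=v_{n-1}v_n$. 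I claim each $v_n$ with $n\ge2$ is a standard word: indeed $(v_2,v_3)=(0,10)\in\mathcal{W}$, and if $(v_{2k},v_{2k+1})\in\mathcal{W}$ then the closure rule $(u,v)\mapsto(uv,v)$ yields $(v_{2k+2},v_{2k+1})\in\mathcal{W}$ and then $(u,v)\mapsto(u,vu)$ yields $(v_{2k+2},v_{2k+3})\in\mathcal{W}$, so by induction $v_n$ occurs as a component of a pair in $\mathcal{W}$ for every $n\ge2$. By Lemma~\ref{standard}(i), $v_n^\infty\in X_{F_{n-2}/F_n}$, and since $\pi_{F_n}(v_n^\infty)=v_n$, Lemma~\ref{Sestimates}(i) gives $S(F_{n-2}/F_n)=F_n^{-1}\log\rho(\mathcal{A}(v_n))=F_n^{-1}\log\rho(B_n)$, as required.

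I expect the main obstacle to be the bookkeeping in the last part: one must keep straight that it is $\widetilde{u_{(n)}}$, not $u_{(n)}$ itself, that is standard in the sense of $\mathcal{W}$, verify that the order-reversing convention in the definition of $\mathcal{A}$ really makes $\mathcal{A}(\widetilde{u_{(n)}})$ equal to $B_n$, and confirm the length and $1$-count recursions so that Lemma~\ref{standard}(i) and Lemma~\ref{Sestimates}(i) are applied with the correct parameters. As an alternative to the standard-word argument one could instead observe that $u_{(n)}^2$ is a subword of $u_{(n+3)}$, and hence of the balanced infinite Fibonacci word, so $u_{(n)}^2$ is balanced; by Lemma~\ref{cyclic-balanced} the word $u_{(n)}^\infty$ is then balanced, hence lies in $X_{F_{n-2}/F_n}$, and one finishes by noting that $\rho(\mathcal{A}(u_{(n)}))=\rho(B_n)$, since by Lemma~\ref{beeteevee} the matrices $\mathcal{A}(u_{(n)})$ and $\mathcal{A}(\widetilde{u_{(n)}})=B_n$ have equal trace and both have determinant $1$, hence the same characteristic polynomial.
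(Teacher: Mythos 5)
Your proof is correct, and it in fact patches a small but genuine slip in the paper's own argument. The paper's proof defines $u_{(1)}=1$, $u_{(2)}=0$, $u_{(n+1)}=u_{(n)}u_{(n-1)}$, and then asserts ``Clearly we have $\mathcal{A}(u_{(n)})=B_n$'' and that each $u_{(n)}$ is a standard word. Both claims fail literally: since $\mathcal{A}(u)=A_{u_{|u|}}\cdots A_{u_1}$ reverses the order of multiplication, one has $\mathcal{A}(u_{(3)})=A_1A_0$ while $B_3=A_0A_1$, and more generally $\mathcal{A}(u_{(n)})=B_n$ only holds with the word reversed, $B_n=\mathcal{A}(\widetilde{u_{(n)}})$; correspondingly $u_{(5)}=01001$ does not arise as a component of any pair in the set $\mathcal{W}$ of Definition~4.3 (all first components have the form $0\ldots$ and second components $1\ldots$, and none of length 5 equals $01001$), whereas $\widetilde{u_{(5)}}=10010$ does via $(010,10010)\in\mathcal{W}$. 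You identify exactly this issue, prove $B_n=\mathcal{A}(\widetilde{u_{(n)}})$ by induction using $\mathcal{A}(ab)=\mathcal{A}(b)\mathcal{A}(a)$ and $\widetilde{ab}=\tilde b\,\tilde a$, and run the induction through $\mathcal{W}$ on the reversed words $v_n=\widetilde{u_{(n)}}$, all of which checks out. The conclusion $S(F_{n-2}/F_n)=F_n^{-1}\log\rho(B_n)$ is of course unaffected by the slip, since reversal preserves trace and determinant and hence the spectral radius, which is precisely the content of your alternative route via Lemma~\ref{beeteevee} and Lemma~\ref{cyclic-balanced}. Your Cassini argument for $F_nF_{n-1}-F_{n+1}F_{n-2}=(-1)^n$ and the sign computation giving the betweenness of $\phi^{-2}$ are both elementary and correct, and are equivalent to the paper's one-line appeal to the continued-fraction convergents of $\phi^{-2}$.
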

\begin{proof}
Define a sequence of finite words by $u_{(1)}:=1$, $u_{(2)}:=0$, and $u_{(n+1)}:=u_{(n)}u_{(n-1)}$ for every $n \geq 2$. Clearly we have $\mathcal{A}(u_{(n)})=B_n$ for all $n \geq 1$.
 A simple induction argument shows that each $u_{(n)}$ is a standard word in the sense defined in Lemma~\ref{standard}, and that $|u_{(n)}|=F_n$, $|u_{(n)}|_1=F_{n-2}$ for every $n \geq 2$.
 By Lemma~\ref{standard} and Lemma~\ref{Sestimates}(i) we therefore have $[u_{(n)}]^\infty \in X_{F_{n-2}/F_n}$ and consequently $S(F_{n-2}/F_{n}) = F_n^{-1}\log \rho(\mathcal{A}(u_{(n)}))=F_n^{-1}\log \rho(B_n)$ for every $n \geq 2$ as required. The remaining parts of the lemma follow from the fact that the fractions $F_{n-2}/F_n$ are precisely the continued fraction convergents of $\phi^{-2}$. Alternatively these results can be derived from the explicit formula for $(F_n)$.
\end{proof}
\begin{lemma}\label{taun-sequence-1}
For each $n \geq 1$ we have $\tr B_n = \tau_n$.
\end{lemma}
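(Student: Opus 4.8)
The plan is to exploit the fact that every $B_n$ lies in $SL_2(\mathbb{Z})$, so that $\det B_n = 1$, together with the classical Fricke trace identity for $2\times2$ matrices. Indeed, Cayley--Hamilton gives $N + N^{-1} = (\tr N)\,I$ for any $N$ with $\det N = 1$; multiplying by an arbitrary matrix $M$ and taking traces yields
\[
\tr(MN) + \tr(MN^{-1}) = \tr(M)\tr(N).
\]
I would apply this with $M = B_n$ and $N = B_{n-1}$, and then use that conjugation preserves the trace.

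First I would extend the sequence of matrices one step backwards, setting $B_0 := A_1^{-1}A_0 = \left(\begin{smallmatrix}1&1\\-1&0\end{smallmatrix}\right)$, so that $B_1B_0 = A_1 A_1^{-1} A_0 = A_0 = B_2$ and hence the recursion $B_{n+1} = B_n B_{n-1}$ now holds for all $n \geq 1$. A direct computation gives $\tr B_0 = 1$, $\tr B_1 = \tr A_1 = 2$ and $\tr B_2 = \tr A_0 = 2$, which match $\tau_0,\tau_1,\tau_2$ respectively.

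Next, for $n \geq 2$ I would compute, using the Fricke identity and then the relation $B_n = B_{n-1}B_{n-2}$ (available for $n\geq 2$ after the backwards extension),
\begin{align*}
\tr B_{n+1} = \tr(B_n B_{n-1}) &= \tr B_n \cdot \tr B_{n-1} - \tr\!\left(B_n B_{n-1}^{-1}\right) \\
&= \tr B_n \cdot \tr B_{n-1} - \tr\!\left(B_{n-1} B_{n-2} B_{n-1}^{-1}\right) = \tr B_n \cdot \tr B_{n-1} - \tr B_{n-2}.
\end{align*}
Thus $(\tr B_n)_{n\geq 0}$ and $(\tau_n)_{n\geq 0}$ satisfy the same three-term recursion $x_{n+1} = x_n x_{n-1} - x_{n-2}$ for all $n \geq 2$ and agree for $n = 0,1,2$, so an immediate induction shows $\tr B_n = \tau_n$ for all $n \geq 0$, in particular for all $n \geq 1$.

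There is no genuine obstacle here beyond bookkeeping: the one point needing care is the index alignment, namely ensuring that the trace recursion is valid in exactly the range $n \geq 2$ in which the defining recursion for $\tau_n$ operates — this is precisely what introducing $B_0$ arranges. (One could equally avoid $B_0$ by checking $\tr B_3 = 3 = \tau_3$ directly and then running the induction from $n \geq 3$.)
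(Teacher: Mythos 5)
Your proof is correct, and it takes a cleaner, more conceptual route than the paper. The paper's argument writes $B_n = \left(\begin{smallmatrix}a_n&b_n\\c_n&d_n\end{smallmatrix}\right)$, expands the defining recursion twice in entries, and then regroups terms using $a_{n-1}d_{n-1}-b_{n-1}c_{n-1}=1$ to extract $\tr B_{n-2}$; it is a direct entry-by-entry computation, with the base cases handled by checking $\tr B_1 = \tr B_2 = 2$ and $\tr B_3 = 3$ and running the induction from $n\geq 3$. You instead package the same use of $\det B_n = 1$ into the Fricke identity $\tr(MN)+\tr(MN^{-1})=\tr M\,\tr N$ on $SL_2$, and handle the resulting term $\tr(B_nB_{n-1}^{-1})$ by conjugation-invariance of trace after substituting $B_n = B_{n-1}B_{n-2}$. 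This avoids the lengthy symbolic expansion and makes visible exactly which structure drives the three-term trace recursion (the determinant-one condition plus the Fibonacci-type concatenation). Your $B_0$ extension is a nice bookkeeping device, but, as you note yourself, one can equally well dispense with it by checking $\tr B_3 = 3 = \tau_3$ directly, which is precisely what the paper does.
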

\begin{proof}
By direct evaluation the reader may obtain $\tr B_1 = \tr B_2 =2=\tau_1 = \tau_2$ and $\tr B_3 =3= \tau_3$, so it suffices to show that the sequence $(\tr B_n)$ satisfies the same recurrence relation as $(\tau_n)$ for all $n \geq 3$. Let us write
\[B_n = \left(\begin{array}{cc}a_n&b_n\\c_n&d_n\end{array}\right)\]
for each $n\geq 1$. Notice that we have $a_nd_n-b_nc_n = \det B_n = 1$  for every $n$, and for each $n \geq 2$ the definition $B_{n+1}:=B_nB_{n-1}$ implies the identity
\[\left(\begin{array}{cc}a_{n+1}&b_{n+1}\\ c_{n+1}&d_{n+1}\end{array}\right) = \left(\begin{array}{cc}a_na_{n-1}+b_nc_{n-1}& a_nb_{n-1}+b_nd_{n-1}\\c_na_{n-1}+d_nc_{n-1}& c_nb_{n-1}+d_nd_{n-1}\end{array}\right).\]
Fix any $n \geq 3$. By definition we have
\[\tr B_{n+1} = a_{n+1}+d_{n+1} = a_n a_{n-1} + b_nc_{n-1} + c_n b_{n-1} + d_n d_{n-1}\]
and
\[(\tr B_n)(\tr B_{n-1})= a_n a_{n-1}  + a_n d_{n-1} + d_na_{n-1}+d_n d_{n-1},\]
so we may compute
\begin{align*}(\tr B_n)(\tr B_{n-1})-\tr B_{n+1} &= a_n d_{n-1} +d_n a_{n-1}- b_n c_{n-1} - c_n b_{n-1}\\
&= d_{n-1}(a_{n-1}a_{n-2} + b_{n-1}c_{n-2}) + a_{n-1}(c_{n-1}b_{n-2}+d_{n-1}d_{n-2})\\
&\quad- c_{n-1}(a_{n-1}b_{n-2} + b_{n-1}d_{n-2}) - b_{n-1}(c_{n-1}a_{n-2}+d_{n-1}c_{n-2})\\
&= a_{n-2}(a_{n-1}d_{n-1}-b_{n-1}c_{n-1}) + d_{n-2}( a_{n-1}d_{n-1}-b_{n-1}c_{n-1} )\\
& = a_{n-2}+d_{n-2} = \tr B_{n-2},\
\end{align*}
which establishes the required recurrence relation.
\end{proof}

\begin{lemma}\label{taun-sequence-2}
There exist constants $\delta_1,\delta_2>0$ such that
\[\left|\log \tau_n - \log \rho(B_n)\right| =  O\left(e^{-\delta_1F_n}\right)\]
and
\[\left|\log \left(1-\frac{\tau_{n-1}}{\tau_{n+1}\tau_{n}}\right)\right| = O\left(e^{-\delta_2F_n}\right)\]
in the limit as $n \to \infty$.
\end{lemma}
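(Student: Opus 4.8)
The plan is to deduce both estimates from the observation that $\tau_n=\tr B_n$ approximates $\rho(B_n)$ with an error that is $O(\tau_n^{-2})$, together with a lower bound showing that $\tau_n$ itself grows at least like $e^{cF_n}$ for some $c>0$. First I would record the elementary facts about $B_n$: each $B_n$ is a product of copies of $A_0$ and $A_1$, hence a non-negative integer matrix with $\det B_n=1$, and by Lemma~\ref{taun-sequence-1} its trace is $\tau_n$. It is straightforward to check that $(\tau_n)$ is non-decreasing with $\tau_n\geq 3$ for $n\geq 3$, so for $n\geq 3$ the matrix $B_n$ has real eigenvalues $\lambda_n>1>\lambda_n^{-1}>0$ with $\rho(B_n)=\lambda_n$, $\lambda_n+\lambda_n^{-1}=\tau_n$, and $\lambda_n\geq\tfrac12\tau_n$. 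Hence $\rho(B_n)\leq\tau_n$ and
\[0<\log\tau_n-\log\rho(B_n)=\log\!\left(1+\lambda_n^{-2}\right)\leq\lambda_n^{-2}\leq\frac{4}{\tau_n^2}.\]
Thus the first estimate reduces to a lower bound of the form $\tau_n\geq e^{cF_n}$ with $c>0$.

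To obtain this bound I would combine Lemma~\ref{bn-matrices}, which gives $\log\rho(B_n)=F_n\,S(F_{n-2}/F_n)$, with the properties of $S$ from Proposition~\ref{Sproposition}: $S$ is continuous and concave on $[0,1]$ with $S(0)=0$ and $S(\tfrac12)=\log\varrho(1)=\log\phi>0$. Since $\phi^{-2}\in(0,\tfrac12)$, concavity of $S$ on $[0,\tfrac12]$ yields $S(\phi^{-2})\geq 2\phi^{-2}S(\tfrac12)>0$; write $s_*:=S(\phi^{-2})$. The fractions $F_{n-2}/F_n$ are the continued-fraction convergents of $\phi^{-2}$ and hence tend to $\phi^{-2}$, so by continuity $S(F_{n-2}/F_n)\to s_*$ and therefore $S(F_{n-2}/F_n)\geq s_*/2$ for all large $n$. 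Consequently $\log\tau_n\geq\log\rho(B_n)\geq\tfrac12 s_* F_n$, i.e.\ $\tau_n^{-2}\leq e^{-s_*F_n}$, for all large $n$. Feeding this into the displayed inequality gives $|\log\tau_n-\log\rho(B_n)|\leq 4e^{-s_*F_n}$ eventually, which is the first claim with $\delta_1=s_*$.

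For the second estimate I would argue directly from the recurrence. Using $\tau_{n-2}\leq\tau_{n-1}$ and $\tau_n\geq 2$ one gets $\tau_{n+1}=\tau_n\tau_{n-1}-\tau_{n-2}\geq\tau_{n-1}(\tau_n-1)\geq\tfrac12\tau_{n-1}\tau_n$, hence
\[0<\frac{\tau_{n-1}}{\tau_{n+1}\tau_n}\leq\frac{2}{\tau_n^{2}}.\]
For large $n$ the right-hand side is at most $\tfrac12$, so the bound $|\log(1-x)|\leq 2x$ applies and yields $\bigl|\log\bigl(1-\tau_{n-1}/(\tau_{n+1}\tau_n)\bigr)\bigr|\leq 4\tau_n^{-2}\leq 4e^{-s_*F_n}$, which is the second claim with $\delta_2=s_*$. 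The only step with any real content is the comparison between $\rho(B_n)$ and $\tau_n$ — specifically the strict positivity of $s_*=S(\phi^{-2})$, which uses the concavity of $S$ rather than merely its monotonicity on $[0,\tfrac12]$; everything else is routine.
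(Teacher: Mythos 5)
Your argument is correct, and for the first estimate it is essentially the paper's: both exploit $\tau_n=\rho(B_n)+\rho(B_n)^{-1}$ (valid since $\det B_n=1$ and $B_n\geq 0$), the bound $\log(1+x)\leq x$, and the fact that $F_n^{-1}\log\rho(B_n)=S(F_{n-2}/F_n)\to S(\phi^{-2})>0$. You spell out the positivity of $S(\phi^{-2})$ via concavity ($S(\phi^{-2})\geq 2\phi^{-2}S(\tfrac12)$), which the paper leaves implicit; that is a nice clarification, since monotonicity alone would not rule out $S$ vanishing on a neighbourhood of $0$.

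For the second estimate you diverge from the paper in a useful way. The paper computes the exact asymptotic rate, showing $\tfrac{1}{F_n}\log\bigl(\tau_{n-1}/(\tau_{n+1}\tau_n)\bigr)\to S(\phi^{-2})(\phi^{-1}-\phi-1)=-2S(\phi^{-2})$ using the first part together with the Fibonacci ratios $F_{n\pm1}/F_n\to\phi^{\pm1}$, then concludes. You instead bound the quantity directly from the recurrence: using $\tau_{n-2}\leq\tau_{n-1}$ and $\tau_n\geq 2$, you get $\tau_{n+1}\geq\tau_{n-1}(\tau_n-1)\geq\tfrac12\tau_{n-1}\tau_n$, hence $\tau_{n-1}/(\tau_{n+1}\tau_n)\leq 2/\tau_n^2$, and then feed in the lower bound on $\tau_n$ already established. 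Your route is more elementary (no asymptotics for $F_{n\pm1}/F_n$ needed) and gives a slightly cleaner constant, at the price of not identifying the sharp decay rate $-2S(\phi^{-2})$ per unit $F_n$; the paper's version is what is actually reused in the subsequent explicit error estimate for $|\alpha_*-\alpha_N|$, so the exact constant there is not wasted. Both are sound; the conclusion and admissible $\delta_1,\delta_2$ agree.
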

\begin{proof}
It is clear that $F_{n-2}/F_n \to \phi^{-2}$ using the formula for $F_n$, and since $S$ is continuous it follows via Lemma~\ref{bn-matrices} that $F_n^{-1}\log \rho(B_n) \to S(\phi^{-2})>0$. Since $\det B_n=1$ and $B_n$ is non-negative, the eigenvalues of $B_n$ are $\rho(B_n)$ and $\rho(B_n)^{-1}$ respectively, so for each $n \geq 1$ we have $\tau_n = \tr B_n = \rho(B_n) + \rho(B_n)^{-1}$, where we have used Lemma~\ref{taun-sequence-1}. Hence,
\[0 \leq \log \tau_n - \log \rho(B_n) = \log \left(\frac{\rho(B_n)+\rho(B_n)^{-1}}{\rho(B_n)}\right)\leq \frac{1}{\rho(B_n)^2} = O\left(e^{-F_n S(\phi^{-2})}\right),\]
where we have used the elementary inequality $\log (1+x) \leq x$ which holds for all real $x$, and this proves the first part of the lemma.

It follows from this result that $\lim_{n \to \infty} F_n^{-1} \log \tau_n = S(\phi^{-2})$. We may therefore apply this to obtain

\begin{align*}
\lim_{n \to \infty}\frac{1}{F_n}\log \left(\frac{\tau_{n-1}}{\tau_{n+1}\tau_{n}}\right) &= \lim_{n \to \infty}\left(\frac{1}{F_n} \log \tau_{n-1} - \frac{1}{F_n} \log \tau_{n+1}- \frac{1}{F_n} \log \tau_{n}\right)\\&= S(\phi^{-2})(\phi^{-1} - \phi - 1) = -2 S(\phi^{-2})<0,
\end{align*}
from which the second part of the lemma follows easily.
\end{proof}

\noindent \emph{Proof of Theorem~\ref{counter}}.
We will show that $S'(\phi^{-2})=-\log \alpha_*$, where $\alpha_*$ satisfies the product and limit formulas given in the statement of the Theorem. By Lemma~\ref{deriv} this implies that $\mathfrak{r}(\alpha_*)=\phi^{-2} \notin \mathbb{Q}$, and by Theorem \ref{technical} this implies that $\mathsf{A}_{\alpha_*}$ does not satisfy the finiteness property.

By Lemma~\ref{deriv} together with Lemma~\ref{FIM}, the derivative $S'(\phi^{-2})$ exists and is finite. Using Lemma~ \ref{bn-matrices} and Lemma~\ref{taun-sequence-2}, we may now compute
\begin{align*}
S'(\phi^{-2})&=\lim_{n \to \infty}\frac{S\left(\frac{F_{n-1}}
{F_{n+1}}\right)-S\left(\frac{F_{n-2}}
{F_{n}}\right)}{\frac{F_{n-1}}
{F_{n+1}}-\frac{F_{n-2}}{F_{n}}}\\
&=\lim_{n \to \infty}\frac{\frac{1}{F_{n+1}}
\log\rho\left(B_{n+1}\right)-\frac{1}{F_{n}}\log
\rho\left(B_{n}\right)}{\frac{F_{n-1}}
{F_{n+1}}-\frac{F_{n-2}}{F_{n}}}\\
&=\lim_{n \to \infty}\frac{{F_{n}}
\log\rho\left(B_{n+1}\right)-{F_{n+1}}\log
\rho\left(B_{n}\right)}{F_nF_{n-1}-
F_{n+1}F_{n-2}}\\
&=\lim_{n \to
\infty}(-1)^n(F_{n}\log\rho\left(B_{n+1}\right)-F_{n+1}\log
\rho(B_n))\\
&=\lim_{n \to \infty}(-1)^n(F_{n}\log\tau_{n+1}-F_{n+1}\log
\tau_n).\end{align*}
Let us define
\[\alpha_* := e^{-S'(\phi^{-2})} =  \lim_{n \to \infty}
\left(\frac{\tau_n^{F_{n+1}}}{\tau_{n+1}^{F_n}}\right)^{(-1)^n}
\]
which yields the first of the two expressions for $\alpha_*$. We shall derive the second expression. Let us write $\alpha_n:= (\tau_n^{F_{n+1}} / \tau_{n+1}^{F_n})^{(-1)^n}$ for each $n \geq 1$ so that $\alpha_* = \lim_{n \to \infty} \alpha_n$. Applying the recurrence relations for $(F_n)$ and $(\tau_n)$ once more, we obtain for each $n \geq 1$
\begin{align*}\frac{\alpha_{n+1}}{\alpha_n}&= \frac{\left(\tau_{n+1}^{F_{n+2}}/ \tau_{n+2}^{F_{n+1}}\right)^{(-1)^{n+1}}} {\left(\tau_n^{F_{n+1}}/\tau_{n+1}^{F_n}\right)^{(-1)^n}} = \left(\frac{\tau_{n+2}^{F_{n+1}} \tau_{n+1}^{F_n}}{\tau_{n+1}^{F_{n+2}}\tau_n^{F_{n+1}}}\right)^{(-1)^n}\\ &= \left(\frac{\tau_{n+2}}{\tau_{n+1}\tau_{n}}\right)^{(-1)^n F_{n+1}} = \left(\frac{\tau_{n+1}\tau_n - \tau_{n-1}}{\tau_{n+1}\tau_{n}}\right)^{(-1)^n F_{n+1}} = \left(1-\frac{\tau_{n-1}}{\tau_{n+1}\tau_{n}}\right)^{(-1)^nF_{n+1}}.
\end{align*}
Since $\tau_1=\tau_2=2$ and $F_1 = F_2 = 1$ we have $\alpha_1=1$. Using the formula above we may now obtain for each $N\geq 2$
\[
\alpha_{N} = \alpha_1 \prod_{n=1}^{N-1}\frac{\alpha_{n+1}}{\alpha_n} = \prod_{n=1}^{N-1}\left(1-\frac{\tau_{n-1}} {\tau_{n+1}\tau_n}\right)^{(-1)^nF_{n+1}}.
\]
It follows from Lemma~\ref{taun-sequence-2} that these partial products converge unconditionally in the limit $N \to \infty$, and taking this limit we obtain the desired infinite product expression for $\alpha_*$.\qed

\begin{remark} The proof of Theorem~\ref{counter} may be extended to give an explicit estimate for the difference $|\alpha_*-\alpha_N|$ as follows. Note that for each $n \geq 3$ we have $1/3 \leq F_{n-2}/F_n \leq 1/2$ and therefore, by Proposition~\ref{Sproposition},
\begin{align*}
F_n^{-1}\log \tau_n &\geq F_n^{-1}\log \rho(B_n) =  S\left(\frac{F_{n-2}}{F_n}\right) \\ &\geq  S\left(\frac{1}{3}\right)=\frac{\log \rho(A_0^2A_1)}{3}=\frac{\log (2+\sqrt{3})}{3}.
\end{align*}
On the other hand, if we define a sequence $(\tilde\tau_n)_{n=1}^\infty$ by $\tilde\tau_1=\tilde \tau_2 =\tau_1=\tau_2=2$ and $\tilde\tau_{n+1}:=\tilde\tau_n \tilde\tau_{n-1}$ for $n \geq 3$, then it is clear that $\tau_n \leq \tilde\tau_n = 2^{F_n}$ for every $n \geq 1$. Combining these estimates yields
\begin{align*}\left|\log\alpha_N-\log\alpha_*\right| &\leq \sum_{n=N}^\infty F_{n+1} \left|\log \left(1-\frac{\tau_{n-1}}{\tau_{n+1}\tau_n}\right)\right| \leq 2\sum_{n=N}^\infty \frac{F_{n+1}\tau_{n-1}}{\tau_{n+1}\tau_n}\\
&\leq 2\sum_{n=N}^\infty F_{n+1}\frac{2^{F_{n-1}}}{(2+\sqrt{3})^{F_{n+2}/3}}< C_1 \sum_{n=N}^\infty (\phi^{n+1}+1)\theta^{\phi^{n}}\\&\leq 120\sum_{n=N}^\infty \left(\frac{3}{4}\right)^{\phi^n}< 780\left(\frac{3}{4}\right)^{\phi^N}\end{align*}
for all $N \geq 3$, where
\[C_1:=\frac{4(2+\sqrt{3})^{1/3}}{\sqrt{5}}= 2.77475\ldots,\qquad\theta:=\left(\frac{8} {(2+\sqrt{3})^{\phi^3}}\right)^{\frac{1}{3\phi \sqrt{5}}}=0.72441\ldots\]
It follows in particular that the value $\alpha_{13}:=\tau_{14}^{F_{13}}/\tau_{13}^{F_{14}}$  satisfies $|\alpha_*-\alpha_{13}|<10^{-62}$, which yields the approximation given in the introduction.
\end{remark}

\section{Further questions}
\label{sec9}

\noindent {\bf 1.} {\sl Is it true that $\alpha_*$ is irrational or transcendental?} The fast rate of convergence of the sequence $\left(\frac{\tau_n^{F_{n+1}}}{\tau_{n+1}^{F_n}}\right)^{(-1)^n}$ suggests that $\alpha_*$ is probably irrational; however, perhaps unexpectedly, this rate itself is not fast enough to claim this. Roughly, to apply known results (see, e.g., \cite{Nabut}), we need $\tau_n$ to grow like $A^{B^n}$ with $A>1$ and $B>2$. Then Theorem~1 from the aforementioned paper would apply. In our setting however we ``only'' have $B=\phi<2$.

A good illustration how tight the quoted result is is the famous Cantor infinite product
\[
\prod_{n=0}^\infty \left(1+\frac1{2^{2^n}}\right)
\]
equal to 2, despite its ``superfast'' convergence rate. However, a similar product
\[
\prod_{n=0}^\infty \left(1+\frac1{2^{3^n}}\right)
\]
is indeed irrational. We conjecture that $\alpha_{**}=\mathfrak{r}^{-1}(1-1/\sqrt2)$ (which corresponds to the substitution $0\to001,\ 1\to0$ similarly to $\alpha_*$ corresponding to the Fibonacci substitution  $0\to01,\ 1\to0$) is irrational.

\medskip\noindent
{\bf 2.}  {\sl Is $\mathfrak{r}^{-1}(\gamma)$ always a point when $\gamma$ is irrational?} We know this to be true if $\gamma$ is not Liouville (i.e., for all irrational $\gamma$ except a set of zero Hausdorff dimension) but the method used in Lemma~\ref{FIM} is somewhat limited. We hope to close this gap in a follow-up paper.

\medskip\noindent
{\bf 3.} {\sl If the answer to the previous question is yes, then is it true that $\mathfrak{r}^{-1}(\gamma)\notin\mathbb Q$ whenever $\gamma\notin\mathbb Q$?} This question is pertinent to a conjecture of Blondel and Jungers, which says that the finiteness property holds for all matrices with rational entries \cite{BJ}. Our model should not, therefore, yield a counterexample to this conjecture.

\medskip\noindent
{\bf 4.} {\sl Is $\mathfrak{r}^{-1}(\gamma)$ always an interval with nonempty interior when $\gamma$ is rational?} It was shown by the fourth named author in his thesis \cite{Theys} that $\mathfrak{r}^{-1}\left(\frac12\right)=\left[\frac45,1\right]$, and all other known examples indicate that the answer is positive. However proving this for a general $\gamma\in\mathbb Q$ seems like a difficult question.

\medskip\noindent
{\bf 5.} {\sl Does the set of all $\alpha$ such that $\mathfrak{r}(\alpha)\notin \mathbb{Q}$ have zero measure? Does it have zero Hausdorff dimension?}
Analogues of these properties are claimed for Bousch-Mairesse's example but proofs are not given \cite{BM}.

We conjecture that the graph of $\mathfrak{r}$ is a devil's staircase with the plateau regions corresponding to $\{\gamma: \mathfrak r(\gamma)\in\mathbb Q\}$ -- see Figure~\ref{fig:frakr(gamma)}.

\begin{figure}[H]
\[ \includegraphics[width=250pt,height=250pt,angle=270]{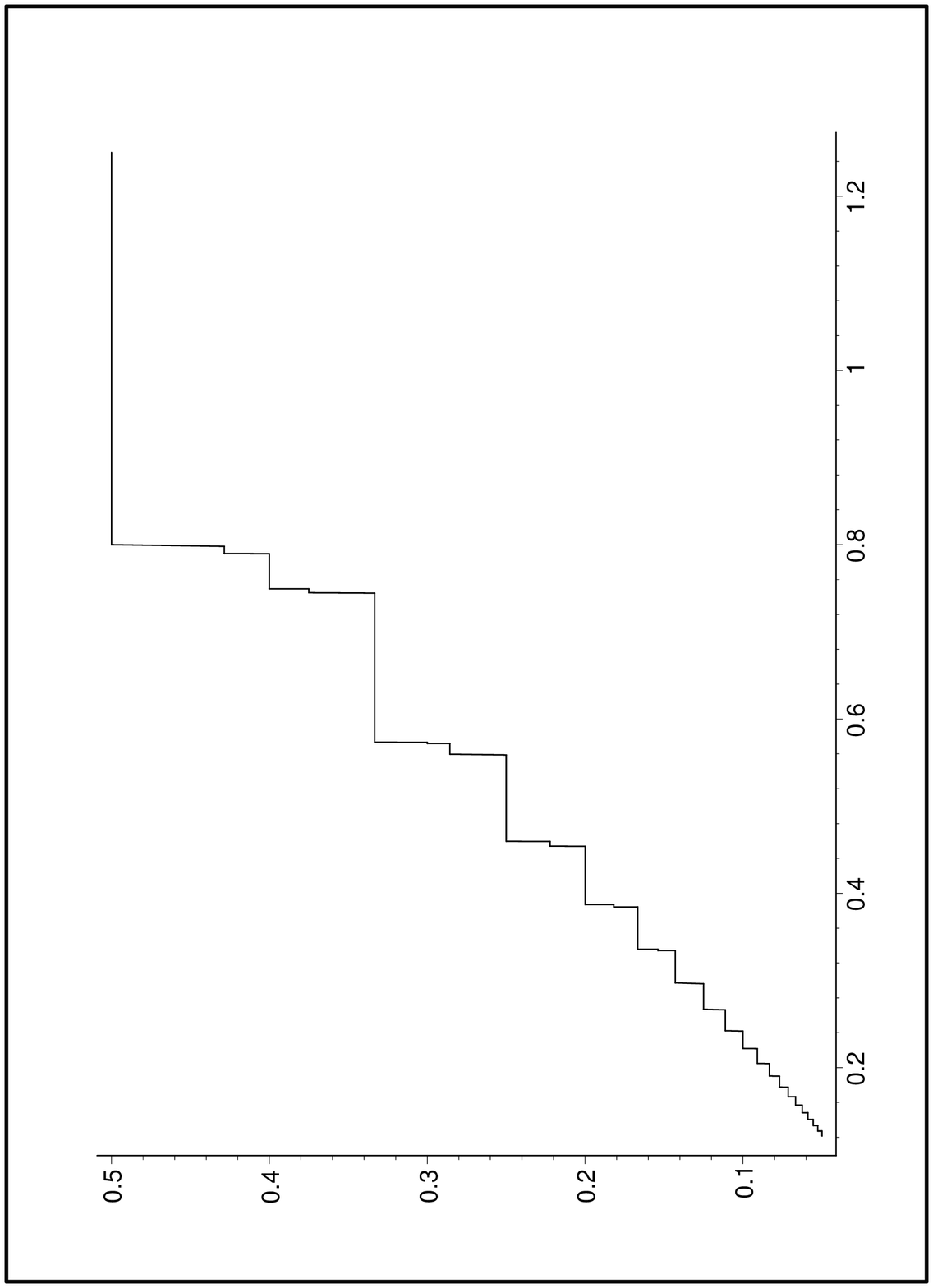} \]
\caption{Graph of $\mathfrak{r}(\gamma)$}
\label{fig:frakr(gamma)}
\end{figure}

\begin{remark}
Between the time of submisssion and present, some progress has been made on
    some of the questions above.
Interested readers are welcome to contact the authors above to find out the
    current progress on these problems.
\end{remark}

\section*{Acknowledgement}
The authors are indebted to V.~S.~Kozyakin for his helpful remarks
    and suggestions.

\begin{thebibliography}{10}

\bibitem{Ba}
{\sc N.~E. Barabanov}, {\em On the {L}yapunov exponent of discrete inclusions.
  {I}}, Avtomat. i Telemekh.,  (1988), pp.~40--46.

\bibitem{BW}
{\sc M.~A. Berger and Y.~Wang}, {\em Bounded semigroups of matrices}, Linear
  Algebra Appl., 166 (1992), pp.~21--27.

\bibitem{Ret}
{\sc J.~Berstel, A.~Lauve, C.~Reutenauer, and F.~V. Saliola}, {\em
  Combinatorics on words}, vol.~27 of CRM Monograph Series, American
  Mathematical Society, Providence, RI, 2009.
\newblock Christoffel words and repetitions in words.

\bibitem{BCJ}
{\sc V.~D. Blondel, J.~Cassaigne, and R.~Jungers}, {\em On the number of
  $\alpha$-power-free binary words of $2<\alpha\leq 7/3$}, Theoret. Comp. Sci.,
  410 (2009), pp.~2823--2833.

\bibitem{BN}
{\sc V.~D. Blondel and Y.~Nesterov}, {\em Computationally efficient
  approximations of the joint spectral radius}, SIAM J. Matrix Anal. Appl., 27
  (2005), pp.~256--272 (electronic).

\bibitem{BTV}
{\sc V.~D. Blondel, J.~Theys, and A.~A. Vladimirov}, {\em An elementary
  counterexample to the finiteness conjecture}, SIAM J. Matrix Anal. Appl., 24
  (2003), pp.~963--970 (electronic).

\bibitem{Bousch}
{\sc T.~Bousch}, {\em Le poisson n'a pas d'ar\^etes}, Ann. Inst. H. Poincar\'e
  Probab. Statist., 36 (2000), pp.~489--508.

\bibitem{BM}
{\sc T.~Bousch and J.~Mairesse}, {\em Asymptotic height optimization for
  topical {IFS}, {T}etris heaps, and the finiteness conjecture}, J. Amer. Math.
  Soc., 15 (2002), pp.~77--111 (electronic).

\bibitem{BS}
{\sc S.~Bullett and P.~Sentenac}, {\em Ordered orbits of the shift, square
  roots, and the devil's staircase}, Math. Proc. Cambridge Philos. Soc., 115
  (1994), pp.~451--481.

\bibitem{Cicone}
{\sc A.~Cicone, N.~Guglielmi, S.~Serra-Capizzano, and M.~Zennaro}, {\em
  Finiteness property of pairs of {$2\times 2$} sign-matrices via real extremal
  polytope norms}, Linear Algebra Appl., 432 (2010), pp.~796--816.

\bibitem{DHX}
{\sc X.~Dai, Y.~Huang, and M.~Xiao}, {\em Almost sure stability of
  discrete-time switched linear systems: a topological point of view}, SIAM J.
  Control Optim., 47 (2008), pp.~2137--2156.

\bibitem{DL}
{\sc I.~Daubechies and J.~C. Lagarias}, {\em Sets of matrices all infinite
  products of which converge}, Linear Algebra Appl., 161 (1992), pp.~227--263.

\bibitem{DL0}
\leavevmode\vrule height 2pt depth -1.6pt width 23pt, {\em Two-scale difference
  equations. {II}. {L}ocal regularity, infinite products of matrices and
  fractals}, SIAM J. Math. Anal., 23 (1992), pp.~1031--1079.

\bibitem{DST}
{\sc J.~M. Dumont, N.~Sidorov, and A.~Thomas}, {\em Number of representations
  related to a linear recurrent basis}, Acta Arith., 88 (1999), pp.~371--396.

\bibitem{Feck}
{\sc M.~Fekete}, {\em \"{U}ber die {V}erteilung der {W}urzeln bei gewissen
  algebraischen {G}leichungen mit ganzzahligen {K}oeffizienten}, Math. Z., 17
  (1923), pp.~228--249.

\bibitem{Frame}
{\sc J.~S. Frame}, {\em Continued fractions and matrices}, Amer. Math. Monthly,
  56 (1949), pp.~98--103.

\bibitem{F}
{\sc A.~Furman}, {\em On the multiplicative ergodic theorem for uniquely
  ergodic systems}, Ann. Inst. H. Poincar\'e Probab. Statist., 33 (1997),
  pp.~797--815.

\bibitem{GWZ}
{\sc N.~Guglielmi, F.~Wirth, and M.~Zennaro}, {\em Complex polytope extremality
  results for families of matrices}, SIAM J. Matrix Anal. Appl., 27 (2005),
  pp.~721--743 (electronic).

\bibitem{GZ}
{\sc N.~Guglielmi and M.~Zennaro}, {\em On the zero-stability of variable
  stepsize multistep methods: the spectral radius approach}, Numer. Math., 88
  (2001), pp.~445--458.

\bibitem{Gu}
{\sc L.~Gurvits}, {\em Stability of discrete linear inclusion}, Linear Algebra
  Appl., 231 (1995), pp.~47--85.

\bibitem{HW}
{\sc G.~H. Hardy and E.~M. Wright}, {\em An introduction to the theory of
  numbers}, Oxford University Press, Oxford, sixth~ed., 2008.
\newblock Revised by D. R. Heath-Brown and J. H. Silverman.

\bibitem{HS}
{\sc C.~Heil and G.~Strang}, {\em Continuity of the joint spectral radius:
  application to wavelets}, in Linear algebra for signal processing
  ({M}inneapolis, {MN}, 1992), vol.~69 of IMA Vol. Math. Appl., Springer, New
  York, 1995, pp.~51--61.

\bibitem{HO}
{\sc B.~R. Hunt and E.~Ott}, {\em Optimal periodic orbits of chaotic systems
  occur at low period}, Phys. Rev. E, 54 (1996), pp.~328--337.

\bibitem{Jenk1}
{\sc O.~Jenkinson}, {\em Frequency locking on the boundary of the barycentre
  set}, Experiment. Math., 9 (2000), pp.~309--317.

\bibitem{Jungers}
{\sc R.~Jungers}, {\em The joint spectral radius}, vol.~385 of Lecture Notes in
  Control and Information Sciences, Springer-Verlag, Berlin, 2009.
\newblock Theory and applications.

\bibitem{BJ}
{\sc R.~M. Jungers and V.~D. Blondel}, {\em On the finiteness property for
  rational matrices}, Linear Algebra Appl., 428 (2008), pp.~2283--2295.

\bibitem{KH}
{\sc A.~Katok and B.~Hasselblatt}, {\em Introduction to the modern theory of
  dynamical systems}, vol.~54 of Encyclopedia of Mathematics and its
  Applications, Cambridge University Press, Cambridge, 1995.
\newblock With a supplementary chapter by Katok and Leonardo Mendoza.

\bibitem{Koz}
{\sc V.~S. Kozyakin}, {\em Algebraic unsolvability of a problem on the absolute
  stability of desynchronized systems}, Avtomat. i Telemekh.,  (1990),
  pp.~41--47.

\bibitem{Koz3}
\leavevmode\vrule height 2pt depth -1.6pt width 23pt, {\em A dynamical systems
  construction of a counterexample to the finiteness conjecture}, in
  {P}roceedings of the 44th {IEEE} {C}onference on {D}ecision and {C}ontrol,
  and the {E}uropean {C}ontrol {C}onference 2005, Seville, Spain, December
  2005, pp.~2338--2343.

\bibitem{Koz2007}
\leavevmode\vrule height 2pt depth -1.6pt width 23pt, {\em Structure of
  extremal trajectories of discrete linear systems and the finiteness
  conjecture}, Automat. Remote Control, 68 (2007), pp.~174--209.

\bibitem{Koz5}
\leavevmode\vrule height 2pt depth -1.6pt width 23pt, {\em On the computational
  aspects of the theory of joint spectral radius}, Dokl. Akad. Nauk, 427
  (2009), pp.~160--164.

\bibitem{Koz4}
\leavevmode\vrule height 2pt depth -1.6pt width 23pt, {\em An explicit
  {L}ipschitz constant for the joint spectral radius}, {L}inear {A}lgebra
  {A}ppl., 433 (2010), pp.~12--18.

\bibitem{Kozmore}
\leavevmode\vrule height 2pt depth -1.6pt width 23pt, {\em On explicit a priori
  estimates of the joint spectral radius by the generalized {G}elfand formula},
  Differential Equations Dynam. Systems, 18 (2010), pp.~91--103.

\bibitem{LW}
{\sc J.~C. Lagarias and Y.~Wang}, {\em The finiteness conjecture for the
  generalized spectral radius of a set of matrices}, Linear Algebra Appl., 214
  (1995), pp.~17--42.

\bibitem{Lot}
{\sc M.~Lothaire}, {\em Algebraic combinatorics on words}, Encyclopedia of
  Mathematics and its Applications, Cambridge University Press, Cambridge,
  2002.

\bibitem{Mae2}
{\sc M.~Maesumi}, {\em Calculating joint spectral radius of matrices and
  {H}\"older exponent of wavelets}, in Approximation theory {IX}, {V}ol. 2
  ({N}ashville, {TN}, 1998), Innov. Appl. Math., Vanderbilt Univ. Press,
  Nashville, TN, 1998, pp.~205--212.

\bibitem{MO}
{\sc B.~E. Moision, A.~Orlitsky, and P.~H. Siegel}, {\em On codes that avoid
  specified differences}, IEEE Trans. Inform. Theory, 47 (2001), pp.~433--442.

\bibitem{QBWF}
{\sc I.~D. Morris}, {\em A rapidly-converging lower bound for the joint
  spectral radius via multiplicative ergodic theory}.
\newblock to appear in Adv. Math.

\bibitem{Nabut}
{\sc A.~V. Nabutovsky}, {\em Irrationality of limits of quickly convergent
  algebraic numbers sequences}, Proc. Amer. Math. Soc., 102 (1988),
  pp.~473--479.

\bibitem{Parr}
{\sc P.~A. Parrilo and A.~Jadbabaie}, {\em Approximation of the joint spectral
  radius of a set of matrices using sum of squares}, in Hybrid systems:
  computation and control, vol.~4416 of Lecture Notes in Comput. Sci.,
  Springer, Berlin, 2007, pp.~444--458.

\bibitem{Parth}
{\sc K.~R. Parthasarathy}, {\em Probability measures on metric spaces}, AMS
  Chelsea Publishing, Providence, RI, 2005.
\newblock Reprint of the 1967 original.

\bibitem{Prot}
{\sc V.~Y. Protasov}, {\em The joint spectral radius and invariant sets of
  linear operators}, Fundam. Prikl. Mat., 2 (1996), pp.~205--231.

\bibitem{Rock}
{\sc R.~T. Rockafellar}, {\em Convex analysis}, Princeton Landmarks in
  Mathematics, Princeton University Press, Princeton, NJ, 1997.
\newblock Reprint of the 1970 original, Princeton Paperbacks.

\bibitem{Rotacoll}
{\sc G.-C. Rota}, {\em Gian-{C}arlo {R}ota on analysis and probability},
  Contemporary Mathematicians, Birkh\"auser Boston Inc., Boston, MA, 2003.
\newblock Selected papers and commentaries, Edited by Jean Dhombres, Joseph P.
  S. Kung and Norton Starr.

\bibitem{RS}
{\sc G.-C. Rota and G.~Strang}, {\em A note on the joint spectral radius},
  Nederl. Akad. Wetensch. Proc. Ser. A 63 = Indag. Math., 22 (1960),
  pp.~379--381.

\bibitem{Sch}
{\sc S.~J. Schreiber}, {\em On growth rates of subadditive functions for
  semiflows}, J. Differential Equations, 148 (1998), pp.~334--350.

\bibitem{Theys}
{\sc J.~Theys}, {\em {J}oint {S}pectral {R}adius: theory and approximations}.
\newblock {PhD} thesis, {U}niversit\'e {C}atholique de {L}ouvain, 2005.

\bibitem{BT1}
{\sc J.~N. Tsitsiklis and V.~D. Blondel}, {\em The {L}yapunov exponent and
  joint spectral radius of pairs of matrices are hard---when not
  impossible---to compute and to approximate}, Math. Control Signals Systems,
  10 (1997), pp.~31--40.

\bibitem{Wirth}
{\sc F.~Wirth}, {\em The generalized spectral radius and extremal norms},
  Linear Algebra Appl., 342 (2002), pp.~17--40.

\end{thebibliography}

\def\cprime{$'$} \def\cprime{$'$}

\end{document}